\newif\ifproofs \newif\ifnotes \newif\ifrefchecks
\numberwithin{equation}{section}
\theoremstyle{plain}
\newcommand\@dotsep{4.5}
\def\@tocline#1#2#3#4#5#6#7{\relax
	\ifnum #1>\c@tocdepth 
	\else
	\par \addpenalty\@secpenalty\addvspace{#2}%
	\begingroup \hyphenpenalty\@M
	\@ifempty{#4}{%
		\@tempdima\csname r@tocindent\number#1\endcsname\relax
	}{%
		\@tempdima#4\relax
	}%
	\parindent\z@ \leftskip#3\relax \advance\leftskip\@tempdima\relax
	\rightskip\@pnumwidth plus1em \parfillskip-\@pnumwidth
	#5\leavevmode\hskip-\@tempdima{#6}\nobreak
	\leaders\hbox{$\m@th\mkern \@dotsep mu\hbox{.}\mkern \@dotsep mu$}\hfill
	\nobreak
	\hbox to\@pnumwidth{\@tocpagenum{#7}}\par%
	\nobreak
	\endgroup
	\fi}
\renewcommand\csname r@tocindent0\endcsname{0pt}
\def\l@subsection{\@tocline{4}{0pt}{2.5pc}{5pc}{}}
\newtheorem{theorem}{Theorem}[section]
\newtheorem{corollary}[theorem]{Corollary}
\newtheorem{lemma}[theorem]{Lemma}
\newtheorem{prop}[theorem]{Proposition}
\theoremstyle{remark}
\newtheorem{remark}[theorem]{\rm\bf Remark}
\newtheorem{remarks}[theorem]{\rm\bf Remarks}
\newtheorem{definition}[theorem]{\rm\bf Definition}
\newtheorem{example}[theorem]{\rm\bf Example}
\def\mbb{\mathbb}
\def\mf{\mathfrak}
\def\mc{\mathcal}
\newcommand{\al}{\alpha}
\newcommand{\be}{\beta}
\newcommand{\ga}{\gamma}
\newcommand{\de}{\delta}
\newcommand{\om}{\omega}
\newcommand{\ph}{\varphi}
\newcommand{\si}{\sigma}
\renewcommand{\th}{\theta}
\newcommand{\Ga}{\Gamma}
\newcommand{\Om}{\Omega}
\newcommand{\Up}{\Upsilon}
\def\Proj{\mc{P}}
\def\g{\mf{g}}
\def\p{\mf{p}}
\def\q{\mf{q}}
\def\R{\mbb{R}}
\def\Z{\mbb{Z}}
\def\P{\mbb{P}}
\def\V{\mbb{V}}
\def\K{\mbb{K}}
\def\G{\mc{G}}
\def\D{\mc{D}}
\def\k{\mc{K}}
\def\N{\mc{N}}
\let\op=\operatorname
\def\d{\op{d}\!}
\def\Ad{\op{Ad}}
\def\pmat#1{\begin{pmatrix}#1\end{pmatrix}}
\def\bmat#1{\begin{bmatrix}#1\end{bmatrix}}
\def\spmat#1{\left(\begin{smallmatrix}#1\end{smallmatrix}\right)}
\def\parder#1#2{\frac{\del#1}{\del#2}}
\let\del=\partial
\def\.{\hbox to5pt{\hss$\cdot$\hss}}
\let\un=\underline
\let\ov=\overline
\let\wh=\widehat
\let\wt=\widetilde
\let\o=\circ
\let\hr=\medskip
\def\NOTE#1{\bgroup \color{magenta} \texttt{#1} \egroup}
\let\NOTE=\relax
\ifrefchecks\usepackage{refcheck}\fi
\begin{document}
\title{Canonical curves and Kropina metrics \\ in Lagrangian contact geometry}
\author{ Tianyu Ma, Keegan J. Flood, Vladimir S. Matveev, Vojt\v{e}ch \v{Z}\'{a}dn\'{i}k}
\address{K. J. F.:Faculty of Mathematics and Computer Science\\
  UniDistance Suisse\\
  Schinerstrasse 18\\
  Brig 3900\\
  Switzerland}
  \email{keegan.flood@unidistance.ch}
  \address{T. M.:Faculty of Mathematics\\
 National Research University Higher School of Economics, Moscow\\
  119048 Moscow\\ 
  Russia\\}
  \email{tianyuzero.ma@mail.utoronto.ca}
\address{V. S. M.:Institut f\"{u}r Mathematik\\
  Friedrich-Schiller Universit\"{a}t Jena\\
  07737 Jena\\ 
  Germany\\}
  \email{vladimir.matveev@uni-jena.de}
\address{V. \v{Z}.:Department of Mathematics and Statistics\\
  Masaryk University\\
  Kotl\'{a}\v{r}sk\'{a} 2\\
  Brno 61137\\
  Czech Republic}
  \email{zadnik@math.muni.cz}

\begin{abstract} 
We present a Fefferman-type construction from Lagrangian contact to conformal structures and examine several related topics.
In particular, we describe the canonical curves and their correspondence.
We show that chains and null-chains of an integrable Lagrangian contact structure are the projections of null-geodesics of the Fefferman space.
Employing the Fermat principle, we realize chains as geodesics of Kropina (pseudo-Finsler) metrics.
Using recent rigidity results, we show that ``sufficiently many'' chains determine the Lagrangian contact structure. 
Separately, we comment on Lagrangian contact structures induced by projective structures and the special case of dimension three.
\end{abstract}

\subjclass[2010]{Primary   53A40,  32V30,  	53B40, 53A20,    58E10; Secondary     32V05, 32V20, 53C22}

\thanks{K. J. Flood and V. \v{Z}\'{a}dn\'{i}k gratefully acknowledge support from the Czech Science Foundation (GA\v{C}R) Grant 20-11473S 
and the grant 8J20DE004 of Ministry of Education, Youth and Sports of the Czech Republic.
T. Ma and V. Matveev were supported by DAAD (PPP 57509027) and DFG (MA 2565/6), and benefited from the hospitality from the Masaryk University. 
We also thank Sean N. Curry, Howard Jacobowitz, Josef \v{S}ilhan and Arman Taghavi-Chabert for useful discussions.
}

\date{\today} 

\maketitle

\tableofcontents


\section{Introduction}

An LC  (Lagrangian or Legendrian contact) structure on a smooth manifold $M$ consists of a contact distribution $\D\subset TM$ equipped with a decomposition $\D=E\oplus F$ such that both $E$ and $F$ are maximal isotropic subdistributions with respect to the Levi form on $\D$.
LC structures are of considerable interest as they are closely related to numerous classical topics in differential geometry and geometric differential equations, from projective geometry to symmetries of PDEs, see e.g. \cite{Takeuchi1994}, \cite{Hill2009}, \cite{Doubrov2020}.
Equivalently, a LC structure can be defined by an almost para-complex structure on $\D$ that is compatible with the Levi form.
The integrability of a LC structure means the integrability of the subdistributions $E$ and $F$ in the Frobenius sense, equivalently, the integrability of the almost para-complex structure in the Nijenhuis sense.
The latter definition is analogous to that of almost CR structures (more precisely, non-degenerate almost CR structures of hypersurface type).
In fact, both LC and CR structures can be seen as different real forms of a common complex structure.
CR geometry has seen continued development for decades, while LC geometry is comparatively unexplored.

In the present article, we focus on several topics related to the Fefferman construction, canonical curves and their correspondence.
We combine the abstract language of Cartan geometry with concrete coordinate expressions, algebraic model observations with analytic techniques, etc.
We now summarize what can be found in individual sections.
 
We start with a careful description of the homogeneous model. Besides the standard interpretation of the model LC structure as a flag variety of particular type, we develop another interpretation as a 
para-complex projectivization of the null-cone of para-Hermitian space, see Section \ref{sec-LC-model}.
This provides both the closest analogue to the model description of CR structures and the key instrument for many later accounts.
An affine realization of the previous picture and its potential deformations yield the notion of induced LC structure on a generic hypersurface in a para-complex space.
It is shown in Proposition \ref{prop-embed} that a LC structure can be locally realized this way if and only if it is integrable.
This contrasts the CR situation, where the problem is much more intricate and still partially open, see the discussion at the beginning of Section \ref{localembedability}. 

Concerning the classical Fefferman construction, it yields a circle bundle over a CR manifold which is equipped with a conformal class of metrics.
It allows several descriptions and has many applications, cf. \cite{Fefferman1976}, \cite{Lee1986}, \cite{Farris1986}, \cite{Cap2005}.
Our adaptation of the construction for LC structures in Section \ref{sec:Fefferman construction} is based on the general scheme in the framework of parabolic geometries as in \cite{Hammerl2017}.
The rough portrayal generalizes the model observations to the curved setting, while an explicit description of a representative metric from the conformal class is a more subtle task.
For integrable LC structures, we accomplish this by suitably calibrating a Cartan gauge, expressed in adapted local coordinates, see Theorem \ref{prop-Feff-integrable}.
The resulting formula is quite direct in the sense that it involves only the defining functions (and their partial derivatives) of the LC structure.
In particular, we do not use any compatible affine connection of Tanaka--Webster type, which are popular in the CR literature.

The most prominent canonical curves for LC structures, just as in CR geometry, are the chains.
On the one hand, they exhibit geodesic-like properties in the sense that every unparametrized chain is uniquely determined by an initial direction, provided that it is transverse to the contact distribution, cf. \cite{Cartan1933}, \cite{Chern1974}, \cite{Jacobowitz1985}.
On the other hand, chains form a more complicated system of curves as they cannot be geodesics of any affine connection.
Chains also play an important role in the rigidity of structures: 
both for CR and LC structures, the path geometry of chains determines the structure so that chain preserving diffeomorphisms are either isomorphisms or anti-isomorphisms of the structure, see \cite{Cheng1988}, \cite{vcap2009}.
Following the results of \cite{Cheng2019}, we generalize the previous conclusions for integrable LC structures so that they are still valid even if the whole family of chains shrinks to a ``sufficiently big'' subset.
Details are specified in Section \ref{sec-Kropina}, where the whole discussion culminates.
The key objects there are the Kropina metrics, which are metrics of pseudo-Finsler type, defined (off the contact distribution) via a representative metric on the Fefferman space.
The point is that chains are precisely the geodesics of any such constructed Kropina metric, see Theorem \ref{prop-Kropina}.
Besides the just mentioned applications, this realization provides an efficient tool for deriving ODE systems for chains, namely, as Euler--Lagrange equations of the corresponding functional.

The preceding discussion is based on a careful analysis of the correspondence of curves under the Fefferman projection.
The outcomes are packed in Theorem \ref{cor:integrable chains}, where chains of an integrable LC manifold are identified with projections of null-geodesics of the Fefferman conformal structure that are not perpendicular to the vertical subbundle of the projection.
Projections of null-geodesics that are perpendicular to the vertical subbundle are identified with null-chains, the contact canonical curves which also have a counterpart in CR geometry, cf. \cite{Koch}.
It is worth mentioning that, in the present article, this correspondence is the content of theorem rather than definition, which is often the case in CR references.
All canonical curves encountered in this article are defined uniformly in the framework of parabolic geometries and specified by a subset of the Lie algebra which underlies the structure in question.
This primarily determines canonical curves in the homogeneous model, the passage to the general curved setting is provided by the notion of the development of curves, see Section \ref{curves-prelim}.
This is why we devote some space to model interpretations both of chains and null-chains in Section \ref{curves-model}.
Besides their auxiliary purposes, they are also of interest on their own, cf. \cite{Bor2022}.

An important class of LC structures consists of those induced by projective structures, see \cite{Takeuchi1994}.
In higher dimensions, such structures are only half-integrable (unless flat) and have no analogy in the CR case.
We comment on some of the previously discussed topics for this class in Section \ref{LC-by-projective}.
On the way, we obtain a characterization of which LC structures come from projective structures, formulated in terms of defining functions, see Proposition \ref{LC-proj-char}.
This result generalizes Cartan's criterion in the 3-dimensional case, which we recall in Section \ref{sec-dim3}. 
In this dimension, every LC structure is automatically integrable and equivalent to a path structure on a 2-dimensional leaf space.
In this case, we can confront our general formulas with their elusive explicit companions, cf. \cite{Nurowski2003}, \cite{Bor2022}.
For the sake of illustration, we also elaborate on some ideas in more detail, see our approach to Proposition \ref{prop: Bor-Wilse}, originally proved in \cite{Bor2022}, and Example \ref{ex}.


\section{Cartan geometries and LC structures}

In this section, we collect the background for LC structures that is used throughout the article. 
LC structures can be described in several equivalent ways, which are presented in Section \ref{sec-LC-general}.
One of the approaches is in terms of (parabolic) Cartan geometries, and it is this approach which we predominantly employ.
Section \ref{sec-Cartan} is devoted to treating the necessary background concerning Cartan geometry.
Section \ref{sec-LC-general} then treats LC geometry.
There is nothing truly novel in these two subsections, basic references for this part are  \cite{Cap2009}, \cite{Sharpe1997}.
In Section \ref{sec-LC-model}, we discuss in detail several interpretations of homogeneous models for LC structures. 
Besides the common ones, we discuss a para-complex analogue of the standard model for CR structures (the CR quadrics). 
This interpretation is by no means surprising but, to our knowledge, nowhere published.
The development in later sections relies heavily on these observations.

\subsection{Cartan and parabolic geometries} \label{sec-Cartan}
Given a Lie group $G$ and a closed subgroup $P\subset G$, let $\p\subset\g$ be the corresponding pair of Lie algebras.
The homogeneous model for the Cartan geometry of type $(G,P)$ consists of the homogeneous space $G/P$, the $P$-principal bundle $G\to G/P$ and the Maurer--Cartan form $\om:TG\to\g$.
This data encodes the geometric structure on $G/P$ in Klein's sense.
In addition, the Maurer--Cartan form defines the absolute parallelism on $G$, reproduces the infinitesimal generators of the principal $P$-action on $G$, and is $P$-equivariant.

Abstracting this picture leads to the notion of general Cartan geometry of type $(G,P)$:
it consists of a base manifold $M$, a $P$-principal bundle $\G\to M$ and a \emph{Cartan connection}, which is a $\g$-valued 1-form $\om:T\G\to\g$ satisfying the following three properties,
\begin{itemize}
\item $\om_z:T_z\G\to\g$ is a linear isomorphism, for each $z\in\G$,
\item $\om\left(\frac{d}{dt}\big\vert_{0}\, r_{\exp(tX)}(z)\right) =X$, for each $X\in\p$ and $z\in\G$,
\item $(r_p)^*\om =\Ad_{p^{-1}}\o\,\om$, for each $p\in P$,
\end{itemize}
where $r_p:\G\to\G$ and $\Ad_p:\g\to\g$ denote the right multiplication action and the adjoint action, respectively, of an element $p\in P$.
The composition of $\om:T\G\to\g$ with the quotient projection $\g\to\g/\p$ provides an identification of the tangent bundle $TM$ with the associated bundle with the standard fiber $\g/\p$
(whose $P$-module structure is induced by the adjoint action),
\begin{align}
TM\cong\G\times_{P} \g/\p .
\label{eq-TM}
\end{align}
A morphism of two Cartan geometries of the same type is a morphism of the corresponding principal bundles that preserves the Cartan connections.

The \emph{curvature} of the Cartan connection $\om$ is the $\g$-valued 2-form on $\G$ defined by 
\begin{align}
\Om (u,v):= \d\omega (u,v) +[\omega(u),\omega(v)],
\label{eq-curvature}
\end{align}
for vector fields $u$ and $v$ on $\G$, where $[\ ,\ ]$ denotes the Lie bracket in $\g$.
The curvature vanishes identically if and only if the Cartan geometry is locally isomorphic to the homogeneous model.
The curvature $\Om$ is strictly horizontal, i.e. it vanishes under insertion of any vertical vector.
This means that the curvature is represented by a $P$-equivariant map $\G\to\wedge^2(\g/\p)^*\otimes\g$, the so-called \emph{curvature function}.
Further composition with the projection $\g\to\g/\p$ yields a map $\G\to\wedge^2(\g/\p)^*\otimes(\g/\p)$ representing a tensor field of type $\wedge^2 T^*M \otimes TM$, the \textit{torsion} of the Cartan connection $\om$.

As in the model case, the Cartan-geometric view provides a new perspective and tools for studying the underlying geometric structure.
The relationship between the underlying structure and the corresponding Cartan-geometric data can often be made bijective by imposing certain conditions on the Cartan curvature.
In particular, for a large family of parabolic geometries, this is guaranteed by the notions of regularity and normality.
Most of the structures discussed below belong to this family and in all these cases the regularity condition is forced by the normality.
The key concepts can be introduced as follows.

A \emph{parabolic geometry} is a Cartan geometry of type $(G,P)$ where $G$ is a semisimple Lie group and $P\subset G$ is a parabolic subgroup.
Parabolic subalgebras $\p\subset\g$ are related to gradings of semisimple Lie algebras as follows.
Let
\begin{align}
\label{eqn:parabolic grading alg}
\g=\g_{-k}\oplus\dots\oplus\g_0\oplus\dots\oplus\g_k
\end{align}
be a grading of depth $k$ of a semisimple Lie algebra $\g$, 
i.e. $[\g_i,\g_j]\subseteq\g_{i+j}$, where $\g_l=0$, for $|l|>k$. 
If the subalgebra $\p_+:=\g_1\oplus\dots\oplus\g_k$ is generated by $\g_1$
or, equivalently, $\g_-:=\g_{-k}\oplus\dots\oplus\g_{-1}$ is generated by $\g_{-1}$,
then $\p:=\g_{0}\oplus\p_{+}$ is a parabolic subalgebra with nilradical $\p_+$. 
The obvious $P$-invariant filtration of $\g$ induces a filtration of the tangent bundle $TM$, yielding an associated graded module $\op{gr}(TM)$. 
The parabolic geometry is called \emph{regular} if the algebraic bracket on $\op{gr}(TM)$ induced by the Lie bracket in $\g$ agrees with the Levi bracket on $\op{gr}(TM)$ induced by the commutator of vector fields on $M$.

The Killing form on $\g$ provides an isomorphism of $P$-modules $(\g/\p)^*\cong\p_+$, hence the curvature function of a parabolic geometry is seen as a function $\G\to\wedge^2\p_+\otimes\g$.
The natural normalization condition is provided by the \emph{Kostant codifferential}, which is the codifferential in the complex 
\begin{equation*}
\begin{tikzcd}
  \cdots \ar[r,"\partial^*"] & \wedge^2\p_+\otimes\g \ar[r,"\partial^*"] & \p_+\otimes\g \ar[r,"\partial^*"] & \g \ar[r] & 0
\end{tikzcd}
\end{equation*}
determining the Lie algebra cohomology of $\p_+$ with coefficients in $\g$.
The parabolic geometry is called \emph{normal} if its curvature function has values in $\ker\del^*$.
In such cases, the composition of the curvature function with the quotient projection onto the cohomology space $\ker\del^*\to\ker\del^*/\op{im}\del^*$ yields the so-called \textit{harmonic curvature}.
For regular and normal parabolic geometries, this is a fundamental curvature object which determines the entire curvature.
In particular, its vanishing is equivalent to the local flatness of the parabolic geometry.
The harmonic curvature typically consists of only a few components that can be interpreted in underlying geometric terms. 

Instead of the Cartan connection, i.e. a globally defined 1-form $\om:T\G\to\g$, one may consider the \emph{Cartan gauges}, i.e. local pull-backs $\phi^*\om:TU\to\g$, where $U\subset M$ and $\phi:U\to\G$ is a section of the bundle projection $\G\to M$.
Under a change of section $\wh\phi=r_p\o\phi$, for $p:U\to P$, the corresponding gauge is expressed as
\begin{align}
\wh\phi^*\om = \Ad_{p^{-1}}(\phi^*\om) +p^*\om_P , 
\label{eq-gauge-change}
\end{align}
where $\om_P:TP\to\p$ denotes the Maurer--Cartan form on $P$.
In this way, the Cartan geometry is treated via an atlas of Cartan gauges with the equivalence relation given by \eqref{eq-gauge-change} on intersections of domains.
Choosing a basis of $\g$, any gauge is represented by a cluster of ordinary 1-forms on the base manifold which allow a very straightforward analysis.
The Cartan gauge approach is repeatedly used in the article.

\subsection{LC structures} \label{sec-LC-general}
Let $M$ be a smooth manifold of odd dimension with a contact distribution $\D\subset TM$.
This means that $\D$ has codimension one and it is maximally non-integrable, equivalently, the Levi form $\mc{L}:\wedge^2\D\to TM/\D$, given by the Lie bracket of vector fields as $\mc{L}(X,Y)=[X,Y]\!\mod\D$, is non-degenerate.

\begin{definition}
An \emph{LC structure} on a manifold $M$ of dimension $2n+1$ consists of a contact distribution $\D\subset TM$ equipped with a decomposition $\D=E\oplus F$ into transversal subdistributions of rank $n$ which are both isotropic with respect to the Levi form.
If either $E$ or $F$ is an integrable distribution then the LC structure is called \emph{half-integrable}, if both $E$ and $F$ are integrable then the structure is called \emph{integrable}.
\end{definition}

To get closer to the CR situation, a LC structure can be equivalently described by an almost para-complex structure $K$ on the distribution $\D$ such that the Levi form is of type $(1,1)$ with respect to $K$.
This means that $K$ is an endomorphism of $\D$ such that $K\o K=\op{id}$ and $\mathcal{L}(KX,KY)=-\mathcal{L}(X,Y)$, for all $X,Y\in\D$.
The $\pm 1$-eigenspaces of $K$ then define the transversal maximal isotropic subdistributions $E$ and $F$.
The integrability of the LC structure is equivalent to the vanishing of the Nijenhuis tensor of $K$.
The map $\Phi:\odot^2\D\to TM/\D$, defined by 
$\Phi(X,Y):=\mathcal{L}(KX,Y) =-\mathcal{L}(X,KY)$, is clearly symmetric and non-degenerate.
The null-vectors of $\Phi$ form a cone in $\D$ containing $E$ and $F$; they are called the \emph{null-vectors} of the LC structure.

A morphism of LC structures is a morphism of contact manifolds that preserves the Legendrian decompositions, or equivalently, the partial para-complex structures.

Locally, the contact distribution $\D\subset TM$ is the kernel of a contact form, i.e. a 1-form $\si$ such that the top-form $\si\wedge(\d\si)^n$ is nowhere vanishing.
There is a 1-parameter freedom in the choice of contact form, so they play a role of scales.
In this article, we only deal with (at least) half-integrable LC structures.
Let us keep the convention that the subdistribution $F\subset\D$ is always integrable.
By the Darboux and Frobenius Theorems,  the local coordinates $(x^i,u,p_j)$ on $M$ can always be chosen so that $\si=\d u -p_i \d x^i$ and the leaves of $F$ correspond to $(x^i)$ and $u$ constant.
Here and below, the indices $i,j$ etc., run from 1 to $n$ and repeated indices indicate the sum 
(i.e. the summation symbol is omitted if there is no danger of confusion).
In such coordinates, the LC structure is fully determined by a collection of functions $f_{ij}=f_{ij}(x^k,u,p_l)$ so that $f_{ij}=f_{ji}$ and 
\begin{align}
E = \left\< \parder{}{x^i} +p_i\parder{}{u} +f_{ij}\parder{}{p_j} \right\>, \quad
F = \left\< \parder{}{p_i} \right\> .
\label{eq-frame}
\end{align} 
Dually, the LC structure is described by the coframe
\begin{align}
\si=\d u -p_i \d x^i, \quad
\th^i=\d x^i, \quad
\pi_i=\d p_i -f_{ij}\d x^j ,
\label{eq-coframe}
\end{align}
so that $E=\ker\<\si,\pi_i\>$ and $F=\ker\<\si,\th^i\>$.
Note that the forms are related by  $\d\si=\th^i\wedge\pi_i$.
A coframe satisfying the previous three conditions is called \emph{adapted}.
There is a 1-parameter family of adapted coframes corresponding to the choice of contact form, namely, 
\begin{align}
\wh\si=e^{2f}\si, \quad
\wh\th^i=e^f(\th^i-2f^i\si), \quad
\wh\pi_i=e^f(\pi_i+2f_i\si),
\label{eq-coframe-change}
\end{align}
where $f$ is an arbitrary function on $M$ and the functions $f^i$ and $f_i$ are given by the (unique) decomposition $\d f=f^i\pi_i+f_i\th^i+f_0\si$.
In particular, there is a conformal class of symplectic forms $\d\si=\th^i\wedge\pi_i$, or split-signature metrics $\th^i\odot\pi_i$, on $\D$.

\begin{remarks}
To a half-integrable LC structure described by the defining functions $f_{ij}$ as in \eqref{eq-frame} or \eqref{eq-coframe}, one may associate the following system of 2nd-order PDEs
\begin{align}
\label{eqn: LC PDE}
\frac{\partial^2 u}{\partial{x^i}\partial{x^j}}= f_{ij} \left(x^k,u,\frac{\partial u}{\partial x^k} \right) ,
\end{align}
for the unknown $u=u(x^i)$.
It follows that the compatibility of this system is equivalent to the integrability of the LC structure, cf. \cite[Lemma 2.3]{Doubrov2020}.
In such case, the leaves of $E$ correspond to graphs of solutions in the $(x^i,u)$-space. 

In the 3-dimensional case, i.e. for $n=1$, the LC structure is automatically integrable and the previous system reduces to a single second-order ODE.
This encodes the so-called \emph{path geometry} in the underlying 2-dimensional space.
More details on this specific situation are located in Section \ref{sec-dim3}.
\end{remarks}

For our purposes, the key fact is that LC structures are the underlying structures of parabolic geometries corresponding to the  contact grading of simple Lie algebras of the $\mf{sl}$-type.
To be more precise, for $\g=\mf{sl}(n+2,\R)$,  the contact grading can be described by the following block-matrix decomposition
\begin{align}
\label{eq-LC-g}
\g=\begin{pmatrix}
\g_0 & \g_1^E & \g_2\\
\g_{-1}^E & \g_0 & \g_1^F \\
\g_{-2} & \g_{-1}^F & \g_0
\end{pmatrix} ,
\end{align}
where the blocks have sizes $1$, $n$ and $1$ along the diagonal.
As the group $G$ with the Lie algebra $\g$ we take $G=SL(n+2,\R)$.
The subgroup $P\subset G$ with the Lie algebra $\p\subset\g$ is the one consisting of block upper triangular matrices according to the previous schematic description.
The homogeneous space $G/P$ is studied in detail in Section \ref{sec-LC-model}.
The whole tangent space corresponds to the $P$-module $\g/\p$, cf. \eqref{eq-TM}, and has dimension $2n+1$.
Under the linear isomorphism $\g/\p\cong\g_-$, the $P$-action on $\g/\p$ is truncated to $\g_-$.
In this manner, the contact distribution corresponds to $\g_{-1}\subset\g_-$, the Legendrian decomposition to $\g_{-1}=\g_{-1}^E\oplus\g_{-1}^F$ and the Levi form to the bracket $\wedge^2\g_{-1}\to\g_{-2}$.
The contact null-vectors correspond to the elements 
$\spmat{ 0 & 0 & 0 \\ X & 0 & 0 \\ 0 & Y^t & 0 } \in \g_{-1}$ such that $Y^t X=0$.

Normal parabolic geometries of the current type are always regular.
The harmonic curvature decomposes as follows.
In the 3-dimensional case, i.e. for $n=1$, there are two components, both of homogeneity 4 which, in particular, means the geometry is torsion-free.
In the case of higher dimension, there are two components of homogeneity 1 and one component of homogeneity 2, which correspond to two torsions and one curvature of Weyl type, respectively.
Vanishing of either torsion component is equivalent to the integrability of the corresponding subdistribution.

The moral of the present preparation can be summarized as follows.

\begin{prop}[{\cite[Section 4.2.3]{Cap2009}}] 
For the pair $P\subset G$ as above, the category of normal parabolic geometries of type $(G,P)$ is equivalent to the category of LC structures.
Under this correspondence, (half-)torsion-free Cartan connections  correspond to (half-)integrable LC structures.
\end{prop}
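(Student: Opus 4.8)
\textit{Proof proposal.}
The plan is to deduce this from the general equivalence between regular normal parabolic geometries and regular infinitesimal flag structures, cf.\ \cite[Theorem~3.1.14]{Cap2009}, by checking that for the pair $P\subset G$ at hand the infinitesimal flag structure is exactly an LC structure, and then reading off the torsion statement from the description of the harmonic curvature recalled before the proposition. No new construction is needed; the work is in matching the two languages.

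First I would set up the algebra. The grading \eqref{eq-LC-g} of $\g=\mf{sl}(n+2,\R)$ is a contact grading: $\g_{-2}$ is one-dimensional, the bracket $\wedge^2\g_{-1}\to\g_{-2}$ is (a multiple of) the standard symplectic form, and $\g_{-1}=\g_{-1}^E\oplus\g_{-1}^F$ is a splitting into transverse Lagrangian subspaces, on which the reductive Levi factor $G_0=P/\exp(\p_+)$ acts (through $GL(n,\R)$ on $\g_{-1}^E$, its dual on $\g_{-1}^F$, and the induced scaling on $\g_{-2}$). Consequently, on a manifold $M$ a regular infinitesimal flag structure of this type consists of a contact distribution $\D\subset TM$ --- regularity being precisely the condition that the Levi bracket $\wedge^2\D\to TM/\D$ agrees with the algebraic one, i.e.\ that $\D$ is genuinely contact --- together with a reduction of the associated graded frame bundle to $G_0$. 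Since $G_0$ is, up to a finite group, the stabiliser inside the conformal symplectic group of an \emph{ordered transverse pair of Lagrangian subspaces} of $\g_{-1}$, such a reduction carries no information beyond the choice, fibrewise and smoothly, of an ordered transverse Lagrangian pair in $\D$ --- that is, a Legendrian decomposition $\D=E\oplus F$. Thus a regular infinitesimal flag structure of type $(G,P)$ \emph{is} an LC structure, and this identification is manifestly functorial: a bundle map of graded frame bundles respecting the $G_0$-reductions is a contact diffeomorphism preserving $E\oplus F$, i.e.\ a morphism of LC structures, and conversely.

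Combining this with \cite[Theorem~3.1.14]{Cap2009} gives the equivalence of categories, the normalisation by $\ker\partial^*$ making the Cartan connection unique up to isomorphism; the qualifier ``regular'' on the parabolic-geometry side is vacuous here because, as recalled before the statement, normality forces regularity for the contact $\mf{sl}$-grading. For the torsion claim I would use the harmonic-curvature decomposition, itself a Kostant computation of $H^1(\g_-,\g)=H^1(\p_+,\g)$: for $n\ge 2$ the two components of homogeneity $1$ take values in $\g_-$ and hence are of torsion type, while the homogeneity-$2$ component is of Weyl-curvature type; for $n=1$ both components have homogeneity $4$ and are of curvature type, so the geometry is automatically torsion-free. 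A regular normal parabolic geometry is torsion-free (respectively, has vanishing $\g_{-1}^F$- or $\g_{-1}^E$-part of the torsion) exactly when the corresponding torsion-type harmonic components vanish; and, as stated before the proposition, one of these homogeneity-$1$ components is the Frobenius obstruction of $E$ and the other that of $F$. Hence torsion-free $\iff$ integrable and half-torsion-free $\iff$ half-integrable.

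The main obstacle is precisely the Kostant cohomology computation that produces the harmonic-curvature list and pins down the geometric meaning of the two homogeneity-$1$ torsions as the integrability obstructions of $E$ and of $F$; once that and the standing fact ``normal $\Rightarrow$ regular'' are granted --- and both are recorded in the text preceding the statement --- what remains is the elementary observation that a $G_0$-reduction of the graded frame bundle of a contact manifold is the same datum as a Legendrian decomposition, plus an invocation of the general parabolic machinery.
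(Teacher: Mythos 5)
The paper offers no proof of this proposition --- it is quoted verbatim from \cite[Section 4.2.3]{Cap2009} --- and your argument is precisely the standard one from that reference: the equivalence of regular normal parabolic geometries with underlying infinitesimal flag structures, the identification of a $G_0$-reduction of the graded frame bundle of a contact distribution with a Legendrian decomposition, and the Kostant-cohomology identification of the two homogeneity-one harmonic torsions with the Frobenius obstructions of $E$ and $F$. Your reconstruction is correct (including the ``up to a finite group'' caveat, which is needed since for $n$ even the center of $G$ meets $G_0$), so there is nothing to add beyond noting that the heavy steps you defer to --- Theorem 3.1.14 and the statement that vanishing of the harmonic torsion forces vanishing of the full torsion --- are exactly the inputs the cited section supplies.
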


\subsection{The homogeneous model in detail} \label{sec-LC-model}

The homogeneous model for the LC geometry is given by $G/P$, where $G=SL(n+2,\R)$ and $P\subset G$ is the block upper-triangular subgroup with blocks of sizes 1, $n$ and 1 as above.
The common interpretation of $G/P$ is given as follows.
We denote the standard and the dual bases of $\R^{n+2}$ and $\R^{(n+2)*}$ by $(e_0,e_i,e_{n+1})$ and $(e^0,e^i,e^{n+1})$, respectively, where  $i=1,\dots,n$.
The standard action of $G$ on $\R^{n+2}$ yields a transitive action of $G$ on any flag variety of subspaces in $\R^{n+2}$.
The subgroup $P$ stabilizes the flag $\<e_0\> \subset \<e_0,e_1,\dots,e_n\>$. Thus $G/P$ is identified with the flag variety of type $(1,n+1)$ in $\R^{n+2}$.

Another interpretation of the model is given by projectivizing. Then previous flag variety corresponds to the set of incident pairs of points and hyperplanes in real projective space $\R\P^{n+1}$.
Noting that hyperplanes are equivalent to kernels of 1-forms, up to a scalar multiple, further identifies $G/P$ with the projectivized cotangent bundle of real projective space $\R\P^{n+1}$.
In this context, the subgroup $P\subset G$ can be realized as $P=\un{P}\cap\ov{P}$, where $\un{P}$ is the stabilizer of $\<e_0\>$ and $\ov{P}$ is the stabilizer of $\<e_0,e_1,\dots,e_n\>$.
Since $\<e_0,e_1,\dots,e_n\>=\ker(e^{n+1})$, $\ov{P}$ is the stabilizer of $\<e^{n+1}\>$ under the dual action of $G$ on $\R^{(n+2)*}$.
This gives the standard double fibration:
\begin{align}
\begin{split}
\xymatrix@R=.7\baselineskip{
& G/P \ar[dl] \ar[dr] & \\
G/\un{P}\cong\R\P^{n+1} & & G/\ov{P}\cong\R\P^{(n+1)*} }
\end{split}
\label{dia-double-fiber}
\end{align}

The previous picture can also be drawn on the para-complex canvas so that the resulting interpretation most closely resembles the standard CR model.
Since para-complex numbers do not form a field, one has to be careful with some basic notions.
To start with, we introduce the necessary background, partly reacting to and extending that of \cite[Section 3.2]{Hammerl2017}.

By a \emph{para-complex vector space} of dimension $n+2$ we mean a real vector space of dimension $2n+4$ equipped with a para-complex structure $\K$, i.e. an endomorphism of $\V$ such that  $\K\o\K=\op{id}$ and the corresponding $\pm1$-eigenspaces, denoted by $\V_\pm$, have the same dimension.
The complement of these subspaces is denoted by $\V_0:=\V\setminus(\V_+\cup \V_-)$.
A \emph{para-complex subspace} of dimension $d$ is a real subspace of $\V$ of dimension $2d$ such that its intersection with both $\V_+$ and $\V_-$ has real dimension $d$.
For $d=1$ and 2, we speak about para-complex lines and planes, respectively. 
Note that any para-complex subspace is $\K$-invariant, but the opposite is not true in general.
For example, the para-complex hull $\<v,\K(v)\>$ of an element $v\in\V$ is a para-complex line if and only if $v\in\V_0$.

A \emph{para-Hermitian vector space} is a para-complex vector space equipped with an inner product, denoted by $\.$, with respect to which $\K$ is skew.
This compatibility means that the $\pm1$-eigenspaces $\V_\pm$ are both null and are dual one another.
In particular, the inner product must have the split signature.
A choice of basis of $\V_+$, together with the dual basis of $\V_-\cong\V_+^*$, provides an identification of $\V$ with $(n+2)$-ary Cartesian power of para-complex numbers with their standard norm; this is denoted by $\R^{n+2,n+2}$.

Let a para-complex line $L\subset\V$ be represented as $\<v_+,v_-\>$, where $v_\pm\in\V_\pm$.
Under the identification $\V_-\cong\V_+^*$, the intersection $L\cap\V_-=\<v_-\>$ determines a hyperplane $\ker v_-\subset \V_+$ which contains $v_+\subset\V_+$, if and only if $v_-\.v_+=0$.
Thus, the pair $(\<v_+\>,\<v_-\>)$ represents a flag in $\V_+\cong\R^{n+2}$ if and only if the para-complex line $\<v_+,v_-\>$ is null.
Altogether, the flag variety $G/P$ is identified with the set of para-complex null-lines in $\V$.
Forgetting about the para-complex structure $\K$, the previous is identified the set of real null-planes in $\V$ having a non-trivial intersection with both $\V_+$ and $\V_-$.

The principal group $G$ is naturally seen as a subgroup in the special orthogonal group of the pseudo-euclidean space $\V$.
This embedding, denoted by 
\begin{align}
\eta: G=SL(n+2,\R) \hookrightarrow SO(n+2,n+2)=\wt{G} ,
\label{eq-eta}
\end{align}
is given by the standard and the dual action of $G$ on $\V_+\cong\R^{n+2}$ and $\V_-\cong\R^{(n+2)*}$, respectively. 
In this way, we may think of $G$ as a special ``para-unitary group''.
Adapting the notation from above, the subgroup $P\subset G$ is seen as the stabilizer of the para-complex null-line $\<e_0,e^{n+1}\>=:O$.

The preceding discussion is summarized in the following proposition:

\begin{prop}
\label{prop-LC-model}
Let $M=G/P$ be the homogeneous model for a LC structure of dimension $2n+1$.
Then $M$ is naturally identified with each of the following:
\begin{enumerate}[(a)]
\item \label{LC-model-a}
The flag variety of type $(1,n+1)$ in the real vector space $\R^{n+2}$.
\item  \label{LC-model-b}
The set of incident pairs of points and hyperplanes in the real projective space $\R\P^{n+1}$.
\item  \label{LC-model-c}
The projectivized cotangent bundle of the real projective space $\R\P^{n+1}$.
\item \label{LC-model-d}
The set of real null-planes in the pseudo-euclidean space $\R^{n+2,n+2}$ having a non-trivial intersection with both $\R^{n+2}\times\{0\}$ and $\{0\}\times\R^{n+2}$.
\item \label{LC-model-e}
The set of para-complex null-lines in the para-Hermitian space $\R^{n+2,n+2}$.
\end{enumerate}
\end{prop}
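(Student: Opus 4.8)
The plan is to realize all five objects as quotients $G/P$ by exhibiting a chain of natural, $G$-equivariant bijections
\[
\text{(c)} \longleftrightarrow \text{(b)} \longleftrightarrow \text{(a)} \longleftrightarrow \text{(e)} \longleftrightarrow \text{(d)} ,
\]
and, at each link, by tracking the isotropy subgroup of a distinguished point. Since all the actions involved are transitive, an equivariant bijection between two of these objects forces the corresponding isotropy subgroups to coincide and is then a diffeomorphism; so it suffices to check that the isotropy of a base point is always $P$ (or $\eta(P)$ after passing through \eqref{eq-eta}). I would fix the standard flag $\langle e_0\rangle \subset \langle e_0,e_1,\dots,e_n\rangle$ in (a) as the base point and follow its avatars through the two halves of the chain.

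\emph{Projective side.} As $G=SL(n+2,\R)$ acts transitively on every flag variety of $\R^{n+2}$, the orbit map through the standard flag of type $(1,n+1)$ identifies (a) with $G/P$, the group $P$ being by construction the stabilizer of that flag, i.e.\ the block upper-triangular subgroup; this is the observation opening Section \ref{sec-LC-model}. Projectivization sends a line to a point of $\R\P^{n+1}$ and an $(n+1)$-plane to a hyperplane, and the inclusion of the two flag members becomes the incidence of a point lying on a hyperplane, giving (a) $\leftrightarrow$ (b). For (b) $\leftrightarrow$ (c), a hyperplane in $\R\P^{n+1}$ is the projectivization of $\ker\xi$ for a linear form $\xi$ unique up to scale, so a point of $\R\P^{(n+2)*}$; and a hyperplane through a point $x\in\R\P^{n+1}$ is the same data as a line in $T^*_x\R\P^{n+1}$, since a codimension-one subspace of $T_x\R\P^{n+1}$ determines a unique projective hyperplane tangent to it at $x$. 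This matches incident pairs with $\P T^*\R\P^{n+1}$, and one checks the $G$-actions agree. Along this half, the stabilizer of the image of the base point is always the subgroup fixing both $\langle e_0\rangle$ and $\ker(e^{n+1}) = \langle e_0,\dots,e_n\rangle$; since $\ker(e^{n+1})$ is fixed if and only if $\langle e^{n+1}\rangle$ is fixed under the dual action, this subgroup is exactly $\un{P}\cap\ov{P}=P$.

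\emph{Para-Hermitian side.} Equip $\V=\R^{n+2}\oplus\R^{(n+2)*}$ with the involution $\K$ equal to $+\op{id}$ on $\V_+=\R^{n+2}$ and $-\op{id}$ on $\V_-=\R^{(n+2)*}$, and with the inner product induced by the canonical pairing; then $\V_\pm$ are null and mutually dual, $\K$ is skew, and $\V\cong\R^{n+2,n+2}$ as a para-Hermitian space. For nonzero $v_\pm\in\V_\pm$ the $\K$-invariant plane $L=\langle v_+,v_-\rangle$ is totally null exactly when $v_-\.v_+=0$, because $v_+\.v_+=v_-\.v_-=0$ automatically; and $v_-\.v_+=0$ says precisely that $\langle v_+\rangle$ lies on the hyperplane $\ker v_-$, which is the bijection (e) $\leftrightarrow$ (b). Forgetting $\K$, a para-complex null-line is a totally null real $2$-plane meeting $\V_+$ and $\V_-$ nontrivially; conversely, such a plane $\Pi$ cannot lie inside $\V_+$ or $\V_-$, so it is spanned by some $0\ne v_+\in\Pi\cap\V_+$ and $0\ne v_-\in\Pi\cap\V_-$, and nullity forces $v_+\.v_-=0$, making $\Pi=\langle v_+,v_-\rangle$ a para-complex null-line --- this is (d) $\leftrightarrow$ (e). Finally, the embedding $\eta$ of \eqref{eq-eta} acts on $\V_+$ by $g$ and on $\V_-$ by $(g^{-1})^t$, hence commutes with $\K$ and preserves the pairing, so $\eta(G)\subset\wt{G}$ permutes the para-complex null-lines; transitivity transfers from the flag picture through the bijections already built, and $\eta(P)$ is exactly the stabilizer of $O=\langle e_0,e^{n+1}\rangle$, since $\eta(P)$ fixes both $\langle e_0\rangle$ and $\langle e^{n+1}\rangle$ while, conversely, any $g\in G$ fixing $O$ fixes its two $\K$-eigencomponents.

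\emph{Expected obstacle.} Essentially all of this is routine linear algebra; the one place demanding care is the bookkeeping on the para-complex side, namely not confusing a mere $\K$-invariant real $2$-plane with a para-complex line. As recorded just before the statement, $\langle v,\K v\rangle$ is a para-complex line if and only if $v\in\V_0$, i.e.\ iff both $\V_\pm$-components of $v$ are nonzero; this is exactly why item (d) must insist that the null $2$-plane meet both factors nontrivially, and it is what makes the correspondences above well defined in both directions. One must likewise keep in mind that ``null-line'' refers to the whole line being isotropic (equivalently $v_+\.v_-=0$, equivalently the restricted inner product vanishes): a para-complex line on which the pairing is nondegenerate is of the ``non-null'' type and does not occur in $G/P$.
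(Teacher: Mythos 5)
Your proposal is correct and follows essentially the same route as the paper: the paper gives no separate proof but lets the proposition summarize the discussion of Section \ref{sec-LC-model}, which consists of exactly the chain of identifications you describe — the orbit map through the standard flag, projectivization and dualization for (a)--(c), and the criterion $v_-\.v_+=0$ linking flags to para-complex null-lines for (d)--(e). Your extra bookkeeping of isotropy subgroups and the (d) $\leftrightarrow$ (e) decomposition $\Pi=(\Pi\cap\V_+)\oplus(\Pi\cap\V_-)$ only makes explicit what the paper leaves implicit.
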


\begin{remark}
\label{rmk:model 2.5}
Denoting the null-cone of non-zero null-vectors in $\V$ by $\N$, the canonical projection $\mathcal{N}\rightarrow G/P$ is surjective when restricted to the open subset $\mc N_0:=\mc N\setminus(\V_+\cup \V_-)$. 
In other words, the interpretation in \eqref{LC-model-e} can be described as the \emph{para-complex projectivization}  of $\N_0$.
Thus we realize $M=G/P$ as a hyperquadric in the \emph{para-complex projective space}, the space of para-complex lines in $\V$.
In this framework, the double fibration \eqref{dia-double-fiber} is completed by the real projectivizations of $\V_+$ and $\V_-$.

For an affine para-complex hyperplane in $\R^{n+2,n+2}$, its intersection with $\N_0$ provides another, but not global, interpretation of $M$ as 
a hyperquadric in para-complex space $\R^{n+1,n+1}$.
It follows that deforming this view, i.e. considering general hypersurfaces in $\R^{n+1,n+1}$, yields a (local) realization of any LC structure provided that it is integrable.
See Section \ref{localembedability} for details.
\end{remark}

For later use, we add more details on the tangent bundle of the model LC manifold.
On the one hand, $M$ is the homogeneous space $G/P$, hence the tangent space $T_LM$ is identified with $\g/\p$, for any $L\in M$, via the Maurer--Cartan form, cf.  \eqref{eq-TM}.
On the other hand, $M$ is a submanifold in the Grassmannian of vector subspaces (of real-dimension 2) in $\V$. 
It is well-known that the tangent space in $L$ of the Grassmannian is identified with $L^* \otimes \V/L$, hence we have $T_L M \subset L^* \otimes \V/L$, for any $L\in\N_0$.
These two perspectives will be freely combined later, so we should understand them better.
To specify the latter one, we need the natural projection $q:\V/L\to \V/L^\perp$, induced by the inclusion $L\subset L^\perp$, and the natural isomorphism $\V/L^\perp\cong L^*$:

\begin{lemma} \label{lem:TOM}
For any $L\in M$, the tangent space $T_LM \subset L^*\otimes\V/L$ is identified with the space of para-complex linear maps $w:L\to\V/L$ such that $q\o w:L\to L^*$ is skew.
Under this identification, the contact distribution $\D_L\subset T_LM$ corresponds to $q\o w=0$, i.e. it is isomorphic to the space of para-complex elements in $L^* \otimes L^\perp/L$.

Any $L\in M$ and $w\in T_LM$ determines a $\K$-invariant vector subspace $W:=\op{im}(w)+L\subset\V$, where $\op{im}(w)$ denotes the image of $w$, interpreted as an element of $L^*\otimes \V/L$.
The type of a non-zero vector $w$ can be read from the dimension and the restricted inner product on the corresponding subspace $W$ as follows:
\begin{enumerate}[(a)]
\item $w$ belongs to $E$ or $F$ if and only if $W$ has real dimension 3 and is null, 
\item $w$ is a null-vector from $\D\setminus(E\cup F)$ if and only if $W$ has real dimension 4 and is null,
\item $w$ is a non-null vector from $\D$ if and only if $W$ has real dimension 4 and is half-degenerate, 
\item $w$ is a transverse vector to $\D$ if and only if if $W$ has real dimension 4 and is non-degenerate.
\end{enumerate}
\end{lemma}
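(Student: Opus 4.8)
The plan is to exploit the two descriptions of $M$ in tandem: as a submanifold of the Grassmannian of real $2$-planes in $\V$, which makes the inclusion $T_LM\subset L^*\otimes\V/L$ explicit, and as $G/P$, where the base point $O=\<e_0,e^{n+1}\>$ together with the Maurer--Cartan identification $T_OM\cong\g_-$ turns the case analysis into block-matrix bookkeeping; $G$-equivariance then carries every conclusion from $O$ to an arbitrary $L$.

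\emph{Step 1: the description of $T_LM$ and $\D_L$.} By interpretation (e) of Proposition \ref{prop-LC-model}, $M$ is the set of para-complex null lines in $\V$, hence a submanifold of $\P(\V_+)\times\P(\V_-)$ cut out by the null condition. The tangent space of $\P(\V_+)\times\P(\V_-)$ at $L=L_+\oplus L_-$, $L_\pm=L\cap\V_\pm$, is $\op{Hom}(L_+,(\V/L)_+)\oplus\op{Hom}(L_-,(\V/L)_-)$, that is, the space of $\K$-equivariant maps $w\colon L\to\V/L$. To cut $M$ out inside this, differentiate the null condition $v_a(t)\.v_b(t)\equiv0$ along a curve $L_t$ in $M$ with $\dot L_0=w$, for a moving frame $v_a(t)$ of $L_t$: since $L$ is null and $L\subset L^\perp$, the value of $\dot v_a\.v_b$ at $t=0$ depends on $\dot v_a$ only modulo $L^\perp$, hence only on $w(v_a)$ through the projection $q\colon\V/L\to\V/L^\perp\cong L^*$, and vanishing of the derivative says precisely that $(x,y)\mapsto\<q(w(x)),y\>$ is skew on $L$, i.e.\ $q\o w$ is skew. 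A $\K$-equivariant $w$ forces $q\o w$ to have vanishing diagonal (in a basis of $L_+\oplus L_-$), hence to be of shape $\spmat{0&a\\d&0}$ with $a,d$ free; the skew condition $a=-d$ is then a single equation, leaving a space of dimension $2n+1=\dim M$, which must therefore be $T_LM$. Finally, $q\o w=0$ is equivalent to $\op{im}\,w\subseteq L^\perp/L$; this condition is $G$-invariant, and at $L=O$ a direct block computation identifies it with the $\g_{-1}$-component of $\g_-\cong T_OM$, which is $\D_O$, so it cuts out $\D$ everywhere, and $\D_L$ is the space of para-complex elements of $L^*\otimes L^\perp/L$.

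\emph{Step 2: the four cases.} The assignment sending $(L,w)$ to $W$ together with the restriction of the inner product to $W$ is $G$-equivariant, and $G$ acts transitively on $M$, so it suffices to take $L=O$. Under $T_OM\cong\g_-$, an element $\xi$ acts on $\V=\V_+\oplus\V_-$ (standardly on $\V_+\cong\R^{n+2}$, by minus the transpose on $\V_-$), the corresponding $w$ satisfies $w(v)=\ov{\xi\.v}$ for $v\in O$, and hence $W=\<e_0,e^{n+1},\xi\.e_0,\xi\.e^{n+1}\>$, which is $\K$-invariant since $\op{im}\,w$ is. Writing $\xi=\spmat{0&0&0\\X&0&0\\z&Y^t&0}$, so that $w\in\D$ exactly when $z=0$, the vectors $\xi\.e_0\in\V_+$ and $\xi\.e^{n+1}\in\V_-$ have all mutual inner products zero except a pair of entries proportional to $z$ and one proportional to $Y^t X$, as $\V_\pm$ are totally null. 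Reading off the Gram matrix of $W$: if $\xi\in\g_{-1}^E$ or $\g_{-1}^F$ then $\dim_\R W=3$ and $W$ is null; if $z=0$ and $X,Y\neq0$ then $\dim_\R W=4$ with radical $O$, which is null precisely when $Y^t X=0$, equivalently precisely when $w$ is a null-vector of the LC structure (by the description of the contact null cone recalled in Section \ref{sec-LC-general}), and half-degenerate otherwise; if $z\neq0$ then $\dim_\R W=4$ with Gram determinant $z^4\neq0$, i.e.\ $W$ non-degenerate. Since the four alternatives for $w\neq0$ are mutually exclusive and exhaustive, and the four pairs (dimension of $W$, type of the restricted form) are pairwise distinct, all the stated equivalences follow.

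\emph{The main obstacle.} The only genuinely delicate point is Step 1's differentiation of the null condition, specifically the observation that $\dot v_a\.v_b$ depends on $w(v_a)$ only through $q$, which rests on $L\subseteq L^\perp$ and is what yields the intrinsic ``$q\o w$ skew'' description; the remainder is a dimension count, an appeal to $G$-transitivity, and the block-matrix computation of Step 2, whose output is already dictated by the model picture of the contact null cone.
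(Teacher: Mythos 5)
Your argument is correct, and its core --- the reduction to the origin $O$ by $G$-equivariance, the identification of $W$ as the span of $e_0$, $e^{n+1}$, $\xi\.e_0$, $\xi\.e^{n+1}$ for $\xi=\spmat{0&0&0\\X&0&0\\z&Y^t&0}$, and the Gram matrix whose only nonzero entries are (up to sign) $z$ and $Y^tX$ --- coincides with the computation in the paper's proof, down to the observation that nullity of $W$ in the four-dimensional case detects exactly the contact null-cone $Y^tX=0$. The one genuine divergence is in the first claim: the paper writes down the explicit $P$-equivariant map \eqref{eq-TOM} from $\g/\p$ into $O^*\otimes\V/O$, checks by inspection that it is injective, para-complex, and has skew $q\o w$, and identifies its image with the stated space; you instead derive the skewness condition intrinsically, as the linearization of the incidence (null) relation cutting $M$ out of $\P(\V_+)\times\P(\V_-)$, using $L\subseteq L^\perp$ to see that only $q\o w$ enters, and then close the identification with the dimension count $2n+2-1=2n+1=\dim M$ (the skewness equation being visibly nontrivial on the space of para-complex maps). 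Your route explains where the skewness comes from and makes surjectivity onto the stated space explicit, a point the paper passes over quickly; the paper's route has the benefit of producing the concrete formula \eqref{eq-TOM}, which is reused verbatim in the proofs of Propositions \ref{prop:model-chain} and \ref{prop:model-null-chain}. Both arguments are sound, and I see no gap in yours.
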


\begin{proof}
To describe the typical fiber of $TM$, we restrict to the origin $O=\<e_0,e^{n+1}\>$, the para-complex line in $\V$ that is fixed by the subgroup $P\subset G$. 
To specify the inclusion  $T_OM\cong\g/\p \subset O^*\otimes\V/O$, let us consider the map
\begin{align}
\g/\p \ni
\bmat{ \. & \. & \. \\ X^i & \. & \. \\ z & Y_j & \. }
\longmapsto
\bmat{ e_0 \mapsto X^i e_i + z e_{n+1} \\ e^{n+1} \mapsto Y_j e^j - z e^0 } 
\in O^*\otimes\V/O.
\label{eq-TOM}
\end{align}
This map is clearly injective, linear, para-complex and the composition $q\o w\in O^*\otimes O^*$ is skew.
Thus, it is an isomorphism onto the image, which is just the space described in the statement.
To conclude, one easily checks that the map is also $P$-equivariant, i.e. commutes with the $P$-action on $\g/\p$ and $O^*\otimes\V/O$, respectively.
The description of the contact subspace is clear:
the contact subspace corresponds to $\g_{-1}\subset\g$, which is given by $z=0$ in the description above, and its image under the map \eqref{eq-TOM} is $O^*\otimes O^\perp/O$.

The subspace $W\subset\V$ associated to tangent vector as in \eqref{eq-TOM} is $\< X^i e_i + z e_{n+1}, Y_j e^j - z e^0, e_0, e^{n+1} \>$. 
It is clearly $\K$-invariant and its real dimension is at least 2.
Generically, the dimension is 4. In special cases, it is 3, respectively 2.
The two special cases happens if and only if $z=X=0$ or $z=Y=0$, respectively $z=X=Y=0$, which corresponds to elements from $E$ or $F$, respectively to the zero vector.
To distinguish the types, let us take the Gram matrix of the inner product induced on $W$:
\begin{align*}
\pmat{ 0 & a & 0 & z \\ a & 0& -z & 0 \\ 0 & -z & 0 & 0 \\ z & 0 & 0 & 0 } ,
\end{align*}
where $a=X^iY_i$.
The cases (a--b) correspond to $a=z=0$, 
the case (c) corresponds to $a\ne0$ and $z=0$, 
and the case (d) corresponds to $a\ne0$ and $z\ne0$. 
Hence the claim follows.
\end{proof}

Notice that, in all cases except (a), the subspace $W\subset\V$ is a para-complex plane. 
In particular, in the non-degenerate case (d), the signature of the inner product on $W$ is split.

\subsection{Local embeddability}\label{localembedability}
The problem of local embeddability concerns whether a given structure can be locally realized as the induced structure on a submanifold of an ambient space with a related structure. 
In the context of CR geometry, this means a realization as a hypersurface in complex space, with the contact distribution being the maximal complex distribution of the tangent bundle.
The obvious necessary condition is the integrability of the CR structure.
Although the topic is extensively studied for decades, it still contains open problems.
The discussion depends both on dimension of the CR manifold and its signature, cf. 
\cite{Andreotti1972,Kuranishi1982,Akahori1987,Webster1989}.

Analogously, the embeddability of a LC structure means a realization as a hypersurface in a para-complex space as follows.
For LC structures of dimension $2n+1$, the relevant para-complex space is $\V':=\R^{n+1,n+1}$ with the para-complex structure $\K'$ whose eigenspaces $\V'_+$ and $\V'_-$ are spanned by the first and the last $n+1$ vectors of the standard basis of $\R^{n+1,n+1}$, respectively.
The corresponding standard coordinates on $\V'_+$ and $\V'_-$ are denoted by $(v^a)$ and $(w_a)$, respectively, where $a=1,\dots,n+1$.
Let $M\subset\V'$ be a hypersurface such that the maximal $\K'$-invariant distribution in $TM$, i.e. $\D:=TM\cap\K'(TM)$, is a contact distribution.
Then the induced LC structure on $M$ is given by $E:=\D\cap\V'_+$ and $F:=\D\cap\V'_-$, which form an integrable Legendrian decomposition of $\D$. 
In particular, the leaves of $E$ and $F$ are the level sets of the coordinate functions $(w_a)$ and $(v^a)$, respectively. 

We say that a LC structure $\D=E\oplus F \subset TM$ is \emph{locally embeddable} into $\V'$ if, for every $x\in M$, there exists a local embedding from a neighbourhood $U$ of $x$ into $\V'$ such that $E=TU\cap\V'_+$ and $F=TU\cap\V'_-$.
Compared to the subtleties in the CR case, the characterization of local embeddability is quite simple:

\begin{prop} \label{prop-embed}
A LC structure is locally embeddable into a para-complex space if and only if it is integrable. 
\end{prop}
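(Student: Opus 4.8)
The plan is to prove both directions by exhibiting, in suitable coordinates, the correspondence between defining functions of an abstract (half-)integrable LC structure and defining functions of a hypersurface in $\V'=\R^{n+1,n+1}$. The ``only if'' direction is almost immediate from the setup preceding the statement: if $\D=E\oplus F\subset TM$ is locally embeddable, then $E=TU\cap\V'_+$ and $F=TU\cap\V'_-$ are both tangent to the level sets of $(w_a)$ and $(v^a)$ respectively, hence both are integrable, so the structure is integrable. The substantive content is the ``if'' direction.

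For the ``if'' direction I would start from an integrable LC structure and invoke the normal form recorded in \eqref{eq-frame}--\eqref{eq-coframe}: by Darboux and Frobenius there are local coordinates $(x^i,u,p_j)$ in which $F=\<\partial/\partial p_i\>$, $\si=\d u-p_i\d x^i$, and $E=\<\partial/\partial x^i+p_i\partial/\partial u+f_{ij}\partial/\partial p_j\>$ for symmetric functions $f_{ij}=f_{ij}(x^k,u,p_l)$. Integrability, via the remark following \eqref{eqn: LC PDE}, is precisely the compatibility of the second-order system $\partial^2 u/\partial x^i\partial x^j=f_{ij}(x^k,u,\partial u/\partial x^k)$, so the leaves of $E$ are the graphs $\{u=\phi(x),\ p_i=\partial\phi/\partial x^i\}$ of its solutions, foliating the $(x^i,u)$-space $\R^{n+1}$ by an $(n+1)$-parameter family; dually the leaves of $F$ foliate the $(p_i)$-directions. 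The idea is to use these two transverse foliations to build the embedding: identify the leaf space of $E$ with $\V'_+\cong\R^{n+1}$ via coordinates $(v^a)=(v^0,v^i)$ and the leaf space of $F$ with $\V'_-\cong\R^{(n+1)*}$ via $(w_a)=(w_0,w_i)$, and map a point $y\in M$ to the pair (leaf of $E$ through $y$, leaf of $F$ through $y$) $\in\V'_+\times\V'_-=\V'$. Concretely, an $E$-leaf is specified by initial data $(c^0,c^i)$ for $(u,\partial u/\partial x^i)$ at a base point $x=x_0$, giving $v^0=u(x_0),\ v^i=p_i(x_0)$ along that leaf; an $F$-leaf through $(x^i,u,p_j)$ is simply $\{x^i,u\ \text{const}\}$, so one may take $w_i=x^i$ and $w_0=u$. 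One then checks this map is an immersion onto a hypersurface of $\V'$ and that $TU\cap\V'_+=E$, $TU\cap\V'_-=F$; the hypersurface is cut out by one real equation relating $(v,w)$, whose existence follows from a dimension count ($M$ has dimension $2n+1=\dim\V'-1$) together with the immersion property. It is instructive (and consistent with Remark~\ref{rmk:model 2.5}, where the flat model appears as a hyperquadric in $\R^{n+1,n+1}$) to note that when all $f_{ij}\equiv0$ one recovers $u=v^0+w_i\,(\text{something})$, i.e. the affine quadric picture.

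The main obstacle I anticipate is organizing the solution data of the PDE system \eqref{eqn: LC PDE} into honest coordinate functions on a neighbourhood: one must apply the Frobenius/Cauchy--Kovalevskaya-type existence theorem for the compatible system (compatibility being equivalent to integrability by \cite{Doubrov2020}) and verify smooth dependence on the initial data, so that $(v^a,w_a)$ jointly form a coordinate system on $U$ minus the one relation defining the hypersurface. Equivalently — and this may be the cleaner route — observe that $E$ and $F$ are transverse integrable distributions whose sum is the contact distribution $\D$, so the (local) leaf spaces $M/E$ and $M/F$ are smooth manifolds of dimension $n+1$, and the natural map $M\to (M/E)\times(M/F)$ is an immersion because $\ker = E\cap F=0$; its image is then a hypersurface by the dimension count, and pulling back the para-complex structure of $\V'=(M/E)\times(M/F)$ recovers exactly $E,F$ as the intersections with $\V'_\pm$ by construction. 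Verifying that $\D=TM\cap\K'(TM)$ is genuinely a contact distribution of the induced structure (not something larger or degenerate) uses non-degeneracy of the Levi form, which transfers from the abstract structure since the immersion is an isomorphism onto its image. Once these points are in place, the proposition follows.
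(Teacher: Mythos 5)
Your proposal is correct and follows essentially the same route as the paper: Frobenius supplies first integrals of $E$ which, together with the coordinates $(x^i,u)$ (first integrals of $F$), define an immersion into $\R^{n+1,n+1}$ whose image is a hypersurface inducing the given structure, and your ``cleaner route'' via $M\to(M/E)\times(M/F)$ is exactly the paper's map in invariant form. (One harmless labeling slip: since vectors in $E$ are killed by the projection to the $E$-leaf space, that factor should be identified with $\V'_-$ rather than $\V'_+$ to get $E=TU\cap\V'_+$; just swap the eigenspace labels.)
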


\begin{proof}
Locally embedded LC structures are necessarily integrable, as explained above.
For the reverse direction, let a LC structure $\D=E\oplus F$ on $M$ be integrable and let $(x^i,u,p_i)$, for $i=1,\dots,n$, be a local coordinates on $M$ such that $E$ and $F$ are described as in \eqref{eq-frame}.
This choice is adapted to the integrability of the distribution $F$.
Since $E$ is also integrable, the Frobenius theorem implies that, locally, there are functions $(z_a)$, for $a=1,\dots,n+1$, such that the differentials $(\d z_a)$ are linearly independent and the leaves of $E$ are the level sets of $(z_a)$.
Let the map from $M$ to $\V'=\R^{n+1,n+1}$ be given by 
\begin{align}
v^i=x^i, \quad
v^{n+1}=u, \quad
w_a=z_a,
\label{eq-embed}
\end{align}
where $i=1,\dots,n$ and $a=1,\dots,n+1$.
This map has rank $2n+1$, therefore it is an immersion. 
Its local image is a hypersurface in $\V'$, denoted as $M'$.
Accordingly, let us denote the image of $\D=E\oplus F$ as $\D'=E'\oplus F'$.
The leaves of $E'$ and $F'$ are the level sets of $(w_a)$ and $(v^a)$, respectively, thus we have $E'=TM'\cap\V'_+$ and $F'=TM'\cap\V'_-$. 
It remains to show that $\D'$ is the maximal $\K'$-invariant distribution in $TM'$.
Since $\D'=E'\oplus F'$, it is a $\K'$-invariant distribution.
Since $\D'\subset TM'$ is a contact distribution, there is no bigger $\K'$-invariant distribution.
\end{proof}

The argument in the proof is implicit by nature.
In terms of Section \ref{sec-LC-general}, an explicit realization leads to solving the system of 1st-order PDEs
\begin{align}
\parder{z}{x^i} +p_i\parder{z}{u} +f_{ij}\parder{z}{p_j} =0 ,
\label{eq-emb-PDE}
\end{align}
for $i=1,\dots,n$.
As usual, potential realizations are by no means unique.

Specifically, let us consider all $f_{ij}$ vanishing, i.e. the corresponding LC structure being locally flat.
A generating set of solutions to \eqref{eq-emb-PDE} can be given by $z_i=p_i$ and $z_{n+1}=u-x^jp_j$.
The image of the corresponding embedding \eqref{eq-embed} is then a hyperquadric in $\V'=\R^{n+1,n+1}$ given by the equation $v^jw_j-v^{n+1}+w_{n+1}=0$.
This is a possible affine realization of the model LC hyperquadric mentioned in Remark \ref{rmk:model 2.5}.

\section{Fefferman construction on LC structures}
\label{sec:Fefferman construction}

In this section, we describe the LC analogue of the classical Fefferman construction in terms of the associated Cartan geometries, cf. \cite[Section 4.5]{Cap2009}.
The primary benefit of the Cartan geometric perspective is that the construction provides a systematic framework in which we can extend results on the model to the general curved setting.
Our main contribution concerns the explicit description of a representative metric from the Fefferman conformal class associated to an integrable LC structure, see Theorem \ref{prop-Feff-integrable}.
This is obtained by suitably calibrating an adapted Cartan gauge.
Comparisons with alternative approaches and known results in the lowest dimensional case are in Remark \ref{rem-feff}.
An analogous procedure, in the case where a LC structure is induced by projective structure, is treated separately in Section \ref{LC-proj-feff}.

\subsection{Model situation} \label{sec:Feff model}
Here we adopt the notation and observations from Section \ref{sec-LC-model}.
They, in particular, yield the interpretation of the model LC manifold $M$ as the para-complex projectivization of the null-cone $\N$ in the ambient para-Hermitian space $\V=\R^{n+2,n+2}$.
In fact, this restricts to the open subset $\mc N_0=\mc N\setminus(\V_+\cup \V_-)$, since $\V_{\pm}$ does not contain any para-complex lines.
The projection map $\N_0\to M$ factors through the real projectivization of $\N_0$, which we denote by $\wt{M}$.
To be more specific, any $v\in\V$ is uniquely written as $v=v_++v_-$, where $v_\pm\in\V_\pm$.
If $v\in\N_0$, then the two projections $\N_0\to\wt{M}\to M$ compose as
\begin{align}
v_++v_- \mapsto \<v_++v_-\> \mapsto \<v_+,v_-\> ,
\label{eq-v+v-}
\end{align}
where the latter map is denoted by $\pi$.
The intermediate manifold $\wt{M}$ is just (an open subset of) the standard M\"{o}bius space, the model conformal quadric.
The conformal structure is induced by the ambient data where we forget the para-complex structure on $\V$. 
This is the model Fefferman space for LC structures.

The ambient para-complex structure $\K$ is a skew endomorphism of $\V$.
Thus, it is an element of $\wt\g=\mf{so}(n+2,n+2)$ and its exponential $\exp(\K) \in \wt G=SO(n+2,n+2)$ acts non-trivially on $\wt M$.
The typical fiber of the projection $\pi:\wt{M}\to M$ is then identified with the group $\R\times\Z_2$, whose two continuous components are parametrized as 
\begin{align}
s \mapsto \exp(s\K) , \quad
s \mapsto \K\exp(s\K) ,
\label{eq-K-expK}
\end{align}
where $s\in\R$.
For the setting as in \eqref{eq-v+v-}, the assignment $v_+=\frac1{\sqrt2}(1+k)$ and $v_-=\frac1{\sqrt2}(1-k)$ provides an identification of the fiber with two continuous branches of para-complex numbers of the norm $\pm 1$, i.e. numbers of the form 
\begin{align}
e^{k s}:=\cosh s+k\sinh s , \quad 
k e^{k s}=\sinh s+k\cosh s ,
\label{eq-versor}
\end{align}
where $s\in\R$ and $k$ is the para-complex unit ($k^2=1$).
Notably, the common asymptotes of these two curves correspond to the directions of $\V_{\pm}$. 
This way we introduce a natural (but not canonical) fiber coordinate on $\wt M$.
For later purposes, we need to make the following additional observations.

Both the model LC quadric $M$ and its Fefferman space $\wt{M}$ are  homogeneous spaces of the principal group $G=SL(n+2,\R)$.
The group $G$ acts transitively on both para-complex and real null-lines in $\N_0$. 
Hence, $M\cong G/P$ and $\wt{M}\cong G/Q$, where $P\subset G$ and $Q\subset P$ stabilize the para-complex null-line $\<e_0,e^{n+1}\>=O$ (as above) and the real null-line $\<e_0+e^{n+1}\>=:\wt{O}$, respectively.
The whole M\"{o}bius space, i.e. the real projectivization of the whole null-cone $\N$, is the homogeneous space $\wt{G}/\wt{P}$, where $\wt{G}=SO(n+2,n+2)$ and $\wt{P}\subset\wt{G}$ is the (parabolic) subgroup stabilizing $\wt{O}$.
The subgroup $Q\subset G$ is the preimage of $\wt{P}\subset\wt{G}$ under the embedding $G\subset\wt{G}$ described in \eqref{eq-eta}.
From this description and the form of the generator of $\wt O$, it follows that the previous pair of nested subgroups of $G$ has the following schematic realization
\begin{align}
Q = \pmat{a&*&*\\ 0&B&* \\ 0&0&a^{-1}} \subset \pmat{c&*&*\\ 0&D&* \\ 0&0&e} = P ,
\label{eq-Q-P}
\end{align}
where we again refer to the block decomposition as in \eqref{eq-LC-g}, in particular, $a,c,e\in\R\setminus\{0\}$, $B,D\in GL(n,\R)$ and $\det B=ce\det D=1$.
The subgroup $Q$ is normal in $P$ and so $P/Q$, the typical fiber of the projection $\pi:G/Q\to G/P$, is a group which is isomorphic to $\R\times\Z_2$, as above.
The relation to the description in \eqref{eq-K-expK}, or \eqref{eq-versor}, is given by 
\begin{align}
s \mapsto \exp
\begin{pmatrix}
s & 0 &0 \\
0 & -\frac{2 s}{n}\mbb{I}_n & 0 \\
0 & 0 &  s
\end{pmatrix}, \quad
s \mapsto 
\begin{pmatrix}
 1 & 0 & 0\\
 0 & \mbb{I}_{n-1,1} & 0\\
 0 & 0 & -1
\end{pmatrix}
 \exp
\begin{pmatrix}
s & 0 &0 \\
0 & -\frac{2 s}{n}\mbb{I}_{n} & 0 \\
0 & 0 & s
\end{pmatrix} ,
\label{eq-P-Q}
\end{align}
where $s\in\R$, $\mbb{I}_n$ is the identity matrix of rank $n$, and $\mbb{I}_{n-1,1}$ is the diagonal matrix of signature $(n-1,1)$. 
Note that the union of the two curves in \eqref{eq-P-Q} gives a preferred (but not canonical) global section of $P\rightarrow P/Q$.
In this way, the fibers of the projection $\pi$ are seen as the orbits of these elements on $\wt M\cong G/Q$.

Denoting by $\wt\G$ the preimage of $G/Q\subset \wt G/\wt P$ under the principal $\wt P$-projection $\wt G\to\wt G/\wt P$, the actual understanding of the model Fefferman construction can be sketched as follows:
\begin{align}
\begin{split}
\xymatrix@R=.7\baselineskip{
& \wt\G \ar[dd] \\
G \ar[dd] \ar[dr] \ \ar@{^{(}->}[ur] & \\
& \wt M\cong G/Q  \ar[dl]^\pi \\
M\cong G/P & \\
}
\end{split}
\label{dia-feff-extension}
\end{align}

\subsection{General scheme} \label{LC-feff-rough}
The Fefferman construction of a LC structure modifies the previous homogeneous picture \eqref{dia-feff-extension} to a general LC Cartan geometry of type $(G,P)$.
The general approach we follow is developed in \cite{Cap2006} and \cite{Cap2009}, details related to our present situation are adapted from \cite{Hammerl2017}.

Let $(\G\to M,\om)$ be a Cartan geometry of type $(G,P)$ with an underlying LC structure on $M$. 
Let $Q\subset P$ be as above. 
The \emph{Fefferman space} of $M$ is the corresponding quotient space $\wt{M}:=\G/Q$.
The Cartan connection $\om$ on $\G$ defines the Cartan geometry of type $(G,Q)$ over $\wt{M}$. 
The embedding $G\subset\wt{G}$ produces an equivariant extension of the previous Cartan geometry to a Cartan geometry $(\wt\G\to\wt M,\wt\om)$ of type $(\wt G,\wt P)$ as follows:
denoting the embedding by $\eta$ as in \eqref{eq-eta}, the restriction $\eta|_Q:Q\to\wt{P}$ is behind the extension of the principal bundle $\wt\G:=\G\times_Q\wt{P}$, and its derivative $\eta':\g\to\wt{\g}$ extends the values of the Cartan connection $\wt\om:=\eta'\o\om$.
The underlying structure of the latter Cartan geometry is the induced conformal structure on the Fefferman space.
Thus, the Fefferman construction with the groups specified above produces a conformal Cartan geometry from a given LC Cartan geometry.

As an associated bundle, the Fefferman space can be described as 
\begin{align}
\wt M \cong \G\times_P P/Q .
\label{eq-feff-PQ}
\end{align}
Its tangent bundle can be seen in two ways
\begin{align}
T\wt{M} \cong \G\times_Q \g/\q \cong \wt\G\times_{\wt{P}} \wt\g/\wt\p ,
\label{eq-feff-TM}
\end{align}
cf. \eqref{eq-TM}.
The conformal structure on $\wt M$ corresponds to the standard inner product on  $\wt\g/\wt\p$,
i.e. the unique $\wt{P}$-invariant inner product, up to a non-zero multiple.
This corresponds to an inner product on  $\g/\q$ that is $Q$-invariant up to a non-zero multiple.
With these preparations, we can interpret the Fefferman space and the induced conformal structure in terms of the underlying LC data.

\begin{prop} \label{prop-feff-rough}
Let $(\G\to M,\om)$ be a parabolic geometry with the underlying LC structure $\D=E\oplus F\subset TM$, and let $(\wt\G\to\wt{M},\wt\om)$ be the conformal parabolic geometry obtained by the Fefferman construction.
Then the Fefferman space $\wt{M}$ is identified as
\begin{align}
\wt M \cong \mc{P}\big( (\wedge^n E \oplus\wedge^n F) \setminus (\wedge^n E\cup\wedge^n F) \big) ,
\label{eq-feff-space}
\end{align}
where $\mc{P}$ denotes the (real) projectivization and $n=\op{rank} E=\op{rank} F$.
The conformal structure on $\wt{M}$ is represented by the metric corresponding to the quadratic form on $\g/\q$ given by 
\begin{align}
\bmat{z&*&* \\ X&*&* \\w&Y^t&z} \mapsto Y^tX+2zw .
\label{eq-feff-form}
\end{align}
\end{prop}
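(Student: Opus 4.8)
The plan is to exploit the associated-bundle description \eqref{eq-feff-PQ} together with the explicit matrix realization \eqref{eq-Q-P} of the pair $Q\subset P$. First I would identify the fiber $P/Q$ with the underlying geometric data: from \eqref{eq-Q-P} an element of $P$ is determined modulo $Q$ by the pair $(c,e)\in(\R\setminus\{0\})^2$ subject only to $\det D=(ce)^{-1}$, equivalently by the class of $(c,e)$ under the $Q$-action, which rescales $c$ and $e$ independently by positive scalars coming from $a$ and the determinant of $B$. Matching this with the $P$-action on $\wedge^nE\cong\g/\p$-weights and $\wedge^nF$, one sees that the $P$-representation $P/Q$ is precisely the projectivization of $(\wedge^nE\oplus\wedge^nF)\setminus(\wedge^nE\cup\wedge^nF)$: the two line bundles $\wedge^nE$ and $\wedge^nF$ carry exactly the one-dimensional $P$-representations through which $c$ and $e$ act (these are read off from the block sizes in \eqref{eq-LC-g}), the condition of lying off either axis reflects that $Q$ stabilizes a null line meeting both $\V_\pm$ nontrivially, and the overall projectivization accounts for the residual $\R^\times$ (the $a$-scaling). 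Combined with \eqref{eq-feff-PQ}, this gives \eqref{eq-feff-space}.

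Next I would pin down the conformal structure. By the general scheme in Section \ref{LC-feff-rough}, the conformal class on $\wt M$ corresponds, via \eqref{eq-feff-TM}, to the unique $\wt P$-invariant line of inner products on $\wt\g/\wt\p$, pulled back along $\eta':\g\to\wt\g$ to a $Q$-invariant-up-to-scale quadratic form on $\g/\q$. So the task is to compute that quadratic form explicitly. I would take a representative of the class in $\g/\q$ of the form $\spmat{z&*&*\\ X&*&*\\ w&Y^t&z}$ — note the two diagonal $z$-entries are forced equal because $\g\subset\wt\g=\mf{so}(n+2,n+2)$ traceless-plus-skew constraints, and more precisely because modulo $\q$ (which from \eqref{eq-Q-P} contains the strictly-upper-triangular part and the semisimple part with equal top-and-bottom entries) only the listed data survive. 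Then I would use the explicit realization of $\eta$ from Section \ref{sec-LC-model}: $\eta$ sends $A\in\mf{sl}(n+2,\R)$ to the block-diagonal endomorphism $\mathrm{diag}(A,-A^t)$ of $\V=\V_+\oplus\V_-$. Transporting a representative null vector — e.g. following the identification $v_\pm=\tfrac1{\sqrt2}(1\pm k)$ from \eqref{eq-versor} so that $\wt O=\<e_0+e^{n+1}\>$ — and evaluating the ambient split-signature inner product on the $\wt\g/\wt\p$-image of the above matrix, I expect to land on $Y^tX+2zw$ up to the overall scale, which is all that matters for a conformal class.

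Concretely, the computation reduces to the following: realize $T_{\wt O}\wt M\cong\wt\g/\wt\p$ as the space of linear maps $\wt O\to\wt O^\perp/\wt O$ (the analogue of Lemma \ref{lem:TOM} at the conformal level), and the conformal quadratic form is the norm squared of the image of a generator of $\wt O$ — this is the standard description of the M\"obius-space conformal structure. Feeding in $\eta'$ applied to $\spmat{z&*&*\\ X&*&*\\ w&Y^t&z}$ and the generator $e_0+e^{n+1}$ of $\wt O$, the image in $\wt O^\perp/\wt O$ pairs with itself to give, after the dust settles, a multiple of $Y^tX+2zw$; I would fix the normalization $1$ on $Y^tX$ and the factor $2$ on $zw$ by directly pairing $e_i\leftrightarrow e^i$ (giving the $Y^tX$ term from the $X$- and $Y^t$-entries) and tracking how the single parameter $z$ enters both the $\V_+$- and $\V_-$-components of the image (which is what produces the $2zw$, the $2$ being the count of the two components, or equivalently the two equal diagonal $z$-entries). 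I would also remark that $Q$-invariance up to scale of $Y^tX+2zw$ is a quick check against \eqref{eq-Q-P}: conjugation by $\mathrm{diag}(a,B,a^{-1})$ scales $X\mapsto a^{-1}BX$, $Y^t\mapsto aY^tB^{-1}$, $z\mapsto z$, $w\mapsto a^{-2}w$, which does not preserve the form on the nose, so the genuine invariance only appears after also using the $P/Q$-normalization — exactly the content of "conformal", not "metric". The main obstacle I anticipate is bookkeeping: getting the placement of signs in $\mathrm{diag}(A,-A^t)$, the identification $\V_-\cong\V_+^*$, and the choice of generator of $\wt O$ all mutually consistent so that the cross term comes out $+2zw$ rather than $-2zw$ or $\pm zw$; this is purely a matter of fixing conventions carefully, but it is the step most prone to error. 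Everything else — surjectivity of $\N_0\to\wt M\to M$, identification of $P/Q$ with $\R\times\Z_2$, the weight computation for $\wedge^nE,\wedge^nF$ — is already assembled in Sections \ref{sec-LC-model} and \ref{sec:Feff model} and only needs to be invoked.
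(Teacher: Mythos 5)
Your handling of the conformal form is correct and is, in fact, the route the paper only mentions as an alternative: you pull back the standard inner product on $\wt\g/\wt\p$ along $\eta'$ and evaluate on a generator of $\wt O=\<e_0+e^{n+1}\>$, which does produce a nonzero multiple of $Y^tX+2zw$; the paper's primary argument is instead that \eqref{eq-feff-form} is the \emph{unique} $Q$-invariant-up-to-scale quadratic form on $\g/\q$. Either works. But your closing ``quick check'' contains an arithmetic slip with a misleading conclusion: under conjugation by $\mathrm{diag}(a,B,a^{-1})$ one has $Y^t\mapsto a^{-1}Y^tB^{-1}$ (not $aY^tB^{-1}$) and $w\mapsto a^{-2}w$, so $Y^tX\mapsto a^{-2}Y^tX$ and $zw\mapsto a^{-2}zw$ --- the form \emph{is} $Q$-invariant up to the common scale $a^{-2}$. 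That is exactly what must hold; if it failed, the form would not descend to $\wt M=\G/Q$ at all, and no ``$P/Q$-normalization'' could rescue it.

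The genuine gap is in the identification \eqref{eq-feff-space}, and it is a parity issue your sketch never confronts. Carrying out the weight computation you propose, $\mathrm{diag}(c,D,e)\in P$ acts on $\wedge^n\g_{-1}^E$ by $c^{-n}\det D$ and on $\wedge^n\g_{-1}^F$ by $e^n\det D^{-1}$, hence on the set \eqref{eqn-feff-Lie} (a copy of $\R\setminus\{0\}$, the ratio $\ph/\eta$, with \emph{two} components) by multiplication by $\det D^{-n-2}$, using $ce\det D=1$. For $n$ odd this is transitive with stabilizer $\{\det D=1\}=Q$, and the orbit map gives the desired bijection $P/Q\cong\mc P(\ldots)$. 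For $n$ even, however, $\det D^{-n-2}>0$ always: the action preserves each component, the stabilizer of $\<\eta+\ph\>$ is $\{\det D=\pm1\}\supsetneq Q$, and the natural orbit map $P/Q\to\mc P(\ldots)$ is two-to-one onto a single component rather than a bijection. (Relatedly, your description of $P/Q$ as classes of $(c,e)$ rescaled ``independently by positive scalars'' is off --- $Q$ rescales $c$ by $a$ and $e$ by $a^{-1}$ with $a\in\R\setminus\{0\}$, so the invariant is $ce$, equivalently $\det D\in\R\setminus\{0\}$.) Your argument therefore only establishes \eqref{eq-feff-space} for odd $n$. The paper spends half its proof repairing the even case: it enlarges the acting group to $P'=P\times\Z_2$, with $-1$ acting by $\<\eta+\ph\>\mapsto\<\eta-\ph\>$, identifies \eqref{eqn-feff-Lie} with $P'/Q'$ where $Q'=Q_\pm\times\{1\}$, and then uses the twisted homomorphism $p\mapsto(p,\mathrm{sign}\det D)$ to produce an isomorphism $P/Q\cong P'/Q'$. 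Some device of this kind is indispensable for even $n$, and you need to supply it.
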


\begin{proof}
With the conventions beneath the formula \eqref{eq-LC-g}, the Legendrian subbundles $E,F\subset\D$ correspond to $P$-modules $\g_{-1}^E, \g_{-1}^F\subset\g_{-1}$.
The induced $P$-action on respective top exterior powers, hence on $\mc{P}(\wedge^n\g_{-1}^E \oplus\wedge^n\g_{-1}^F)$ is as follows.
For any $\eta\in\wedge^n\g_{-1}^E$ and $\ph\in\wedge^n\g_{-1}^F$, the element of $P$ taken from \eqref{eq-Q-P} acts as
\begin{align}
\<\eta + \ph\> \mapsto 
\<(c^{-n}\det D) \eta + (e^n\det D^{-1}) \ph\> =
\<\eta + (\det D^{-n-2})\ph\> ,
\label{eq-P-act}
\end{align}
where we have used $ce\det D=1$.
Thus, in the case $n$ is odd, $P$ acts transitively on 
\begin{align}
\label{eqn-feff-Lie}
\mc{P}\left((\wedge^n \mathfrak{g}_{-1}^E \oplus\wedge^n \mathfrak{g}_{-1}^F) \setminus (\wedge^n \mathfrak{g}_{-1}^E \cup\wedge^n \mathfrak{g}_{-1}^F)\right) 
\end{align}
and the stabilizer of any element in \eqref{eqn-feff-Lie} is given by $\det D=1$.
This condition determines exactly the subgroup $Q\subset P$, cf. \eqref{eq-Q-P}.
Hence the set in \eqref{eqn-feff-Lie} is the homogeneous space $P/Q$.
This, together with \eqref{eq-feff-PQ}, yields \eqref{eq-feff-space}.

In the case $n$ is even, the action above is not transitive but it can be extended as follows.
Let us consider the transformation of \eqref{eqn-feff-Lie} given by 
\begin{align}
\<\eta+\ph\> \mapsto \<\eta-\ph\> .
\label{eq-Z2-act}
\end{align}
This is a map of order 2 that commutes with the action of $P$. 
Let us consider the group  $P':=P\times\Z_2$, whose action on \eqref{eqn-feff-Lie} is given by the component actions \eqref{eq-P-act} and \eqref{eq-Z2-act}, respectively. 
(Here we consider $\Z_2$ as the multiplicative group $\{1,-1\}$ and \eqref{eq-Z2-act} describing the action of the element $-1$.)
This action is transitive and the stabilizer of any element is $Q':=Q_{\pm}\times\{1\}\subset P'$, where $Q_{\pm}\subset P$ is given by $\det D=\pm 1$.
Hence the set in \eqref{eqn-feff-Lie} is the homogeneous space $P'/Q'$ which, however, can be identified with $P/Q$ as follows.
Consider the map $P\to P'$ given by $p\mapsto (p,s(p))$,
where $s(p)$ denotes the sign of the determinant of $D$, the middle block of $p\in P$ as above.
This map is a group homomorphism  which induces an isomorphism of the factors $P/Q\cong P'/Q'$.

Altogether, the interpretation \eqref{eq-feff-space} holds independently of the parity of $n$.

For the second part of the proposition, one can verify that \eqref{eq-feff-form} is the only quadratic form on $\g/\q$ that is $Q$-invariant, up to a non-zero multiple.
Thus, together with \eqref{eq-feff-TM}, the claim follows.
Alternatively, one can use the $Q$-module isomorphism $\wt\g/\wt\p\cong\g/\q$ for rewriting the standard inner product on $\wt\q/\wt\p$.
The isomorphism is induced by the Lie algebra homomorphism $\g\to\wt\g$ corresponding to \eqref{eq-eta}.
Regarding the details, one needs explicit matrix realizations of the homomorphism (which is easy) and the respective subalgebras (which is a bit cumbersome).
Details for both potential approaches can be found in \cite{Hammerl2017}.
\end{proof}

There is a distinguished generator of the vertical subbundle of the projection $\pi:\wt{M}\to M$, namely, 
the fundamental vector field of the right action of the 1-parameter subgroup of $P/Q\cong\R\times\Z_2$, given in the left part of display \eqref{eq-P-Q}.
It can also be described as the projection of the vector field on  the total space of the bundle $p:\wt\G\to\wt M$, which corresponds to the para-complex structure $\K$ on $\V$ as follows.
Since $\K$ is an element of $\wt\g=\mf{so}(n+2,n+2)$, it defines a constant vector field on $\wt\G$, denoted by $\wt\om^{-1}(\K)$. 
Restricting to the image of the canonical inclusion $\G\subset\wt\G$, the vector field is $Q$-invariant, hence the projection 
\begin{align}
\k:=Tp(\wt\om^{-1}(\K)\vert_{\mathcal{G}}) 
\label{eq-K}
\end{align}
is well defined. 
This is a nowhere vanishing null-vector field on $\wt{M}$ whose orthogonal complement $\k^\perp\subset T\wt{M}$ projects to the contact distribution $\D\subset TM$.
In fact, there are finer data on $\wt{M}$ which allow to characterize the conformal structure obtained via the Fefferman-type construction, see \cite[Section 3.6]{Hammerl2017}.
In particular, there are null distributions $\wt{E},\wt{F}\subset T\wt{M}$ such that $\wt{E}+\wt{F}=\k^\perp$, $\wt{E}\cap\wt{F}=\<\k\>$ and which project to the Legendrian decomposition $E\oplus F=\D$.

The key question for constructions of the current type is whether (or in which cases) they preserve the normality condition.
Analogously to the original CR situation, 
this happens if and only if the initial Cartan geometry is torsion-free.
For our purposes, we also emphasize that, in the integrable case, the distinguished vector field $\k$ is a conformal Killing field.
Both these facts are first stated in \cite{Cap2008a} in the context of CR geometry and later adapted to the LC setting in \cite{Hammerl2017}.
The normality issues are typically subtle, while the fact that $\k$ is an infinitesimal conformal symmetry follows easily from the construction.

\begin{prop}[{\cite[Theorems 2.5 and 3.1]{Cap2008a}, \cite[Section 3.6]{Hammerl2017}}] \label{prop-feff-normal}
Let $(\G\to M,\om)$ be the normal parabolic geometry with the underlying LC structure and let $(\wt\G\to\wt{M},\wt\om)$ be the conformal parabolic geometry obtained by the Fefferman construction.
Then $\wt\om$ is normal if and only if $\om$ is torsion-free, i.e. the LC structure is integrable.
In such case, the vector field \eqref{eq-K} is a nowhere vanishing conformal Killing field. 
\end{prop}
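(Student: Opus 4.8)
The plan is to transfer everything to the curvature functions and then compare the two Kostant codifferentials, homogeneity by homogeneity. Since the embedding $\eta$ of \eqref{eq-eta} has injective, Lie-algebra-homomorphic derivative $\eta'\colon\g\to\wt\g$, applying $\eta'$ to the definition \eqref{eq-curvature} of curvature along $\G\subset\wt\G$ gives $\wt\Om=\eta'\o\Om$ for the curvature $2$-forms of $\wt\om=\eta'\o\om$ and of $\om$ (hence, by $\wt P$-equivariance of the curvature functions, everywhere). Passing to curvature functions, and using that $\eta'$ induces a $Q$-module isomorphism $\wt\g/\wt\p\cong\g/\q$ together with the canonical surjection $\g/\q\to\g/\p$, the curvature function $\wt\ka$ of $\wt\om$ is identified with $\eta'\o\ka$, where $\ka$ is the curvature function of $\om$, regarded as taking values in $\wedge^2(\g/\p)^*\otimes\g$ sitting inside $\wedge^2(\g/\q)^*\otimes\g\subset\wedge^2(\wt\g/\wt\p)^*\otimes\wt\g$.

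For the implication ``$\wt\om$ normal $\Rightarrow$ integrable'' I would argue as follows. A normal parabolic geometry with underlying conformal structure is torsion-free (for dimension $\ge4$ the harmonic curvature is the Weyl tensor, which has no torsion component; cf.\ \cite{Cap2009}), so normality of $\wt\om$ forces the $\wt\g/\wt\p$-valued part of $\wt\ka$ to vanish. Under the identification above this part is the $\g/\q$-valued part of $\ka$; composing with $\g/\q\to\g/\p$ shows that the torsion of $\om$, i.e.\ the $\g/\p$-valued part of $\ka$, vanishes. By the description of the harmonic curvature of LC structures recalled in Section \ref{sec-LC-general} --- in dimension $2n+1\ge5$ two torsion components, each detecting the integrability of one of $E$, $F$, and no torsion at all for $n=1$ --- vanishing of the torsion of $\om$ is precisely integrability of the LC structure.

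The converse, ``integrable $\Rightarrow$ $\wt\om$ normal'', is the substantive part. With $\om$ torsion-free, $\ka$ has values in $\wedge^2\p_+\otimes\p$ and satisfies $\del^*\ka=0$ for the Kostant codifferential of $\g$, and one must deduce $\wt\del^*(\eta'\o\ka)=0$ for the Kostant codifferential of $\wt\g$. The two codifferentials, compared through the module identifications above, do not agree; their difference, expanded by homogeneity with respect to the $\wt\p$-grading, is an explicit $P$-equivariant operator supported on the ``negative-degree'' (i.e.\ torsion) part of its argument. Hence on a $\del^*$-closed and torsion-free $\ka$ it contributes nothing, leaving only $\eta'(\del^*\ka)=0$. (Run on a general $\ka$, the same computation shows $\wt\om$ normal $\iff$ $\om$ torsion-free, recovering the other direction too.) Carrying this out needs the explicit matrix realization of $\eta'$ and of the graded pieces of $\g$ and $\wt\g$, together with Kostant's formula for $\del^*$; this is the CR computation of \cite[Theorems 2.5 and 3.1]{Cap2008a}, transcribed to the LC setting in \cite[Section 3.6]{Hammerl2017}, and it is the main obstacle --- I expect the bookkeeping of where the graded components of $\g$ land under $\eta'$ to be the only genuinely delicate point.

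For the final assertion, observe that $\K$ spans the centralizer of $\eta'(\g)$ in $\wt\g$: since $\R^{n+2}$ and $\R^{(n+2)*}$ are inequivalent (absolutely) irreducible $\mf{sl}(n+2,\R)$-modules, a Schur argument identifies this centralizer with the block-scalar endomorphisms of $\V=\R^{n+2,n+2}$ that are skew for the pairing, i.e.\ the line $\R\K$. In particular $\Ad_q\K=\K$ for $q\in Q$, which makes \eqref{eq-K} well defined; moreover $\K$ agrees modulo $\wt\p$ with $\eta'(H)$, where $H$ generates the one-parameter subgroup displayed on the left of \eqref{eq-P-Q}, so that $\k$ is the fundamental vector field of the right $P/Q$-action on $\wt M\cong\G/Q$ (using $Q\trianglelefteq P$). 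This action is free, hence $\k$ vanishes nowhere; and a one-line computation with the quadratic form \eqref{eq-feff-form}, using $ce\det D=1$ from \eqref{eq-Q-P}, shows that every $p\in P$ rescales \eqref{eq-feff-form} by a positive factor. Thus the $P/Q$-action preserves the conformal class on $\wt M$, so $\k$ is a nowhere vanishing conformal Killing field. (This last part does not in fact use integrability; the hypothesis serves only to identify this conformal structure with the one underlying the normal Cartan connection $\wt\om$.)
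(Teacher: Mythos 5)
This proposition is quoted in the paper without proof --- it is attributed to \cite[Theorems 2.5 and 3.1]{Cap2008a} and \cite[Section 3.6]{Hammerl2017}, and the surrounding text only remarks that ``the normality issues are typically subtle, while the fact that $\k$ is an infinitesimal conformal symmetry follows easily from the construction.'' So there is no internal proof to compare against; I can only assess your argument on its own terms. The parts you actually carry out are correct: the identity $\wt\Om=\eta'\o\Om$ along $\G\subset\wt\G$ and the resulting identification of curvature functions are right; the implication ``$\wt\om$ normal $\Rightarrow$ $\wt\om$ torsion-free $\Rightarrow$ $\om$ torsion-free $\Rightarrow$ integrable'' is sound (the last step is the paper's Proposition from \cite[Section 4.2.3]{Cap2009}, which you may as well cite directly rather than re-deriving via harmonic curvature); and your treatment of $\k$ --- the Schur-type identification of $\R\K$ as the centralizer of $\eta'(\g)$ in $\wt\g$, hence the $Q$-invariance making \eqref{eq-K} well defined, the identification with the fundamental vector field of the free $P/Q$-action, and the check that $\Ad_{p^{-1}}$ preserves \eqref{eq-feff-form} up to positive scale --- is complete and consistent with the paper's Remark \ref{rem-feff}(ii).

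The genuine gap is exactly where you flag it: the implication ``integrable $\Rightarrow$ $\wt\om$ normal'' is not proved. Your proposed mechanism --- that $\wt\del^*\o\,\eta'$ and $\eta'\o\del^*$ differ by a $P$-equivariant operator ``supported on the torsion part'' of the argument --- is stated, not established, and it is precisely the content of the computation in \cite{Cap2008a} and \cite{Hammerl2017}. As written it is not even clearly the right formulation: the two codifferentials are built from different Killing-form dualities, $(\g/\p)^*\cong\p_+$ versus $(\wt\g/\wt\p)^*\cong\wt\p_+$, and $\p_+$ does not sit inside $\wt\p_+$ under $\eta'$ (only $\q\cap\p_+$ does), so one must also track where $\ka$'s values in $\g_0\oplus\p_+$ land relative to the conformal grading of $\wt\g$; whether the discrepancy vanishes may use $\del^*\ka=0$ in addition to torsion-freeness, not torsion-freeness alone. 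Since you ultimately defer this to the same two references the paper cites, your proposal does not supply a proof of the proposition beyond what the paper already does --- it supplies a correct proof of the easy direction and of the final assertion, plus a plausible but unverified outline of the hard direction.
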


Since $\k$ is nowhere vanishing, there exists a metric in the conformal class for which $\k$ is a true Killing field.
This fact is made explicit in Section \ref{LC-feff-explicit} and then applied in Section \ref{sec-Kropina}.

\subsection{Explicit Fefferman metric for integrable LC structures} \label{LC-feff-explicit}
Here we merge the previous rough observations with a finer analysis of the normal Cartan connection associated to an integrable LC structure.
This yields an explicit local description of a representative metric from the Fefferman conformal class.

Throughout this section, let $U\subset M$ be a sufficiently small open subset.
With the notation as above, for  a section $\phi:U\to\G$ of the Cartan bundle projection, we denote the components of the corresponding gauge $\phi^*\om:TU\to\g$ by
\begin{align}
\begin{pmatrix}
\om^0_0 & \om^0_j & \om^0_{n+1} \\
\om^i_0 & \om^i_j & \om^i_{n+1} \\
\om^{n+1}_0 & \om^{n+1}_j & \om^{n+1}_{n+1}
\end{pmatrix} ,
\label{eq-gauge}
\end{align}
where we adopt the conventions from \eqref{eq-LC-g} again.
Note that $\sum\limits_{r=0}^{n+1} \om^r_r =0$.
Without any loss of generality, one can always choose a local section $\phi$ so that the $\g_-$-part of \eqref{eq-gauge} is formed by an adapted coframe \eqref{eq-coframe}, 
\begin{align}
\om^{n+1}_0=\si,\quad \om^i_0=\th^i,\quad \om^{n+1}_j=\pi_j , 
\label{eq-gauge0}
\end{align}
cf. \cite[Lemma 2.8]{Doubrov2020}.
The remaining components of \eqref{eq-gauge} are, in principle, deducible from the normality condition and expressible in terms of the defining functions  $f_{ij}$ and their derivatives.
Passing from $M$ to $\wt{M}$, the Fefferman metric is schematically given by \eqref{eq-feff-form}.
This shows that only part of the $\g_0$-block of the Cartan gauge is needed.

The previous setting refers to some coordinates $(x^i,u,p_i)$ on $U$.
An additional fiber coordinate on the Fefferman space $\wt{M}$ is introduced as follows.
Composing the gauge section $\phi:U\to\G$ with the bundle projection $\G\to\wt{M}$, we get a local section of the Fefferman fibration $\wt{M}\to M$. 
This is a preferred (but not canonical) section that is fully determined by the previous choices. 
Any other element of $\wt{M}$ over $U$ can be related to the image of this section by the action of the group $P/Q\cong\R\times\Z_2$, which is described in \eqref{eq-P-Q}.
This induces the coordinate $s\in\R$ on (each of the two continuous components of) the fiber. 
With this setup, we derive the local formula for a representative metric of the Fefferman conformal class: 

\begin{theorem} \label{prop-Feff-integrable}
Let $\D=E\oplus F$ be an integrable LC structure on a manifold $M$ of dimension $2n+1$ and let 
\begin{align*}
\si=\d u -p_i \d x^i, \quad
\th^i=\d x^i, \quad
\pi_i=\d p_i -f_{ij}\d x^j
\end{align*}
be an adapted coframe, in a local coordinate $(x^i,u,p_i)$, such that $E=\ker\<\si,\pi_i\>$ and $F=\ker\<\si,\th^i\>$.
Let $[\wt{g}]$ be the induced conformal structure on the Fefferman space $\wt{M}$ and let $s$ be the induced fiber coordinate of the projection $\wt{M}\to M$.
Then a representative metric from the conformal class has the form
\begin{align}
\wt{g} = \theta^i\odot \pi_i + \sigma\odot \varpi ,
\label{eq-Feff-integrable}
\end{align}
where
\begin{align}
\varpi = \frac1{n+2} \sum_{i,j=1}^{n} \left( - \frac1{n+1}\frac{\del^2 f_{ij}}{\del p_i\del p_j}\,\si - 2\frac{\del f_{ij}}{\del p_i}\,\th^j  \right)+ 2\d s . 
\label{eq-varpi}
\end{align}
\end{theorem}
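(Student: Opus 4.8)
The plan is to work entirely at the level of Cartan gauges. By Proposition~\ref{prop-feff-rough} the Fefferman metric corresponds to the quadratic form $Y^tX+2zw$ on $\g/\q$, so in terms of the gauge components \eqref{eq-gauge} a representative metric on $\wt M$ is $\om^i_0\odot\om^{n+1}_i + 2\,\om^{n+1}_0\odot\om^0_{n+1}'$, where $\om^0_{n+1}'$ is the pullback of the appropriate $\g_1^F$-component to $\wt M$ (not to $M$) — i.e.\ it differs from the base gauge by a correction coming from the fiber direction and the $Q$-reduction. With the normalization \eqref{eq-gauge0} the first summand is already $\th^i\odot\pi_i$, so the whole content of the theorem is the identification of the one-form $\varpi$ with (twice) that $\g_1^F$-component, lifted to $\wt M$. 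The strategy has three ingredients: (i) compute the relevant $\g_0$- and $\g_1^F$-components of the \emph{normal} Cartan connection of an integrable LC structure in the adapted gauge \eqref{eq-gauge0}, in terms of the $f_{ij}$; (ii) track how passing from $\G/Q=\wt M$ to a section contributes the $2\,\d s$ term via the explicit $P/Q$-action \eqref{eq-P-Q}; (iii) assemble and read off \eqref{eq-varpi}.

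For step (i) I would impose the normality condition on the curvature \eqref{eq-curvature} of the gauge \eqref{eq-gauge}, using that for this parabolic geometry normality forces regularity and, by integrability, torsion-freeness (the $\g_{-1}$-valued part of $\Om$ vanishes). Writing out $\d\om^r_s + \om^r_t\wedge\om^t_s$ in the coframe $(\si,\th^i,\pi_i)$ and using $\d\si=\th^i\wedge\pi_i$, $\d\th^i=0$, $\d\pi_i=-\d f_{ij}\wedge\th^j$, one solves order by order (by homogeneity) for the unknown components $\om^0_0,\om^i_j,\om^0_j,\om^i_{n+1},\om^0_{n+1}$. The key normalization identities are that the $\p_+$-valued parts of $\Om$ lie in $\ker\del^*$; in practice this pins down the trace parts. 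The component one ultimately needs — call it the $\g_1^F$-block $\om^0_j$ relative to the chosen slot, which up to normalization is what enters \eqref{eq-feff-form} as the partner of $\si$ — will come out as a combination of $\th^j$ and $\si$ with coefficients built from $\partial f_{ij}/\partial p_i$ and $\partial^2 f_{ij}/\partial p_i\partial p_j$. Here I expect the Ricci/Rho-tensor-type pieces of the normal connection to produce exactly the factors $-\tfrac1{n+1}$ and $\tfrac1{n+2}$; matching these constants is the delicate bookkeeping. One can cross-check the whole computation against the explicit $n=1$ formulas of Nurowski--Sparling / Bor--Wilse promised in Remark~\ref{rem-feff}.

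For step (ii), recall that the gauge section $\phi:U\to\G$ composed with $\G\to\wt M$ gives the preferred section of $\wt M\to M$, and any other point over $U$ is reached by the $P/Q$-action \eqref{eq-P-Q} with parameter $s$. Pulling the $\g/\q$-valued "soldering" part of $\wt\om$ back along $\exp$ of the diagonal generator in \eqref{eq-P-Q} and using the gauge-change formula \eqref{eq-gauge-change} with $p^*\om_P$ evaluated on that one-parameter group, the Maurer--Cartan term $p^*\om_P$ contributes a $\d s$ along the fiber; the normalization of the $Q$-invariant inner product then fixes its coefficient as $2$ (matching the $2zw$ in \eqref{eq-feff-form} with the normalization of the generator). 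Assembling (i) and (ii), the partner one-form of $\si$ in \eqref{eq-Feff-integrable} is precisely $\varpi$ as in \eqref{eq-varpi}, and the $\th^i\odot\pi_i$ term is unchanged; since rescaling the gauge by \eqref{eq-coframe-change} multiplies $\wt g$ by a positive function, the formula genuinely represents the conformal class, which completes the proof.

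The main obstacle I anticipate is step (i): honestly solving the normality equations for enough of the $\g_0$- and $\g_1$-parts of the normal Cartan connection in these coordinates, and in particular getting the numerical coefficients $-\tfrac1{n+1}$ and $\tfrac1{n+2}$ right. This is a finite but genuinely involved computation — it is the one place where "only part of the $\g_0$-block is needed" still requires carefully isolating which part and computing it. Everything else (the $\th^i\odot\pi_i$ term, the $2\,\d s$, conformal invariance under \eqref{eq-coframe-change}) is comparatively routine given the setup already established in the excerpt.
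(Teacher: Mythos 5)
Your overall architecture (impose normality on the adapted gauge, extract the partner one--form of $\si$, then pick up the $2\,\d s$ from the $P/Q$--action via \eqref{eq-gauge-change} and \eqref{eq-P-Q}) is exactly the paper's, and your steps (ii)--(iii), including the conformal--invariance remark, are fine. But step (i) targets the wrong object, and this is a genuine gap. In \eqref{eq-feff-form} the quadratic form on $\g/\q$ is $Y^tX+2zw$, where $w$, $X$, $Y$ are the $\g_{-2}$-, $\g_{-1}^E$-, $\g_{-1}^F$-components and $z$ is the coefficient of the \emph{diagonal} $\g_0$-direction complementary to $\q$ in $\p$ (it appears in both corners of the displayed matrix). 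Hence the partner of $\si=\om^{n+1}_0$ is the trace part $\varpi=\om^0_0+\om^{n+1}_{n+1}=-\sum_i\om^i_i$ of the $\g_0$-block of the gauge. It is \emph{not} a $\g_1^F$- or $\g_2$-component: the slots $\om^0_j$, $\om^i_{n+1}$, $\om^0_{n+1}$ you invoke all lie in $\p_+\subset\q$, are quotiented out of $\g/\q\cong T\wt M$, and therefore cannot enter the soldering form or the metric at all. Your proposed expression $2\,\om^{n+1}_0\odot\om^0_{n+1}$ is not even well defined as a tensor on $\wt M$.

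This misidentification would derail the computation, not merely relabel it: solving the normality equations for the $\g_1$- and $\g_2$-components is where the genuinely heavy Rho-tensor bookkeeping lives, and it would produce expressions involving $x$- and $u$-derivatives of $f_{ij}$, not the formula \eqref{eq-varpi} (whose coefficients involve only $p$-derivatives --- a telltale sign that only low-homogeneity data is needed). The correct, and much lighter, version of your step (i) is: use that integrability forces the curvature into $\g_0^{ss}\oplus\g_1\oplus\g_2$, expand the resulting conditions $\Om^{n+1}_0=\Om^i_0=\Om^{n+1}_j=\Om^0_0=\Om^{n+1}_{n+1}=\sum_i\Om^i_i=0$ via \eqref{eq-K-gauge}, and solve only for the diagonal terms of the $\g_0$-block; this yields the two summands of \eqref{eq-varpi} with the constants $-\tfrac1{n+1}$ and $\tfrac1{n+2}$. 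With that correction your plan coincides with the paper's proof.
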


\begin{proof}
Let $\psi=q\o\phi:U\to\wt{M}$ be the preferred section of the Fefferman projection, where $q:\G\to\wt{M}$.
Along the image of $\psi$, the form of the metric in \eqref{eq-Feff-integrable} follows from \eqref{eq-feff-form} and the indicated substitutions, where
\begin{align*}
\varpi=\om^0_0+\om^{n+1}_{n+1}=-\sum\limits_{i=1}^{n} \om^i_i
\end{align*}
is the only term to be specified.
For integrable LC structures, it follows that the curvature $\Om$ of the normal Cartan connection takes values in the $P$-submodule $\g_0^{ss}\oplus\g_1\oplus\g_2\subset\g$, where $\g_0^{ss}$ is the semisimple part of $\g_0$, see \cite[Section 3.8]{vcap2009}.
Denoting the components of $\phi^*\Om$ as in \eqref{eq-gauge}, we have 
\begin{align}
\Om^r_s=\d\om^r_s+\om^r_t\wedge\om^t_s , 
\label{eq-K-gauge}
\end{align}
where $0\le r,s,t\le n+1$.
The previous restriction on the value of $\Omega$ implies
\begin{align}
\Om^{n+1}_0=\Om^i_0=\Om^{n+1}_j = \Om^0_0=\Om^{n+1}_{n+1}=\sum\limits_{i=1}^n \Om^i_i=0 ,
\label{eq-KKK}
\end{align}
for $1\le i,j\le n$.
By expanding and inspecting the conditions \eqref{eq-KKK} using \eqref{eq-K-gauge}, one gradually gets specifications on the components $\om^r_s$.
Typically, this is a tedious process. 
Since we are only concerned with (some of) the diagonal terms, we end up relatively quickly with the formula
\begin{align*}
\varpi = \frac1{n+2} \sum_{i,j=1}^{n} \left( - \frac1{n+1}\frac{\del^2 f_{ij}}{\del p_i\del p_j}\,\si - 2\frac{\del f_{ij}}{\del p_i}\,\th^j \right) . 
\end{align*}
The same outcome can also be deduced from \cite[Section 2.5]{Doubrov2020}, where similar reasoning (with different aims) is presented in detail.
See, in particular, equations (2.6) and (2.12) in that reference.

To obtain the general formula of $\wt{g}$ outside the image of $\psi$, we examine how the gauge components change along the fibration $\wt M\to M$.
This is controlled by the action of elements from \eqref{eq-P-Q}, whose substitution into the transformation formula \eqref{eq-gauge-change} gives, after a simple calculation, the expressions \eqref{eq-Feff-integrable} and \eqref{eq-varpi}.
\end{proof}

\begin{remarks} \label{rem-feff}
(i)
The conformal invariance of \eqref{eq-Feff-integrable} follows by the very construction.
It can also be checked directly by passing to another adapted coframe, expanding the key conditions \eqref{eq-KKK} according to \eqref{eq-K-gauge} and repeating the described procedure.
With the choice as in \eqref{eq-coframe-change}, the rescaled metric changes by the factor $e^{2f}$.

(ii)
The conformal Killing field $\k$, introduced abstractly in \eqref{eq-K}, is described in current local coordinates as 
$\k = \frac{\del}{\del s}$.
It is easy to see that this is a true Killing field of the representative metric \eqref{eq-Feff-integrable}.

(iii)
The formula \eqref{eq-Feff-integrable} involves only the $\g_-$-part and the $\g_0$-part of the normal Cartan gauge determined by a choice of adapted coframe. 
It can therefore be interpreted in terms of exact Weyl connections, the distinguished affine connections preserving the LC structure and the contact form.
To compare with the classical Fefferman construction for CR structures, we refer primarily to \cite{Lee1986} where the formulas are derived via another distinguished class of connections, namely, the Webster connections.
Both the Weyl and the Webster connections are defined for any parabolic contact structure and they can be related in a rather convenient way, see \cite[Sections 5.3.12--14]{Cap2009}.
This provides a hint to other potential reformulations of \eqref{eq-Feff-integrable}. 
Still, we regard our present approach to be the most straightforward.

(iv)
LC structures of dimension 3, i.e. for $n=1$, are automatically integrable and their normal Cartan  curvature is much easier to analyze than for $n>1$.
However, the expression \eqref{eq-Feff-integrable} is unchanged, only simplified and it can already be found in \cite{Nurowski2003} and \cite{Bor2022}.
More details on this special case are in Section \ref{sec-dim3}.
\end{remarks}

\section{Canonical curves} \label{sec:curves}
The aims of this section are as follows. 
Firstly, we give an elementary interpretation of chains and null-chains in the homogeneous model based on Section \ref{sec-LC-model}.
Referring to this description and the notion of development of curves, we then specify the correspondence between (null-)chains on a LC manifold and null-geodesics of the conformal Cartan bundle obtained from the Fefferman construction.
In the case the LC structure is integrable, the latter curves are just the null-geodesics of the Fefferman metric.
This, in particular, leads to the description of chains as the extremal curves of the Kropina functional in Theorem \ref{prop-Kropina}.
Before we go into the details, we first recall some general theory.

\subsection{Preliminaries} \label{curves-prelim}
There are several ways to define, and hence study, canonical curves in particular geometries.
In the framework of Cartan geometries, general definitions reflect the model situation.
In the homogeneous model, $G\to G/P$, the canonical curves are the projections of shifted 1-parameter subgroups of the principal group $G$.
I.e., they are the curves parametrized as $t\mapsto g\exp(tX)/P$, where $g\in G$, $X\in\g$ and $t\in\R$.
According to the type of the generator $X\in\g$, various types of canonical curves can be categorized.
For parabolic Cartan geometries, we restrict to the negative part of the grading $\eqref{eqn:parabolic grading alg}$ of $\g$.
The types of curves are then distinguished by subsets $S\subseteq\g_-$ which
(in order for the concept to have an invariant geometric meaning)
are assumed to be $G_0$-invariant, where $G_0\subset P$ is the Lie group with the Lie algebra $\g_0\subset\p$.

A generalization of the notion of canonical curves to general Cartan geometries $(\G\to M,\om)$ of type $(G,P)$ can be done as follows.
Let us consider the associated bundle $\mc{S}:=\G\times_P G/P$ over $M$ with its canonical section 
corresponding to the origin in $G/P$.
Along this section, the vertical subbundle is naturally identified with $\G\times_P \g/\p$, i.e. the tangent bundle $TM$.
This makes precise the intuitive idea of ``osculating'' $M$ at each point by the model space $G/P$, the idea that may be spotted already in Cartan's work.
The Cartan connection on $\G$ induces (after a $G$-extension) a connection on $\mc{S}$, which allows one to define the \textit{development of curves} on $M$ into the homogeneous space $G/P$.
More specifically, for a curve $c:I\to M$ and its canonical lift $\hat c:I\to\mc S$,
one may parallel transport the points $\hat c(t)$, for varying $t\in I$, into a chosen fiber over $x=c(t_0)$.
Locally, this draws a curve in $\mc S_x\cong G/P$ passing through the origin. 
For fixed choices, this construction provides a bijection between germs of curves through $x$ in $M$ and germs of curves through origin in $G/P$ that preserves the order of contact.
This is an important tool for  studying curves on $M$ via their counterparts in $G/P$.

\begin{definition} \label{def-curves}
Let $(\G\to M,\om)$ be a parabolic geometry of type $(G,P)$ and $S\subseteq\g_{-}$ be a $G_0$-invariant subset. 
A curve in $M$ is called a \emph{canonical curve of type $S$} if it admits a development into the curve in $G/P$ of the form 
$$t\mapsto\exp(tX)/P,$$ 
where $X\in S$ and $t\in I$.
Canonical curves (of certain type) of a parabolic geometric structure are the canonical curves of the corresponding regular and normal parabolic geometry.
\end{definition}

Equivalently, canonical curves are projections of flow lines of constant vector fields from the principal Cartan bundle.
More precisely, a curve $c:I\to M$ is a canonical curve of type $S$ if and only if it admits a lift $\bar{c}:I\to\G$ such that the value of $\om\left(\frac{d}{dt} \bar{c}(t)\right)$ is constant, for all $t\in I$, and belongs to $S$.

By definition, canonical curves are endowed with families of admissible reparametrizations, which  turn out to be either projective or affine.
Usually, we consider the curves as unparametrized ones.
Details on the the general theory of canonical curves, as well as particular applications, can be found in \cite{Cap2004} or \cite[Section 5.3]{Cap2009}.

\subsection{Model interpretations} \label{curves-model}

Here we describe chains and null-chains in the LC homogeneous model, referring to the observations from Section \ref{sec-LC-model}.
For each type of canonical curves, tangent vectors in any point are of the corresponding type, respectively.
They are classified in Lemma \ref{lem:TOM}, which implicitly enters the discussion that follows.
Results of this section are used later in Section \ref{curves-correspondence}, but they are also of interest on their own.
Primarily, we give an interpretation in terms of the model ``para-hyperquadric'', as commented in Remark \ref{rmk:model 2.5}, and its intersections with appropriate subspaces.
This is the closest analogue of well-known descriptions of canonical curves in model CR and conformal geometry, see e.g. \cite{Eastwood2020} for a survey and references.
We also offer an underlying projective interpretation.
This extends notably the only attempt in this direction we are aware of, \cite[Proposition 4.1]{Bor2022}, where a description of model chains in the 3-dimensional case is given.

By definition, \textit{chains} of a LC parabolic geometry are the canonical curves of type $S=\g_{-2}$, cf. the grading description in \eqref{eq-LC-g}.
As unparametrized curves, chains are uniquely given by a tangent direction in one point, provided it is transverse to the contact distribution.
Chains carries a natural projective family of parametrizations.

\begin{prop} \label{prop:model-chain}
Let $M$ be the homogeneous model of LC structure of dimension $2n+1$.
\begin{enumerate}[(a)]
\item \label{chain-a}
Interpreting $M$ as the para-complex projectivization of the null-cone $\N\subset\R^{n+2,n+2}$, 
take a para-complex plane $U\subset\R^{n+2,n+2}$ that is non-degenerate.
Then the para-complex projectivization of $\N\cap U$ is a chain in $M$ and all chains are of this form.
\item \label{chain-b}
Interpreting $M$ as the set of incident pairs of points and hyperplanes in real projective space $\R\P^{n+1}$,
take a disjoint pair of line $\ell$ and subspace $L$ of codimension 2 in $\R\P^{n+1}$.
Then the set of all incident pairs $(p,H)\in M$ such that $p\in\ell$ and $H\supset L$ is a chain in $M$ and all chains are of this form.
\end{enumerate}
\end{prop}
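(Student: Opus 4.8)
The plan is to work entirely with the homogeneous description. By Definition \ref{def-curves}, a chain through the origin $O$ is the projection of $t\mapsto\exp(tX)/P$ with $X\in\g_{-2}$, and since $G$ acts transitively on $M$ preserving chains, it suffices to describe this one curve in each of the two pictures and then translate by $G$. A generator of $\g_{-2}$ has the matrix form $\spmat{0&0&0\\0&0&0\\w&0&0}$ with $w\ne 0$, and its exponential is $I + tw\,E_{n+1,0}$, which simply adds $tw\,e_{n+1}$ to the first column. Thus the lifted curve in $G$ sends $e_0\mapsto e_0+tw\,e_{n+1}$ and fixes $e^{n+1}\mapsto e^{n+1}$, while acting trivially on $\<e_1,\dots,e_n\>$ and on $\<e^1,\dots,e^n\>$. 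In the flag picture \eqref{LC-model-a}, the chain is therefore $t\mapsto \big(\<e_0+tw\,e_{n+1}\>,\ \<e_0+tw\,e_{n+1},e_1,\dots,e_n\>\big)$.

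For part \eqref{chain-a}: in the para-Hermitian picture $M$ is the set of para-complex null-lines in $\V=\R^{n+2,n+2}$, with $v_+\in\V_+\cong\R^{n+2}$ and $v_-\in\V_-\cong\R^{(n+2)*}$. Along the chain, $v_+=e_0+tw\,e_{n+1}$ varies in the $2$-plane $\<e_0,e_{n+1}\>\subset\V_+$ while $v_-=e^{n+1}$ is constant. The para-complex plane picked out is $U:=\<e_0,e_{n+1},e^{n+1}\>_{\K}=\<e_0,e_{n+1}\>\oplus\<e^{n+1}\>\oplus\K$-hull; more precisely $U=\<e_0,e_{n+1},e^0,e^{n+1}\>$, which is $\K$-invariant, has real dimension $4$, and — by the Gram-matrix computation already carried out in the proof of Lemma \ref{lem:TOM}, case (d) — is non-degenerate of split signature. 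One checks that $\N\cap U$ consists exactly of the null para-complex lines $\<v_+,v_-\>$ with $v_+\in\<e_0,e_{n+1}\>$, $v_-\in\<e^0,e^{n+1}\>$ and $v_-\cdot v_+=0$; projectivizing para-complexly recovers precisely the chain above. Conversely, the stabilizer in $G$ of the tangent direction $X\in\g_{-2}$ acts transitively on such $U$'s containing $O$, so every non-degenerate para-complex plane arises, and every chain is obtained this way. The main point to verify carefully is the matching of the transversality classification of Lemma \ref{lem:TOM}(d) with the non-degeneracy of $U$: the tangent vector to the chain at $O$ is $X\in\g_{-2}$, and its associated subspace $W$ from Lemma \ref{lem:TOM} is exactly $U$, so non-degeneracy of $U$ is forced, and conversely a non-degenerate $U$ through $O$ determines a transverse direction, hence a unique chain.

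For part \eqref{chain-b}: forgetting the para-complex structure, or equivalently using the double fibration \eqref{dia-double-fiber}, $M$ is the incident pairs $(p,H)$ in $\R\P^{n+1}=\P(\V_+)$. Along the chain, $p=[e_0+tw\,e_{n+1}]$ traces the projective line $\ell:=\P\<e_0,e_{n+1}\>$, while $H=\P\<e_0+tw\,e_{n+1},e_1,\dots,e_n\>$ always contains the fixed codimension-$2$ subspace $L:=\P\<e_1,\dots,e_n\>$; note $\ell\cap L=\emptyset$ since $\<e_0,e_{n+1}\>\cap\<e_1,\dots,e_n\>=0$. Conversely, given a disjoint pair $(\ell,L)$, the set of incident $(p,H)$ with $p\in\ell$, $H\supset L$ is a $1$-parameter family (as $H$ is determined by $L$ together with $p\notin L$), and it equals $g\cdot(\text{the model chain})$ for a suitable $g\in G$ because $G=SL(n+2,\R)$ acts transitively on disjoint (line, codimension-$2$-subspace) pairs. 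The expected obstacle here is purely bookkeeping: confirming that the map sending $p\in\ell$ to the unique hyperplane $H=\overline{L\vee p}$ is a well-defined projective parametrization of the chain and that its development is the straight line $t\mapsto\exp(tX)/P$ — this last point follows from the general theory in \cite[Section 5.3]{Cap2009} once the curve is recognized as the $G$-orbit of a $1$-parameter subgroup generated in $\g_{-2}$, which is exactly what the explicit exponential computation above exhibits. I would close by remarking that the two descriptions are related by the double fibration \eqref{dia-double-fiber}, consistent with Remark \ref{rmk:model 2.5}.
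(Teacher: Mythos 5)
Your overall strategy is exactly the paper's: reduce by homogeneity to the single model chain $c(t)=\exp(tA)/P$ with $A\in\g_{-2}$ through the origin $O=\<e_0,e^{n+1}\>$, compute it explicitly in each picture, and read off $U$, respectively $(\ell,L)$. Part \eqref{chain-b} is essentially identical to the paper's argument and is fine. Part \eqref{chain-a}, however, contains a concrete computational error: $\exp(tA)$ does \emph{not} fix $e^{n+1}$ under the dual action on $\V_-\cong\R^{(n+2)*}$; it sends $e^{n+1}$ to $e^{n+1}-tw\,e^0$ (equivalently, the annihilator of the moving hyperplane $\<e_0+tw\,e_{n+1},e_1,\dots,e_n\>$ is spanned by $e^{n+1}-tw\,e^0$, which is already implicit in your own part \eqref{chain-b}). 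So along the chain $v_-$ is \emph{not} constant, and the curve is $\<e_0+tw\,e_{n+1},\ e^{n+1}-tw\,e^0\>$ as in the paper's \eqref{eq-chain-a}. As you state it, the ``chain'' is not even a curve in $M$: for $t\ne0$ one has $(e_0+tw\,e_{n+1})\.e^{n+1}=tw\ne0$, so $\<v_+,e^{n+1}\>$ is not a null para-complex line. Moreover, the $\K$-hull of $\<e_0,e_{n+1},e^{n+1}\>$ is that same $3$-dimensional degenerate space, so your stated recipe does not produce the $4$-dimensional non-degenerate $U=\<e_0,e_{n+1},e^0,e^{n+1}\>$ that you then write down; the correct justification is $U=O+\op{im}(A)$ with the corrected $v_-$-component.

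Once the dual action is repaired, the remainder of \eqref{chain-a} goes through and coincides with the paper's proof: your characterization of the para-complex null-lines in $U$ as $\<v_+,v_-\>$ with $v_\pm\in U\cap\V_\pm$ and $v_+\.v_-=0$ correctly excludes the second ruling of $U$ by real null-planes (the paper makes this explicit via the $\al$- versus $\be$-plane dichotomy), and the matching with case (d) of Lemma \ref{lem:TOM} is exactly the paper's. One further small point on the converse in \eqref{chain-a}: what is needed is that $G$ acts transitively on non-degenerate para-complex planes (equivalently, that $P$ acts transitively, up to scale, on directions transverse to $\D$ at $O$), not that the stabilizer of a fixed direction $X\in\g_{-2}$ acts transitively on the admissible $U$'s containing $O$ --- for a fixed transverse direction $w$ the plane $O+\op{im}(w)$ is unique, so that stabilizer moves nothing.
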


\begin{proof}
Dealing with the homogeneous model $M=G/P$, it suffices to analyze the situation in a particular point and a particular direction transverse to the contact distribution.
Let $A=\spmat{0&0&0\\0&0&0\\1&0&0}\in\g_{-2}$ and $c(t)=\exp(tA) / P$ be the chain passing through the origin in $G/P$ in the direction of $A/\p\in\g/\p$, i.e.
\begin{align*}
c(t)=\pmat{1&0&0\\0&\mbb{I}&0\\t&0&1} \big/ P ,
\end{align*}
where $\mbb{I}$ is the identity matrix of rank $n$.

Under the interpretation \eqref{chain-a}, the origin and the tangent vector correspond to $O=\<e_0,e^{n+1}\>$ and  $A = e^0\otimes (e_{n+1}/O) - e_{n+1}\otimes (e^0/O)\in O^*\otimes\V/O$, respectively, cf. the identification \eqref{eq-TOM}.
The chain itself corresponds to 
\begin{align}
c(t)=\<e_0+t e_{n+1},-t e^0+e^{n+1}\> .
\label{eq-chain-a}
\end{align}
The chain \eqref{eq-chain-a} consists of para-complex null-lines contained in the subspace 
\begin{align}
U = O+\op{im}(A) = \<e_0,e_{n+1},e^0,e^{n+1}\> ,
\label{eq-2-2}
\end{align}
which is evidently a non-degenerate para-complex plane.
Conversely, there are two 1-parameter families of real null-planes in $U$ (known as $\al$- and $\be$-planes, respectively).
One of them is the chain \eqref{eq-chain-a}, the other can be parametrized as
$\<e_0+s e^{n+1},-s e^0+e_{n+1}\>$.
However, members of the latter family are not para-complex lines.
Hence the intersection $\N\cap U$ is just the chain \eqref{eq-chain-a}.

Under the interpretation \eqref{chain-b}, the chain \eqref{eq-chain-a} corresponds to the curve of incident pairs
\begin{align}
p(t)=\<e_0+t e_{n+1}\>, \quad H(t)=\<\ker (-t e^0+e^{n+1})\>
\label{eq-chain-b}
\end{align}
of points and hyperplanes in $\R\P^{n+1}$.
The points $p(t)$ form a line $\ell$ and the hyperplanes $H(t)$ have a common intersection $L$, namely, the projectivization of the subspace $\<e_1,\dots,e_n\>\in\R^{n+2}$.
Clearly, $\ell$ and $L$ are disjoint and the dimension of $L$ is $n-1$.
The converse statement follows from the previous interpretation.
Directly, given $\ell$ and $L$, any $p\in\ell$ determines an incident hyperplane $H=p+L$.
Such family allows a parametrization as in \eqref{eq-chain-b}, which is a chain.
\end{proof}

Any chain in $M$ can always be parametrized analogously to \eqref{eq-chain-a}. 
In particular, both components of the intersections with $\pm1$-eigenspaces are (standardly parametrized) affine lines.
This pair of lines, however, is not arbitrary. 
This detail is to be compared with an upcoming discussion for null-chains.

By definition, \textit{null-chains} of a LC parabolic geometry are the canonical curves of type $S\subset\g_{-1}$, where $S$ is $\g^E_{-1}$, $\g^F_{-1}$ or the set of generic null-vectors in $\g_{-1}$, i.e. those null-vectors that are not contained in $\g^E_{-1}\cup\g^F_{-1}$. 
Accordingly, we distinguish null-chains of type $E$, $F$ and the generic ones.
As a parametrized curve, any null-chain is uniquely given by its 2-jet and carries a natural projective family of parametrizations.
As unparametrized curves, null-chains of type $E$ or $F$ are uniquely given by a tangent direction in one point, whereas there is a 1-parameter family of generic null-chains obeying an analogous initial condition, cf. \cite[Proposition 5.3.8]{Cap2009}.
We primarily focus on generic null-chains and treat the other types as degenerate cases:

\begin{prop} \label{prop:model-null-chain}
Let $M$ be the homogeneous model of LC structure in dimension $2n+1$.
\begin{enumerate}[(a)]
\item \label{nchain-a}
Interpreting $M$ as the para-complex projectivization of the null-cone $\N\subset\R^{n+2,n+2}$, 
take a null para-complex plane $U\subset\N$ and its intersections $U_\pm:=U\cap\V_\pm$ with the eigenspaces of the para-complex structure on $\R^{n+2,n+2}$.
Then any pair of real affine lines in $U_+$ and $U_-$ determines a generic null-chain in $M$ and all generic null-chains are of this form.
Null-chains of type $E$, respectively $F$, are degenerate cases of generic ones, where 
$\dim_\R U=3$ and $\dim_\R U_+=1$, respectively $\dim_\R U_-=1$.
\item \label{nchain-b}
Interpreting $M$ as the set of incident pairs of points and hyperplanes in real projective space $\R\P^{n+1}$,
take a line $\ell$ incident with a subspace $L$ of codimension 2 in $\R\P^{n+1}$.
Then a set of incident pairs $(p,H)\in M$ such that $p\in\ell$ and $H\supset L$ is a generic null-chain in $M$ and all generic null-chains are of this form.
Null-chains of type $E$, respectively $F$, are degenerate cases of generic ones, where all hyperplanes $H\supset L$, respectively all points $p\in\ell$, coincide.
\end{enumerate}
\end{prop}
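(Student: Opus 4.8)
The plan is to mimic the proof of Proposition~\ref{prop:model-chain}, with the single $\g_{-2}$-generator replaced by a generic null element of $\g_{-1}$. By homogeneity of $M=G/P$ it suffices to analyse one null-chain through the origin $O=\langle e_0,e^{n+1}\rangle$ and then transport by $G$. I would take the generic null vector $X=\spmat{0&0&0\\ X^i&0&0\\ 0&Y_j&0}\in\g_{-1}$, so $Y^tX=0$ and $X,Y$ are not both zero. Since the only possibly nonzero entry of $X^2$ is $Y^tX$, we get $X^2=0$, hence $\exp(tX)=\mbb{I}_{n+2}+tX$, so that the null-chain $\exp(tX)/P$ is the explicit curve
\begin{equation*}
c(t)=\pmat{1&0&0\\ tX^i&\mbb{I}&0\\ 0&tY_j&1}\big/ P ,
\end{equation*}
which, via the identification \eqref{eq-TOM} and the dual action of $G$, becomes the curve of para-complex null lines $c(t)=\langle e_0+tX^ie_i,\ e^{n+1}-tY_je^j\rangle$.

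Next I would read off the ambient data. All lines $c(t)$ lie in $U:=O+\op{im}(X)=\langle e_0,e^{n+1},X^ie_i,Y_je^j\rangle$, which for a generic null $X$ has real dimension $4$ and, by Lemma~\ref{lem:TOM}(b), is null; thus $U$ is a null para-complex plane with $U_+=\langle e_0,X^ie_i\rangle$ and $U_-=\langle e^{n+1},Y_je^j\rangle$, as in part~(a). Under the dictionary of Section~\ref{sec-LC-model} between para-complex null lines and incident point--hyperplane pairs, $c(t)$ is exactly the curve whose point-component $t\mapsto e_0+tX^ie_i$ traces an affine line in $U_+$ while its hyperplane-component $\ker(e^{n+1}-tY_je^j)$ moves in the pencil determined by the affine line $t\mapsto e^{n+1}-tY_je^j$ in $U_-$; nullity of $U$ forces $v_-\cdot v_+=0$ throughout, so the pairs are genuinely incident. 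Projectivising $U_+$ and $U_-$ then yields the line $\ell$ and the codimension-$2$ subspace $L$ of part~(b), incident precisely because $U$ is null. For the converse I would invoke transitivity: $G$ acts transitively on null para-complex planes, and the stabiliser of one acts transitively on the relevant configurations (a pair of affine lines inside $U_+$ and $U_-$, equivalently the incident pair $(\ell,L)$ together with the data synchronising the two parametrisations), so every such configuration is the image of a shifted one-parameter subgroup, hence a generic null-chain. The type-$E$ and type-$F$ null-chains arise by specialising the generator to $\g_{-1}^E$ or $\g_{-1}^F$, i.e.\ setting $Y=0$ or $X=0$: then $\op{im}(X)$ drops rank, $U$ becomes $3$-dimensional with one of $U_+,U_-$ reduced to a line, and one component of $c(t)$ is constant, so in the two cases all the hyperplanes, resp.\ all the points, coincide, as in part~(b).

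The step I expect to cost the most effort is the converse, just as for chains in Proposition~\ref{prop:model-chain}: one must verify that the shifted one-parameter subgroups produce exactly the asserted configurations and no others, and, unlike for chains, keep track of the one-parameter freedom of generic null-chains sharing a $1$-jet. Concretely, the plane $U$ is pinned down by the $1$-jet (it is $\op{im}(w)+L$ for the initial point $L$ and velocity $w$, cf.\ Lemma~\ref{lem:TOM}), yet the pair of affine lines inside $U_\pm$ realising a prescribed $2$-jet still moves in a one-parameter family; the clean way to organise this is a transitivity-and-dimension count for the $G$-action on these flag-type configurations, together with an honest description of which one-parameter subfamily of $\{(p,H):p\in\ell,\ H\supset L\}$ actually constitutes a null-chain.
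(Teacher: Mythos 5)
Your forward computation is sound and coincides with the paper's: the curve $\exp(tA)/P$ for a generic null $A\in\g_{-1}$ is $c(t)=\langle e_0+tX^ie_i,\ e^{n+1}-tY_je^j\rangle$, it lies in the null para-complex plane $U=O+\op{im}(A)$, and the degenerate types $E$, $F$ come from $Y=0$, resp.\ $X=0$. The gap is in the completeness direction, and you have correctly located it yourself without closing it. Unlike a chain, a generic null-chain is \emph{not} determined by its $1$-jet: through a given point in a given generic null direction there is a $1$-parameter family of unparametrized null-chains. Your single curve $c(t)$ realizes only one pair of affine lines in $U_\pm$ (the lines through $e_0$ and $e^{n+1}$ with directions $X^ie_i$ and $-Y_je^j$), so it proves neither that every pair of affine lines yields a null-chain nor that every generic null-chain traces such a pair. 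The paper closes this by explicitly acting with $\exp\p_+\subset P$ on the model null-chain, which produces the full family of null-chains sharing the $1$-jet of $c$, namely
\begin{equation*}
\hat c(t)=\langle (1+at)e_0+te_1,\ -te^n+(1+bt)e^{n+1}\rangle,\qquad a,b\in\R,
\end{equation*}
and then observes that the freedom in $(a,b)$ (together with projective reparametrizations, and with $(a,b)\sim(ka,kb)$ giving the same unparametrized curve) sweeps out exactly the asserted pairs of affine lines in $U_\pm$. This one computation simultaneously settles both inclusions and recovers the correct count of null-chains per initial direction.

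Your proposed substitute --- transitivity of the stabilizer of $U$ on pairs of affine lines plus a dimension count --- is a plausible alternative route, but as written it is a plan rather than an argument: you do not verify the transitivity, you do not check that a null-chain tangent to $A$ at $O$ \emph{other than} $\exp(tA)/P$ itself still lies in the para-complex projectivization of $U$ and meets $\V_\pm$ in affine lines (this requires knowing the curves $\exp(Z)\exp(tA)/P$ for $Z\in\p_+$, which is precisely the computation above), and you do not carry out the count matching the $1$-parameter family of null-chains against the family of configurations modulo reparametrization. Until one of these is done, the statement ``all generic null-chains are of this form'' and its converse remain unproved.
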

\begin{proof}
The proof is analogous to the one of Proposition \ref{prop:model-chain}.
In particular, we analyze the situation in one point and one null-direction from the contact distribution.
Let $A=\spmat{0&0&0\\e_1&0&0\\0&e^n&0}\in\g_{-1}$ be a generic null-vector in $\g_{-1}$ and $c(t)=\exp(tA)/P$ be a null-chain passing through the origin in $G/P$ in the direction of $A/\p\in\g/\p$, i.e.
\begin{align*}
c(t)=\pmat{1&0&0\\te_1&\mbb{I}&0\\0&te^n&1} \big/ P.
\end{align*}

Under the interpretation \eqref{nchain-a}, the origin, the tangent vector and the null-chain itself corresponds to $O=\<e_0,e^{n+1}\>$,  $A = e^0\otimes (e_1/O) - e_{n+1}\otimes (e^n/O)\in O^*\otimes\V/O$ and 
\begin{align}
c(t)=\<e_0+t e_1,-t e^n+e^{n+1}\> ,
\label{eq-nchain-a}
\end{align}
respectively.
By the action of the subgroup $\exp\p_+ \subset P$ on the model null-chain \eqref{eq-nchain-a}, it follows that all null-chains passing through the origin with the same tangent vector as \eqref{eq-nchain-a} are given as 
\begin{align}
\hat c(t)=\< (1+at)e_0+te_1, -te^n+(1+bt)e^{n+1} \> , 
\label{eq-nnchain-a}
\end{align}
where $a,b\in\R$ are free parameters.
The null-chain \eqref{eq-nnchain-a} lies in the subspace
\begin{align}
U = O+\op{im}(A) = \<e_0,e_1,e^n,e^{n+1}\> ,
\label{eq-null}
\end{align}
which is evidently a null para-complex plane, 
and the intersections with $\pm1$-eigenspaces draw affine lines in $U_\pm=U\cap\V_\pm$.
The freedom controlled by $a,b\in\R$ (and potential projective reparametrizations) shows that any pair of affine lines in $U_\pm$ can be achieved this way.

Under the interpretation \eqref{nchain-b}, the null-chain \eqref{eq-nchain-a} corresponds to the curve of incident pairs
\begin{align}
p(t)=\<e_0+t e_{1}\>, \quad H(t)=\<\ker (-t e^n+e^{n+1})\>
\label{eq-nchain-b}
\end{align}
of points and hyperplanes in $\R\P^{n+1}$.
The points $p(t)$ form a line $\ell$ and the hyperplanes $H(t)$ have a common intersection $L$, namely, the projectivization of the subspace $\<e_0,\dots,e_{n-1}\>\in\R^{n+2}$.
Clearly, $\ell$ and $L$ are incident and $\dim L=n-1$.
Passing to a general null-chain \eqref{eq-nnchain-a} and using the previous justification, we see the argument is complete, i.e. any incidence configuration of the present type can be achieved this way.

The specification of null-chains of type $E$, respectively $F$, follows immediately by substituting globally 0 instead of $e^n$, respectively $e_1$.
\end{proof}

There are two free parameters $a,b\in\R$ in the expression \eqref{eq-nnchain-a}.
But, for $a=b$, this expression is just a (projective) reparametrization of \eqref{eq-nchain-a}.
Moreover, the pairs $(a,b)$ and $(a',b')$ determine the same curve up to an (affine) reparametrization if and only if $a'=ka$ and $b'=kb$, for some $k\in\R$.
Hence, indeed, we are in concordance with the above stated counts, i.e. there is a 1-parameter family of unparametrized generic null-chains passing through a given point in a given direction.

\smallskip

With the preceding model interpretations, we may give a simple criterion for local connectivity of points in the homogeneous model by chains and null-chains, respectively.
Let $M$ be the model LC manifold interpreted as in claim \eqref{LC-model-e} of Proposition \ref{prop-LC-model}, i.e. as the para-complex projectivization of the null-cone in $\R^{n+2,n+2}$.
From Lemma \ref{lem:TOM}, we know that, for any $L\in M$ and any transverse vector $w\in T_LM \setminus \D_L$, the corresponding subspace $\op{im}(w)+L$ of the ambient vector space $\R^{n+2,n+2}$ is a non-degenerate para-complex plane. 
From claim \eqref{chain-a} of Proposition \ref{prop:model-chain}, we know that such subspace determines a chain uniquely (as unparametrized curves), hence play a role of initial condition.
Similarly, for any  $L\in M$ and a generic null-vector from $w\in\D_L$, the corresponding subspace $\op{im}(w)+L$ is a null para-complex plane.
From claim \eqref{nchain-a} of Proposition \ref{prop:model-null-chain}, we know that such a subspace determines a family of generic null-chains.
In both cases, the curve is contained in the para-complex projectivization of the initial subspace.
Passing to the boundary condition instead, we get the following characterization:

\begin{corollary} \label{cor:chain-connect}
Let $M$ be the homogeneous model of LC structure interpreted as the para-complex projectivization of the null-cone in $\R^{n+2,n+2}$. 
Let $L_1$ and $L_2$ be two distinct para-complex null-lines in $\R^{n+2,n+2}$.
Then $L_1$ and $L_2$ can be connected by a chain, respectively a generic null-chain, if and only if the subspace $L_1+L_2\subset\R^{n+2,n+2}$ is a para-complex plane that is non-degenerate, respectively null.
\end{corollary}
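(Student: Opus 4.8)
The plan is to reduce the corollary to the model interpretations established in Propositions \ref{prop:model-chain} and \ref{prop:model-null-chain}, since these already identify chains and generic null-chains with para-complex projectivizations of intersections $\N\cap U$ for suitable para-complex planes $U$. So the essential content is: a chain (resp. generic null-chain) through $L_1$ and $L_2$ exists if and only if $L_1+L_2$ is a non-degenerate (resp. null) para-complex plane.

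First I would establish the forward direction. Suppose a chain passes through both $L_1$ and $L_2$. By Proposition \ref{prop:model-chain}\eqref{chain-a}, this chain is the para-complex projectivization of $\N\cap U$ for some non-degenerate para-complex plane $U$; in particular $L_1,L_2\subset U$, so $L_1+L_2\subseteq U$. Since $L_1\neq L_2$ and each $L_i$ is a para-complex line (real dimension $2$), and $U$ has real dimension $4$, the sum $L_1+L_2$ has real dimension $3$ or $4$. The key point is that it cannot be $3$: from the parametrization \eqref{eq-chain-a} of the model chain, the intersections with $\V_\pm$ are genuine (non-constant) affine lines, so $L_1\cap\V_+\neq L_2\cap\V_+$ and likewise on the minus side, which forces $L_1+L_2$ to meet each eigenspace in a $2$-dimensional subspace; hence $L_1+L_2=U$ is a non-degenerate para-complex plane. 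The same argument with Proposition \ref{prop:model-null-chain}\eqref{nchain-a} and the parametrization \eqref{eq-nnchain-a} handles generic null-chains: there $U$ is a null para-complex plane, and the two eigenspace-components are again non-constant affine lines, so $L_1+L_2=U$ is null.

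Next, the reverse direction. If $L_1+L_2$ is a non-degenerate para-complex plane $U$, then by Proposition \ref{prop:model-chain}\eqref{chain-a} the projectivization of $\N\cap U$ is a chain, and I must check it contains $L_1$ and $L_2$. Writing $L_i=\<(v_i)_+,(v_i)_-\>$, nullity of $L_i$ means $(v_i)_-\.(v_i)_+=0$, so each $L_i$ is a para-complex null-line lying in $U$, hence $L_i\in\N\cap U$ after projectivizing; this shows both endpoints lie on the chain. The null case is identical using Proposition \ref{prop:model-null-chain}\eqref{nchain-a}, noting that when $U$ is a null para-complex plane every para-complex line in $U$ is automatically null, and that $L_1,L_2$ furnish precisely a pair of affine lines (one in $U_+$, one in $U_-$) of the type appearing there — provided $U_\pm$ are genuinely $2$-dimensional, which is exactly what $L_1+L_2=U$ with $L_1\neq L_2$ guarantees.

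The main obstacle I anticipate is the bookkeeping in the forward direction: one must rule out the ``degenerate'' possibility that $L_1+L_2$ has real dimension $3$, which would correspond (via Lemma \ref{lem:TOM}) to a direction lying in $E$ or $F$ rather than transverse to (or generic within) $\D$. This is settled by observing that along a chain the two eigenspace-projections move nontrivially — visible directly from \eqref{eq-chain-a}, where the $\V_+$-component sweeps out $\<e_0,e_{n+1}\>$ and the $\V_-$-component sweeps out $\<e^0,e^{n+1}\>$ as $t$ varies — and similarly from \eqref{eq-nnchain-a} in the null case, so distinct points on the curve never share an eigenspace-component. A secondary subtlety is the parity/$\Z_2$-issue already handled in the Fefferman discussion, but it does not intervene here since we work with para-complex lines directly. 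Everything else is a direct translation through the dictionary of Section \ref{sec-LC-model}.
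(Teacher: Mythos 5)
Your argument is correct and follows essentially the same route as the paper, which obtains the corollary from Propositions \ref{prop:model-chain} and \ref{prop:model-null-chain} by passing from the initial-value description (via Lemma \ref{lem:TOM}) to the boundary-value one. The only difference is that you spell out the dimension count ruling out $\dim_\R(L_1+L_2)=3$ directly from the parametrizations \eqref{eq-chain-a} and \eqref{eq-nnchain-a}, a detail the paper leaves implicit.
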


In particular, given points cannot be connected by chain and null-chain simultaneously.
Generically, two points are connected by a chain, which is unique. 
Considering the connecting chain as a segment, there are actually two solutions, since chains in the homogeneous model are closed curves.
If given points are connected by a generic null-chain, then there is a 1-parameter family of such.
Taking into account also null-chains of type $E$ or $F$, one easily formulates an analogous characterization using part (a) of Lemma \ref{lem:TOM}.
As usual, this can be seen as a degenerate case of the generic null-chain situation.
This degeneracy implies that the connecting family of curves, provided it exists, collapses to a single curve.

\subsection{Canonical curves under Fefferman correspondence} \label{curves-correspondence}

Here we describe the correspondence between canonical curves under the Fefferman correspondence.
Primarily, we are interested in null-geodesics of the Fefferman space $\wt M$ which, as we show, project to chains, null-chains or points on the underlying LC manifold $M$.
By the very construction and the notion of development of curves, the full analysis can be done in the model situation, cf. Sections \ref{LC-feff-rough} and \ref{curves-prelim}.
This comprises an analysis of the correspondence between projections of 1-parameter subgroups (of particular types) of the principal groups, which translates directly to the general curved setting.
Instead of a formal description of 1-parameter subgroups in $G$ and $\wt G$ and their projections, we use our LC model interpretations from Section \ref{curves-model} and the standard conformal ones, respectively.

We utilize the setting from Section \ref{sec:Feff model}, where $\wt{M}$ and $M$ stays for the real and the para-complex projectivization, respectively, of the (open subset of the) null-cone $\mc N_0=\mc N\setminus(\V_+\cup \V_-)$.
The model projection $\pi:\wt{M}\to M$ is described as 
\begin{align}
\<v_++v_-\> \mapsto \<v_+,v_-\> ,
\label{eq-pi}
\end{align}
where $v=v_++v_-$ is the unique decomposition corresponding to $\V=\V_+\oplus\V_-$ and $v\in\mc N_0$, i.e. $v_\pm\ne 0$ and $v_+\.v_-=0$.
Note that this map is just $\<v\>\mapsto\<v,\K(v)\>$, where $\K$ is the para-complex structure on $\V$.
I.e., for any real null-line not contained in $\V_\pm$, the map $\pi$ returns its para-complex hull.
More generally, for a vector subspace $W\subset\V$, its para-complex hull is denoted by $\K(W)$.

The tangent map of the projection $\pi$ is described as follows.
On the one hand, for any $L\in\wt M$, the tangent space of $\wt M$ at $L$ is identified as $T_{L}\wt M \cong L^* \otimes L^\perp/L$.
On the other hand, for $L'=\pi(L)\in M$, the tangent space of $M$ at $L'$ is a subset $T_L'M\subset L'^*\otimes\V/L'$ specified in Lemma \ref{lem:TOM}.
Now, let $L=\<v_++v_-\>$ and let $w\in T_{L}\wt M$ be interpreted as a linear map $L\to L^\perp/L$ given by
\begin{align}
v_++v_- \mapsto (w_++w_-) / \<v_++v_-\> ,
\label{eq-w}
\end{align}
where $w_\pm\in L^\perp\cap\V_\pm$. 
Then $L'=\<v_+,v_-\>$ and the projection $w'=T\pi(w) \in T_L'M$, interpreted as a linear map $L'\to\V/L'$, is given as
\begin{align}
v_+ \mapsto w_+ / \<v_+,v_-\>, \quad v_- \mapsto w_- / \<v_+,v_-\> ,
\label{eq-w'}
\end{align}
Indeed, this is a para-complex map and the assumption $w_++w_-\in\<v_++v_-\>^\perp$ implies that the composition with $\V/L'\to\V/L'^\perp\cong L'^*$ is skew.
In these terms, the distinguished vector field \eqref{eq-K} spanning the vertical subbundle of the projection $\pi:\wt M\to M$ is as follows:
for $L=\<v_++v_-\>$ as above, the vector $\k_L\in T_L\wt M$ corresponds to the linear map $L\to L^\perp/L$ given by
\begin{align}
v_++v_- \mapsto (v_+-v_-) / \<v_++v_-\> .
\label{eq-KL}
\end{align}

From the rough description of the construction in Section \ref{LC-feff-rough}, we already know that the subbundle $\k^\perp\subset T\wt M$ projects to $\D\subset TM$.
In the current terms, the discussion can be refined as follows.
For any $L\in\wt M$ and $w\in T_L\wt M$, let  $W=\op{im}(w)+L$ be the auxiliary vector subspace in $\V$.
Let us denote the projected counterparts as $L'=\pi(L)\in M$, $w'=T\pi(w)\in T_LM$  and $W'=\op{im}(w')+L'\subset\V$.
Then, clearly, $W\subset W'$ and $W'=\K(W)$.
Further, it follows that $W\perp\k$ if and only if $W'$ is degenerate.
In addition, $W\perp\k$ and $W$ is null if and only if $W'$ is null.
This, together with Lemma \ref{lem:TOM}, provides an alternative control over the types of tangent vectors under the present correspondence.

Passing to the curves, we recall that \emph{null-geodesics} of a conformal parabolic geometry are the canonical curves of type $S\subset\wt\g_{-1}$, where $\wt\g_{-1}$ is the negative part of the conformal grading of $\wt\g=\mf{so}(n+2,n+2)$ and $S$ is the subset of null-vectors with respect to the standard inner product.
Null-geodesics of a conformal structure on $\wt{M}$, i.e. null-geodesics of the corresponding normal parabolic geometry, are the true geodesics of any metric from the conformal class.
In particular, as unparametrized curves, they are uniquely given by a tangent direction in one point.
In the homogeneous model they are simply the null straight lines, i.e. the real projectivizations of real null-planes $W\subset\V=\R^{n+2,n+2}$.
The projections of null-geodesics are thus contained in the para-complex hulls $W'=\K(W)$.
From the previous specification and Propositions \ref{prop:model-chain} and \ref{prop:model-null-chain}, we know that these are the subspaces in which chains and null-chains are contained, respectively.
If $W\not\perp\k$ then $W'$ is non-degenerate and its intersection with $\N$ determines a unique chain.
If $W\perp\k$ then $W'$ is null and there are many potentially matching null-chains.
A closer look leads to the following result.

\begin{prop}
\label{prop:fefferman correspondence}
Let $(\wt\G\to\wt{M},\wt\om)$ be the conformal parabolic geometry induced from a LC structure on $M$ by the Fefferman-type construction and let $\k$ be the null-vector field spanning  the vertical subbundle of $\wt{M}\to M$.
Then:
\begin{enumerate}[(a)]
\item Flow lines of $\k$ are null-geodesics in $\wt{M}$ that project to points in $M$.
\item Null-geodesics in $\wt{M}$ that are not perpendicular to $\k$ project to chains in $M$ and all chains are of this form.
\item Null-geodesics in $\wt{M}$ that are both perpendicular and transverse to $\k$ project to null-chains in $M$ and all null-chains are of this form.
\end{enumerate}
\end{prop}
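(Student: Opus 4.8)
The plan is to reduce everything to the homogeneous model, exploiting the fact that both null-geodesics in $\wt{M}$ and (null-)chains in $M$ are defined via developments into the respective homogeneous spaces, so it suffices to check that the model projection $\pi:\wt{M}\to M$ of \eqref{eq-pi} sends model null-geodesics (real projectivizations of real null-planes $W\subset\V$) to the model (null-)chains described in Propositions \ref{prop:model-chain} and \ref{prop:model-null-chain}, in a manner compatible with parametrizations. The point is that this model computation translates verbatim to the general curved setting through the notion of development: a null-geodesic in $\wt{M}$ is a canonical curve whose development into $\wt{G}/\wt{P}$ is $t\mapsto\exp(tX)/\wt{P}$ with $X\in\wt\g_{-1}$ null, and its image under $\pi$ develops into $G/P$ accordingly, because $\pi$ is covered by the bundle maps in \eqref{dia-feff-extension} which intertwine the Cartan connections $\wt\om$ and $\om$ via $\eta'$.

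The key steps are as follows. First I would pin down, for a fixed $L\in\wt{M}$ and a tangent vector $w\in T_L\wt{M}$ with auxiliary subspace $W=\op{im}(w)+L$, the three mutually exclusive cases via Lemma \ref{lem:TOM} applied after projecting: by the discussion just before the statement, $W\subset W'=\K(W)$ with $W\perp\k\iff W'$ degenerate, and ($W\perp\k$ and $W$ null) $\iff W'$ null. For (a): the flow lines of $\k$ are, by \eqref{eq-KL}, the curves $\<v_++v_-\>\mapsto\<(e^s v_+)+(e^{-s}v_-)\>$ sitting inside a null line $W=\<v_+,v_-\>^{\perp\text{-reduced}}$... more precisely $W$ is the real null-plane $\<v_+,v_-\>$ itself, which is $\K$-invariant and null, so $\K(W)=W$ has the form of case (a) in Lemma \ref{lem:TOM} only after one notices that $\pi$ collapses $W$ to a single para-complex null-line; hence the projection is constant, a point. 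Since $\k$ is null (Section \ref{LC-feff-rough}), these flow lines are null-geodesics — one checks the development is $t\mapsto\exp(tX)/\wt P$ with $X$ the image of the generator from the left column of \eqref{eq-P-Q}, which lies in $\wt\g_{-1}$ and is null. For (b): take $W$ null with $W\not\perp\k$, so $W'=\K(W)$ is a non-degenerate para-complex plane; by Proposition \ref{prop:model-chain}\eqref{chain-a} its intersection with $\N$ is a chain, and the image $\pi(\P(\N\cap W))$ is exactly that intersection, traced out as in \eqref{eq-chain-a}. Surjectivity (``all chains are of this form'') follows because every non-degenerate para-complex plane $W'$ contains real null-planes $W$ with $\K(W)=W'$ and $W\not\perp\k$. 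For (c): take $W$ null with $W\perp\k$ but $w$ transverse to $\k$ (i.e. $W\ne\k^\perp$-degenerate-collapse, equivalently $\dim W'=4$ or $3$ with $W'$ null); then $W'$ is a null para-complex plane and Proposition \ref{prop:model-null-chain}\eqref{nchain-a} identifies the intersections of $\N$ with $W'$ as the generic null-chains (or type $E$/$F$), and again $\pi$ realizes the projection.

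To handle parametrizations and the development argument cleanly, I would note that null-geodesics carry a projective family of parametrizations, as do chains and null-chains, and that $\pi$ being induced by $\eta'$ maps the distinguished parametrizations to distinguished ones; concretely, on the model, $\exp(tX)\wt O$ projects under \eqref{eq-pi} to a curve whose two components $v_\pm(t)$ are affine/projective exactly matching the descriptions \eqref{eq-chain-a}, \eqref{eq-nnchain-a}. For the curved setting, lift a null-geodesic $\bar{c}:I\to\wt{M}$ to $\hat{c}:I\to\wt\G$ with $\wt\om(\hat{c}')\equiv X\in\wt\g_{-1}$ null; restrict $\hat{c}$ to land in the subbundle $\G\subset\wt\G$ (possible after reparametrization when $X$ is not tangent to the $\k$-direction and $X$ decomposes appropriately under $\g\subset\wt\g$), so that $\om((\text{proj of }\hat{c})')$ is the $\g_-$-component of $X$, which lies in $\g_{-2}$ in case (b) and in the null cone of $\g_{-1}$ in case (c).

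The main obstacle I expect is the careful bookkeeping of \emph{which} real null-planes $W$ reduce, under $\pi$, to \emph{which} piece of a para-complex plane $W'$ — in particular verifying that a null-geodesic is never perpendicular-and-nontransverse to $\k$ while projecting to something other than a point, and conversely that the ``transverse to $\k$'' hypothesis in (c) is exactly what excludes case (a). This amounts to a linear-algebra case analysis on $4$- and $3$-dimensional subspaces of the split para-Hermitian space $\V$, organized by the Gram-matrix computation already carried out in the proof of Lemma \ref{lem:TOM}; the subtlety is that when $n$ is even the Fefferman fiber involves the extra $\Z_2$, so one must check the argument is insensitive to the component of $P/Q$, which it is because $\pi$ and $\k$ are defined independently of that choice (cf. the remark after \eqref{eq-P-Q}). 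Once this case analysis is in place, assembling (a), (b), (c) and their surjectivity clauses is routine, invoking Propositions \ref{prop:model-chain} and \ref{prop:model-null-chain} for the model statements and the development principle for the transfer to arbitrary $(\wt\G\to\wt{M},\wt\om)$.
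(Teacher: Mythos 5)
Your proposal is correct and follows essentially the same route as the paper: reduce to the homogeneous model via the development of curves, then use the ambient para-Hermitian linear algebra --- the correspondence $W\mapsto W'=\K(W)$ together with Lemma \ref{lem:TOM} and the model descriptions in Propositions \ref{prop:model-chain} and \ref{prop:model-null-chain} --- to match null-geodesics $\P(W)$ with their projected (null-)chains. The only difference is organizational: the paper fixes a tangent direction at the origin of $M$, enumerates its null lifts explicitly (the parameters $l$ and $k$), and computes the projected curves in coordinates, which in particular makes the surjectivity in case (c) (that the projections exhaust the whole 1-parameter family \eqref{eq-nnchain-a} of null-chains) explicit where your sketch leaves it as bookkeeping.
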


For the sake of simpler presentation, we do not distinguish generic null-chains and null-chains of type $E$, respectively $F$.
In the spirit of Proposition \ref{prop:model-null-chain}, the latter curves can always be seen as degenerate cases of the former ones, where the generic null-directions from $\D$ are substituted by those from $E$, respectively $F$.
The corresponding preimages in $T\wt{M}$ are obtained by a substitution of generic null-directions from $\k^\perp$ by those from $\wt{E}$, respectively $\wt{F}$; cf. the description after \eqref{eq-K}.

\begin{proof}
The general approach is as follows.
For a tangent direction at a point in $M$, we consider all its null-lifts to $\wt{M}$, all the corresponding null-geodesics and their projections back to $M$.
Then we show that the projected curves are chains, respectively null-chains, tangent to the initial direction.
The case (a) is, of course, exceptional.

It is enough to analyze this correspondence in the homogeneous model, which translates to the general case via the notion of development of curves.
We use the ambient vector descriptions as above.
Consistently with our previous notation, the origin in $M$ is the para-complex null-line $O=\<e_0,e^{n+1}\>$ in $\V$.
The corresponding fiber in $\wt M$ consists of real null-lines in $O$ that are not contained in $\V_\pm$; it can be described as 
\begin{align*}
\pi^{-1}(O)=\{ \<e_0+l e^{n+1}\>, \text{ where } l\in\R\setminus\{0\} \} .
\end{align*}

(a)
Clearly, the fiber $\pi^{-1}(O)$ is a null-geodesic in $\wt{M}$ and the vector field $\k$ is everywhere tangent to it, cf. the description in \eqref{eq-KL}.

(b) 
Let us take the tangent vector at $O$ of the model chain \eqref{eq-chain-a} and let  $L=\<e_0+l e^{n+1}\>$ be an arbitrary element from $\pi^{-1}(O)$.
This tangent vector lifts to a unique null-vector in $T_L\wt M$, namely, 
\begin{align}
e_0+l e^{n+1} \mapsto (e_{n+1}-l e^0) / \<e_0+l e^{n+1}\> ,
\label{eq-null-lift}
\end{align}
cf. the description in \eqref{eq-w}.
The null-geodesic through $L$ in the direction of \eqref{eq-null-lift} can be parametrized by $t\in\R$ as
\begin{align}
\< e_0+l e^{n+1} + t(e_{n+1}-l e^0) \>
=\< (e_0 + t e_{n+1}) +l (-t e^0 + e^{n+1}) \> .
\label{eq-null-geod}
\end{align}
For any $l\in\R$, i.e. for any initial point $L\in\pi^{-1}(O)$, 
the null-geodesic \eqref{eq-null-geod} projects to the chain \eqref{eq-chain-a}.
Hence the statement follows.

(c)
Let us take the tangent vector at $O$ of the model null-chain \eqref{eq-nchain-a} and let  $L=\<e_0+l e^{n+1}\>$ be as above.
Contrary to the previous case, the tangent vector does not lift uniquely to a null-vector in $T_L\wt M$; 
all such lifts are controlled by the parameter $k\in\R$ so that
\begin{align}
e_0+l e^{n+1} \mapsto \left( (e_1-le^n)+k(e_0-le^{n+1}) \right) / \<e_0+l e^{n+1}\> .
\label{eq-nulll-lift}
\end{align}
The null-geodesic through $L$ in the direction of \eqref{eq-nulll-lift} can be parametrized by $t\in\R$ as
\begin{align}
\< e_0+l e^{n+1} + t( (e_1-le^n)+k(e_0-le^{n+1}) ) \> 
= \< (1+kt)e_0+te_1 + l( -te^n+(1-kt)e^{n+1} ) \> .
\label{eq-nulll-geod}
\end{align}
These null-geodesics project to the family of curves
\begin{align}
\< (1+kt)e_0+te_1, -te^n+(1-kt)e^{n+1} \> ,
\label{eq-null-proj}
\end{align}
depending on $k\in\R$, but independent of $l\in\R$.
Comparing with \eqref{eq-nnchain-a}, curves \eqref{eq-null-proj} form the 1-parameter family of null-chains passing through the origin in the same direction as \eqref{eq-nchain-a}.
Hence the statement follows.
\end{proof}

Finally, for an integrable LC structure, our Fefferman-type construction preserves the normality condition, see Proposition \ref{prop-feff-normal}.
In such case, the null-geodesics of the induced conformal parabolic geometry are just null-geodesics of the underlying conformal structure.
Putting things together, we conclude with the following result.

\begin{theorem} \label{cor:integrable chains}
Let the conformal structure on $\wt{M}$ be induced from an integrable LC structure on $M$ by the Fefferman-type construction, and let $\k$ be the null conformal Killing field spanning the vertical subbundle of $\wt{M}\to M$.
Then chains, respectively null-chains, in $M$ are exactly the projections of null-geodesics in $\wt{M}$ that are not, respectively are, perpendicular to $\k$.
\end{theorem}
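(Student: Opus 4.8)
The statement is essentially a corollary of Proposition~\ref{prop:fefferman correspondence} together with Proposition~\ref{prop-feff-normal}, so the plan is short. First I would invoke Proposition~\ref{prop-feff-normal}: since the LC structure on $M$ is integrable, the associated normal parabolic geometry is torsion-free, hence the Fefferman-type construction produces a \emph{normal} conformal parabolic geometry $(\wt\G\to\wt M,\wt\om)$, and moreover the field $\k$ of \eqref{eq-K} is a nowhere-vanishing conformal Killing field --- this is exactly the hypothesis under which the phrase ``null conformal Killing field'' in the statement makes sense.

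Second, I would recall the general principle relating canonical curves to metric geodesics in the normal conformal case. Null-geodesics of a conformal parabolic geometry were defined as the canonical curves of type $S\subset\wt\g_{-1}$ consisting of the null-vectors for the standard inner product. For the \emph{normal} parabolic geometry underlying a conformal structure $[\wt g]$, these canonical curves coincide (as unparametrized curves) with the honest null-geodesics of any representative metric $\wt g\in[\wt g]$, cf.\ \cite{Cap2004} and \cite[Section~5.3]{Cap2009}. Since by the first step our $\wt\om$ is normal, the null-geodesics of the conformal parabolic geometry on $\wt M$ appearing in Proposition~\ref{prop:fefferman correspondence} are precisely the null-geodesics of the conformal structure $[\wt g]$ in the usual metric sense.

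Third, with this identification in hand the result is just a re-reading of Proposition~\ref{prop:fefferman correspondence}. Part~(b) says that null-geodesics of $\wt M$ that are not perpendicular to $\k$ project exactly onto the chains of $M$; part~(c) says that null-geodesics that are perpendicular to $\k$ and transverse to it project exactly onto the null-chains. Part~(a) accounts for the remaining null-geodesics perpendicular to $\k$, namely the flow lines of $\k$, which project to points; these we regard as the degenerate null-chains (alternatively, one excludes them by the transversality clause, as in the discussion following Proposition~\ref{prop:fefferman correspondence}). Assembling (a)--(c) gives precisely the dichotomy ``not perpendicular to $\k$ $\leftrightarrow$ chain'' versus ``perpendicular to $\k$ $\leftrightarrow$ null-chain'', as claimed.

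As for obstacles: there is essentially no new difficulty here --- all the geometric content was carried out at the level of the homogeneous model in Proposition~\ref{prop:fefferman correspondence} and transported to the curved setting via the development of curves, while the normality of $\wt\om$ is supplied by Proposition~\ref{prop-feff-normal}. If anything needs care, it is only the bookkeeping of the degenerate cases (flow lines of $\k$, and null-chains of type $E$ or $F$ versus the generic ones), handled exactly as in the remarks accompanying Proposition~\ref{prop:fefferman correspondence}.
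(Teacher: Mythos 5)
Your proposal is correct and follows essentially the same route as the paper, which likewise obtains the theorem by combining Proposition~\ref{prop-feff-normal} (normality of $\wt\om$ in the integrable case, so that the canonical null-geodesics of the induced parabolic geometry are the metric null-geodesics of $[\wt g]$) with the case analysis of Proposition~\ref{prop:fefferman correspondence}. Your explicit bookkeeping of the degenerate cases (flow lines of $\k$ and the type-$E$/$F$ null-chains) is if anything slightly more careful than the paper's own two-sentence derivation.
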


\subsection{Chains as Kropina geodesics} \label{sec-Kropina}

To have the correspondence from Theorem \ref{cor:integrable chains}, we assume the LC structure on $M$ is integrable.
Associated to the Fefferman metric on $\wt{M}$, we have Kropina metrics on $M$, which are metrics of pseudo-Finsler type defined off the contact distribution $\D\subset TM$.
Hence we get the description of chains as geodesics of a Kropina metric, which also yields further consequences for the geometry of chains.
This part is analogous to the CR case studied in \cite{Cheng2019}.

Let $\wt{g}$ be a representative Fefferman metric and $\k$ be the null conformal Killing field spanning the vertical subbundle of the projection $\pi:\wt{M}\to M$ as above.
Then, for any local section $\psi:U\subset M\rightarrow \widetilde{M}$ , the \emph{Kropina metric} on $U$ associated to $\wt{g}$ and $\psi$ is defined by 
\begin{align}\label{eq-Kropina}
F_{\psi}(\xi) := \frac{\wt{g}(\psi_*\xi,\psi_*\xi)}{\wt{g}(\k,\psi_*\xi)} ,
\end{align}
where $\xi\in TM\setminus\D$ and $\psi_*$ denotes the tangent map of $\psi$.
Indeed, this is well-defined only off the contact distribution, since $\k^\perp\subset T\wt{M}$ corresponds to $\D\subset TM$ under the projection.
The definition is independent of the choice of a particular metric from the conformal class of $\wt{g}$, but depends on the choice of section $\psi$.
However, a change of the local section only leads to a change of the Kropina metric by an exact 1-form, cf.  \cite[Section 2]{Cheng2019}. 
More precisely, let  $(s,x^i,u,p_i)$ be a preferred local coordinate on $\wt M$ as given in Theorem \ref{prop-Feff-integrable}, let $\psi':U\to\wt M$ be another section and let the section change be measured by the function $f$ defined on $U$ by $f=s\circ\psi'-s\circ\psi$. 
By Remark \ref{rem-feff}(ii), the vector field $\mathcal{K}=\partial_s$ is a null true Killing field for a representative metric $\wt{g}$. 
The vectors $\psi'_*\xi$ and $\psi_*\xi+df(\xi)\partial_s$ have the same norm with respect to $\wt{g}$, so putting it all together, the induced Kropina metrics are related by 
\begin{align*}
F_{\psi'}(\xi) 
=F_{\psi}(\xi) + \frac{2\wt{g}(\psi_*\xi,df(\xi)\partial_s)}{\wt{g}(\partial_s,\psi_*\xi)} 
=F_{\psi}(\xi) + 2df(\xi) .
\end{align*}


Similar to Riemannian geometry, the unparametrized geodesics of a Kropina metric $F$ are the stationary curves of the following $F$-length functional among all curves with the same endpoints:
\begin{align}
L(\gamma):=\int_a^b F(\gamma(t),\dot{\gamma}(t)) \d t,
\label{eq-L}
\end{align}
where $\gamma: [a,b]\rightarrow M$. Since Kropina metrics have the correct homogeneity degree with respect to $\dot\ga$, the functional \eqref{eq-L} is invariant under orientation preserving reparametrizations.
Unlike Riemannian geodesics, the geodesics of a Kropina metric do not have arbitrary directions but they are everywhere transverse to the distribution $\D\subset TM$.
Since Kropina metrics corresponding to different sections of the projection $\pi$ differ by an exact 1-form, their geodesics are the same.

The Euler--Lagrange equations corresponding to \eqref{eq-L} are given by
\begin{align}
\dfrac{\partial F}{\partial \gamma^{\alpha}} - \dfrac{\d}{\d t}\dfrac{\partial F}{\partial \dot{\gamma}^{\alpha}} = 0,
\label{eq-EL}
\end{align}
where the superscript $\al=1,\dots,n+1$ denotes coordinate components on $M$.
Overall, Kropina geodesics are solutions to the ODE system \eqref{eq-EL}, which are everywhere transverse to the contact distribution $\D$.

There is a local correspondence between the geodesics of the Kropina metric $F$ and the null-geodesics of the Fefferman metric $\wt{g}$ that are not perpendicular to $\k$.
This is presented a generalization of the so-called Fermat's principle in \cite[Theorem 2.1]{Cheng2019}, cf. also \cite[Theorem 7.8]{Erasmo}.
In this part, the fact that $\k$ is a null Killing field of $\wt{g}$ plays an important role.
Combining the current correspondence with Theorem \ref{cor:integrable chains}, we obtain the following result.

\begin{theorem} \label{prop-Kropina}
Let the conformal structure on $\wt{M}$ be induced from an integrable LC structure on $M$ by the Fefferman-type construction, and let $F$ be a locally defined Kropina metric as in \eqref{eq-Kropina}.
Then the chains in $M$ are precisely the geodesics of the Kropina metric $F$.
\end{theorem}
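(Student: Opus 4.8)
The plan is to chain together two results already available in the excerpt. The heart of the matter is Theorem~\ref{cor:integrable chains}, which identifies the chains in $M$ with the projections of precisely those null-geodesics of the Fefferman metric $\wt g$ on $\wt M$ that are not perpendicular to the null conformal Killing field $\k$. The second ingredient is the generalized Fermat principle stated just before the theorem (from \cite[Theorem 2.1]{Cheng2019}, cf. \cite[Theorem 7.8]{Erasmo}): for a section $\psi:U\to\wt M$, the unparametrized geodesics of the Kropina metric $F_\psi$ are exactly the projections under $\pi$ of the null-geodesics of $\wt g$ that are transverse to $\k$, i.e.\ not perpendicular to $\k$ (since $\k$ is null, $\wt g(\k,\cdot)\ne 0$ on a null vector is the same as transversality). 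So the proof is essentially: apply Fermat to pass from null-geodesics of $\wt g$ to Kropina geodesics of $F_\psi$, then apply Theorem~\ref{cor:integrable chains} to identify those same null-geodesics (up to the harmless fiber ambiguity) with chains.

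In more detail, first I would fix a local section $\psi:U\subset M\to\wt M$ and let $F=F_\psi$ be the associated Kropina metric as in \eqref{eq-Kropina}. By Proposition~\ref{prop-feff-normal}, since the LC structure is integrable, $\wt\om$ is normal, so the null-geodesics of the induced conformal parabolic geometry on $\wt M$ are the genuine null-geodesics of any metric in the conformal class, in particular of the representative $\wt g$; moreover $\k$ is a conformal Killing field, and for the representative metric of Remark~\ref{rem-feff}(ii) it is a true null Killing field. Then I would invoke the Fermat principle in the form cited: a curve $\gamma$ in $U$ is an unparametrized geodesic of $F$ if and only if it is the $\pi$-projection of a null-geodesic of $\wt g$ in $\pi^{-1}(U)$ which is transverse to $\k$ (the transversality being exactly the condition under which the Kropina ``length'' $\wt g(\psi_*\xi,\psi_*\xi)/\wt g(\k,\psi_*\xi)$ is defined and finite). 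Finally, Theorem~\ref{cor:integrable chains} says these projected curves are precisely the chains in $M$. Combining the two equivalences gives: $\gamma$ is a chain $\iff$ $\gamma$ is an $F$-geodesic, which is the claim.

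One point that needs a word of care, and which I would address explicitly, is the dependence on the choice of section $\psi$. The Kropina metric $F_\psi$ depends on $\psi$, but the computation already carried out in the text shows that changing $\psi$ to $\psi'$ changes $F_\psi$ to $F_{\psi'}=F_\psi+2\,df$ for a function $f$ on $U$, i.e.\ by an exact $1$-form; adding an exact $1$-form to a Finsler-type Lagrangian does not change the Euler--Lagrange equations \eqref{eq-EL}, hence not the unparametrized geodesics. So the statement ``the chains are the geodesics of $F$'' is meaningful for any locally defined Kropina metric in the family \eqref{eq-Kropina}, and the argument above is insensitive to the choice. Likewise, the projection $\pi$ has one-dimensional fibers tangent to $\k$, so distinct null-geodesics of $\wt g$ differing by the $\k$-flow project to the same curve downstairs, which is consistent with chains being determined just by a transverse direction at a point.

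The main obstacle, such as it is, is not in our argument but is bundled into the cited Fermat principle: one must know that ``stationary curves of the Kropina length functional \eqref{eq-L}'' coincide with ``$\pi$-projections of null-geodesics of $\wt g$ transverse to $\k$,'' and in particular that the transversality hypothesis in Theorem~\ref{cor:integrable chains} (not perpendicular to $\k$) matches the admissibility condition for Kropina geodesics (everywhere transverse to $\D$). I would verify this matching directly: a null-geodesic $\tilde\gamma$ of $\wt g$ is perpendicular to $\k$ at one point iff it is everywhere (since $\k$ is Killing, $\wt g(\k,\dot{\tilde\gamma})$ is constant along $\tilde\gamma$), and under $\pi$ the condition $\wt g(\k,\dot{\tilde\gamma})\ne 0$ translates, via $\k^\perp$ projecting onto $\D$, exactly to $\dot\gamma\notin\D$; so the two transversality conditions are literally the same. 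Granting the Fermat principle and this identification of hypotheses, the theorem follows by concatenating Theorem~\ref{cor:integrable chains} with \cite[Theorem 2.1]{Cheng2019}, with no further computation required.
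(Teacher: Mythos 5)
Your proposal is correct and follows essentially the same route as the paper, which obtains the theorem by combining the Fermat-principle correspondence of \cite[Theorem 2.1]{Cheng2019} (Kropina geodesics $\leftrightarrow$ projections of null-geodesics not perpendicular to $\k$) with Theorem~\ref{cor:integrable chains}. Your added checks --- independence of the section $\psi$ up to an exact $1$-form and the matching of the two transversality conditions via the constancy of $\wt g(\k,\dot{\tilde\gamma})$ along null-geodesics --- are exactly the points the paper addresses in the surrounding discussion.
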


This result provides a tool for deriving the system of ODEs for chains, which appears to be very efficient compared to other strategies.
An instance, for 3-dimensional LC structures, is presented in Section \ref{curves-dim3}.
Other applications of the current approach to chains follow.
They concern known results on the  determinacy of the LC structure by its chains or on the character of chains themselves, but with weakened assumptions: 
instead of considering the whole family chains, we are allowed to restrict only to a ``sufficiently big'' subset.

A family of (unparametrized) curves on $M$ is \emph{sufficiently big} if, for any $x\in M$, the set of tangent vectors at $x$ for these curves contains a nonempty open subset of $T_xM$. 
Two Kropina metrics are said to be \emph{projectively equivalent} if their geodesics coincide on a sufficiently big family of curves. 
By \cite[Theorem 1.2]{Cheng2019}, projectively equivalent Kropina metrics of the form $F_1=\frac{g_1}{\si_1}$ and $F_2=\frac{g_2}{\si_2}$ satisfy 
\begin{align}
\ker\si_1=\ker\si_2 \quad\text{and}\quad F_2=c F_1+\be, 
\label{eq-F1F2}
\end{align}
where $c$ is a constant and $\beta$ is a closed 1-form,
provided that $\ker\si_i$ are non-integrable distributions and $g_i$ are non-degenerate on $\ker\si_i$.
In particular, this holds for Kropina metrics \eqref{eq-Kropina} induced by integrable LC structures.
This is one of the key inputs for the following statement.

\begin{prop}
\label{prop:chain fix LC}
Two integrable LC structures share a sufficiently big family of chains if and only if the LC structures coincide.
\end{prop}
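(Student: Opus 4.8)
The plan is to reduce the statement to the rigidity of Kropina metrics recorded just before it, namely equation \eqref{eq-F1F2}. Suppose two integrable LC structures on $M$, call them $\D=E\oplus F$ and $\D'=E'\oplus F'$, share a sufficiently big family of chains. By Theorem \ref{cor:integrable chains} (or directly Theorem \ref{prop-Kropina}), the chains of each are the geodesics of an associated Kropina metric $F_1=\wt g_1/\si_1$ and $F_2=\wt g_2/\si_2$, built from the respective Fefferman data via \eqref{eq-Kropina}. The shared sufficiently big family of chains means $F_1$ and $F_2$ are projectively equivalent in the sense defined above. Since both LC structures are integrable, the distributions $\ker\si_i$ are the contact distributions (non-integrable) and the $\wt g_i$ restrict non-degenerately to them, so the hypotheses of \cite[Theorem 1.2]{Cheng2019} are met and we get $\ker\si_1=\ker\si_2$ together with $F_2=cF_1+\be$ for a constant $c$ and a closed $1$-form $\be$.

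Next I would extract geometric consequences. The equality $\ker\si_1=\ker\si_2$ says the two contact distributions coincide, $\D=\D'$; indeed by the construction \eqref{eq-Kropina}, $\ker\si_i$ is exactly the contact distribution of the $i$-th structure, being the locus where the Kropina metric degenerates (the set $TM\setminus\D$ is where $F_\psi$ is defined). So it remains to see that the two Legendrian splittings of the common $\D$ agree, possibly up to swapping $E\leftrightarrow F$. For this I would use $F_2=cF_1+\be$. Writing $F_i=\wt g_i/\si_i$ on the common $TM\setminus\D$, and choosing compatibly scaled contact forms so that $\si_1=\si_2=\si$, the relation becomes $\wt g_2=c\,\wt g_1+\si\odot\be$ on the complement of $\D$; since both sides are the leading (null-cone) data of Kropina metrics, comparing the fibrewise-quadratic pieces shows the split-signature conformal metrics $\th^i\odot\pi_i$ of the two structures on $\D$ agree up to scale. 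A conformal class of split metrics on $\D$ determines its two null distributions $E,F$ (the isotropic rank-$n$ subdistributions), hence $\{E,F\}=\{E',F'\}$ as an unordered pair. Thus the LC structures coincide (allowing the convention that an LC structure and its $E\leftrightarrow F$ swap are regarded as the same, or else noting that a chain-preserving identity map is then an isomorphism or anti-isomorphism, cf. \cite{vcap2009}). The converse implication is trivial: identical LC structures have identical Fefferman data, hence identical chains.

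The step I expect to be the main obstacle is the passage from the analytic conclusion $F_2=cF_1+\be$ back to the equality of the \emph{para-complex} (Legendrian) structures, not merely the contact distributions. The delicate point is that a Kropina metric $F=\wt g/\si$ remembers, off $\D$, the full quadratic form $\wt g$ and the $1$-form $\si$, and one must argue that the ambiguity $\be$ (a closed $1$-form, i.e. the exact-$1$-form freedom already seen in the section-change formula before \eqref{eq-L}) does not mix the $E$- and $F$-parts of $\wt g|_\D$. Concretely, $\wt g|_\D=\th^i\odot\pi_i$ pairs $E$ against $F$ and vanishes on each separately, so any perturbation by $\si\odot\be$ leaves $\wt g|_\D$ untouched; hence the isotropic cone of $\wt g_2|_\D$ equals that of $\wt g_1|_\D$ up to the scalar $c$, pinning down $E'\oplus F'=E\oplus F$ with the same (unordered) null-distribution decomposition. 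Making this clean requires being careful about the normalization of $\si$ (absorbing $c$ and the conformal factor) and about the behaviour of $\be$ along $\D$, but no heavy computation; the Fefferman-metric formula \eqref{eq-Feff-integrable}, in which $\wt g|_\D=\th^i\odot\pi_i$ is manifest, makes the bookkeeping transparent.
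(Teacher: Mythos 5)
Your first half coincides with the paper's proof: both arguments invoke Theorem \ref{prop-Kropina} to convert the shared sufficiently big family of chains into projective equivalence of the two Kropina metrics, and then apply \cite[Theorem 1.2]{Cheng2019} (recorded as \eqref{eq-F1F2}) to obtain $\ker\si_1=\ker\si_2$ and $F_2=cF_1+\be$. From there the paper takes a shorter route: since $c$ is constant and $\be$ is closed, $F_1$ and $F_2$ have the \emph{same} geodesics everywhere (not just on a sufficiently big set), hence the two structures share all chains, and \cite[Corollary 4.4]{vcap2009} finishes the argument. You instead try to read the Legendrian splitting directly off the quadratic part of the Kropina metric restricted to $\D$; this is a genuinely different, more self-contained ending, but as written it has a gap.

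The gap is the assertion that ``a conformal class of split metrics on $\D$ determines its two null distributions $E,F$ (the isotropic rank-$n$ subdistributions).'' This is false for $n\ge2$: a non-degenerate quadratic form of signature $(n,n)$ on a rank-$2n$ bundle has, pointwise, two whole connected families of maximal isotropic $n$-planes, not just two planes, so the conclusion $g_2|_\D = c\la\, g_1|_\D$ (with $\si_2=\la\si_1$) does not by itself single out $\{E,F\}$. What rescues the step is the Levi form, which your argument never uses: the restricted metric is $\Phi$ trivialized by the contact form, $g_i|_\D(X,Y)=\d\si_i(K_iX,Y)$ up to sign conventions, and since $\d\si_2|_\D=\la\,\d\si_1|_\D$ and $\d\si_1|_\D$ is non-degenerate, the relation $g_2|_\D=c\la\,g_1|_\D$ forces $K_2=cK_1$; then $K_2^2=\op{id}$ gives $c=\pm1$, i.e.\ $K_2=\pm K_1$. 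This recovers the splitting only up to the swap $E\leftrightarrow F$ --- the same indeterminacy (the identity acting as an anti-isomorphism) that is implicit in the paper's appeal to \cite{vcap2009}, and which you correctly flag. With that repair your ending goes through; without the Levi form, the passage from the conformal metric on $\D$ to the pair $(E,F)$ does not.
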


\begin{proof}
For the only non-trivial implication in the equivalence, 
let $\mathcal{D}_1=E_1\oplus F_1$ and $\mathcal{D}_2=E_2\oplus F_2$ be two integrable LC structures on $M$ with a same sufficiently big family of chains.
By this assumption and Theorem \ref{prop-Kropina}, there are locally defined Kropina metrics $F_1$ and $F_2$ that are projectively equivalent.
Thus, by \eqref{eq-F1F2}, the contact distributions $\D_1$ and $\D_2$ agree and Kropina geodesics of $F_1$ and $F_2$ coincide in general, not only on a sufficiently big set.
Then, by \cite[Corollary 4.4]{vcap2009}, it follows that the two LC structures coincide.
\end{proof}

Focusing on the chains themselves, they form a far more complicated system than geodesics of an affine connection, though they are both given by initial conditions of the same order.
More precisely, there is no affine connection which has all chains as its geodesics, see \cite[Theorem 4.1]{vcap2009}.
Again, with our current description in Theorem \ref{prop-Kropina}, we generalize this result as follows.

\begin{prop}
\label{prop:chain not affine}
There is no affine connection which has any sufficiently big family of chains among its geodesics.
\end{prop}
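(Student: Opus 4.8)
The plan is to reduce this statement to the previously established facts about Kropina metrics, namely Theorem \ref{prop-Kropina} together with the rigidity result \eqref{eq-F1F2} drawn from \cite[Theorem 1.2]{Cheng2019}. First I would argue by contradiction: suppose some affine connection $\nabla$ has a sufficiently big family of chains $\mathcal{C}$ among its (unparametrized) geodesics. By Theorem \ref{prop-Kropina}, the chains of the ambient integrable LC structure are precisely the geodesics of a locally defined Kropina metric $F$, so in particular every curve in $\mathcal{C}$ is simultaneously a $\nabla$-geodesic and an $F$-geodesic. The key point is that the geodesics of an affine connection through a given point realize \emph{every} tangent direction at that point, whereas the geodesics of a Kropina metric are everywhere transverse to the contact distribution $\D$ — they can never be tangent to $\D$. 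This is the source of the contradiction, but it needs to be extracted carefully because we are only assuming \emph{overlap} on a sufficiently big subset, not that all chains are $\nabla$-geodesics.

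The main step is therefore to promote ``agreement on a sufficiently big family'' to ``agreement everywhere'', exactly as in the proof of Proposition \ref{prop:chain fix LC}. I would proceed as follows. The $\nabla$-geodesics are themselves the geodesics of a canonical curve system — in fact one can encode $\nabla$ (or rather its projective class) and compare. More directly: fix $x \in M$; by hypothesis the tangent directions at $x$ of curves in $\mathcal{C}$ fill a nonempty open set $\mathcal{U}_x \subset T_xM$. For each direction $v \in \mathcal{U}_x$ there is exactly one $\nabla$-geodesic and exactly one chain tangent to $v$ at $x$ (chains being uniquely determined by a transverse direction, cf. the discussion after \eqref{eq-LC-g} and Proposition \ref{prop:model-chain}), and by assumption these coincide for $v \in \mathcal{U}_x$. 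Since $\mathcal{U}_x$ is open and both the $\nabla$-geodesic spray and the chain system depend smoothly (indeed, in a real-analytic fashion in suitable gauges) on the initial direction, the two second-order ODE systems governing these curves agree on an open set of initial conditions; by the rigidity packaged in \eqref{eq-F1F2} applied to the trivial pair $F_1 = F_2 = F$ — or, more cleanly, by directly invoking \cite[Theorem 4.1]{vcap2009} once we know the overlap is genuine — one concludes that \emph{all} chains would have to be $\nabla$-geodesics. But that is precisely the statement ruled out by \cite[Theorem 4.1]{vcap2009}: chains cannot be the geodesics of any affine connection. This contradiction completes the argument.

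An alternative, and perhaps cleaner, route avoids re-deriving the non-affineness: having shown that a sufficiently big family of chains being $\nabla$-geodesics forces \emph{all} chains to be $\nabla$-geodesics, one is immediately in the situation of \cite[Theorem 4.1]{vcap2009}. So the real content of the proof is the ``propagation'' step, and the two inputs making it work are (a) that chains are determined by a transverse tangent direction at a point, so agreement of directions on an open set gives agreement of curves locally, and (b) that a sufficiently big family, by definition, supplies an open set of such directions at every point, allowing the local conclusion to be globalized by connectedness.

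The hard part — or at least the step requiring the most care — will be the propagation argument, i.e.\ verifying that coincidence of the two curve families on a sufficiently big set really does force coincidence on all of them, rather than merely on some larger-but-still-proper subset. One must make precise in what sense the chain system and a $\nabla$-geodesic system are each governed by a (well-posed) system of ODEs whose solutions depend analytically, or at least openly, on initial data, so that an open coincidence set is in fact everything. Since chains are not geodesics of any connection, one cannot simply equate two geodesic sprays; instead one should work with the path geometry / projective structure of chains as in \cite{vcap2009} and compare it to the path geometry of $\nabla$-geodesics, using that a path geometry is determined by an open set of its paths. Once this is granted, the remainder is a direct appeal to \cite[Theorem 4.1]{vcap2009}.
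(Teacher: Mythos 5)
Your overall strategy is the same as the paper's: promote ``a sufficiently big family of chains are $\nabla$-geodesics'' to ``all chains are $\nabla$-geodesics'', then invoke the known impossibility of the latter (the paper cites \cite[Corollary 3.6]{Cheng2019}; \cite[Theorem 4.1]{vcap2009} would also do). However, the propagation step --- which you correctly single out as the crux --- is left with a genuine gap, and the tools you offer for it do not close it. Agreement of two second-order ODE systems on an open set of initial directions does \emph{not} in general force agreement on all directions: a path geometry is \emph{not} determined by an open set of its paths (one can alter a spray only on the complement of an open cone of directions), and smooth dependence on initial conditions gives nothing here. Your appeal to real-analyticity ``in suitable gauges'' is unsubstantiated --- the LC structure is only smooth --- and your suggestion to apply \eqref{eq-F1F2} to the pair $F_1=F_2=F$ is vacuous, while invoking \cite[Theorem 4.1]{vcap2009} ``once we know the overlap is genuine'' is circular at this stage.

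What actually makes the propagation work, and what the paper uses, is an algebraic structure in the \emph{velocity} variable rather than regularity in the initial data. Writing the Kropina geodesic equation as $\ddot{x}^k=-\Gamma^k_{ij}(x,\dot x)\dot x^i\dot x^j$ with formal Christoffel symbols as in \eqref{eqn:Finsler geodesic}, the special form $F=g/\sigma$ of \eqref{eq-Kropina} forces the right-hand side, for each fixed $x$, to be a \emph{rational} function of $\dot x$ with coefficients depending only on $x$; the reparametrized affine geodesic equation \eqref{eqn:std-geodesic} has a right-hand side polynomial in $\dot x$. Two such expressions agreeing on a nonempty open subset of directions (which is exactly what ``sufficiently big'' supplies at each point) must agree identically as rational functions, so all Kropina geodesics --- i.e.\ all chains, by Theorem \ref{prop-Kropina} --- would be $\nabla$-geodesics, and the contradiction follows. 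Without identifying this rational-versus-polynomial dichotomy (or some equally concrete substitute), your argument does not go through.
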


\begin{proof}
On the one hand, geodesics of a pseudo-Finsler metric are given by the equation
\begin{align}
\label{eqn:Finsler geodesic}
\ddot{x}^k =-\Gamma^k_{ij}(x,\dot{x})\dot{x}^i\dot{x}^j ,
\end{align}
where the coefficients $\Gamma^k_{ij}(x,\dot{x})$ are formal Christoffel symbols given by algebraic formulas of the first-order derivatives of the fundamental tensor of that metric, cf. \cite[Section 5.3]{BCS}. 
(The local coordinates here have nothing to do with the adapted coordinates we use elsewhere.)
Chains are geodesics of a Kropina metric whose special form \eqref{eq-Kropina} implies that, for each fixed $x$, the right hand side of \eqref{eqn:Finsler geodesic} is a rational function of $\dot{x}^i$ of constant coefficients.

On the other hand, geodesics of an affine connection are given by an analogous equation as in \eqref{eqn:Finsler geodesic}, but with the coefficients depending  only on $x$.
Allowing reparametrizations, we deal with the equation 
\begin{align}
\label{eqn:std-geodesic}
\ddot{x}^k =-\wh\Gamma^k_{ij}(x)\dot{x}^i\dot{x}^j +f(x)\dot{x}^k ,
\end{align}
for some function $f$, where $\wh\Gamma^k_{ij}$ are the standard Christoffel symbols of the affine connection.
Putting \eqref{eqn:Finsler geodesic} and \eqref{eqn:std-geodesic} together, we obtain, for each fixed $x$, a system of purely algebraic equations on $\dot{x}$ with constant coefficients. 
Thus, if a sufficiently big family of Kropina geodesics is among geodesics of an affine connection, then all Kropina geodesics are affine geodesics.
But, by \cite[Corollary 3.6]{Cheng2019}, this is impossible, hence the statement follows.
\end{proof}

\section{LC structures induced by projective} \label{LC-by-projective}

There is a distinguished class of LC structures that are induced by the projective ones.
This relation is clearly visible in the model description in Section \ref{sec-LC-model}. It is studied in full generality firstly in \cite{Takeuchi1994}.
Also, this correspondence allows a Cartan-geometric interpretation that fits nicely to our present approach.
Although LC structures arising this way are typically not integrable, it is still possible to treat some of topics discussed above in an adequate detail.
In particular, we present an explicit description of the Fefferman metric and the correspondence of canonical curves.
As an interesting side result, we obtain a characterization of such LC structures in terms of its defining functions.

\subsection{The correspondence} \label{LC-proj-prelim}

For a smooth manifold $\un{M}$, there is a canonical contact structure on the projectivized cotangent bundle $M:=\Proj T^*\un{M}$ of $\un{M}$.
From now on, we assume $\dim\un{M}=n+1$ and so $\dim M=2n+1$.
A projective structure on $\un{M}$, which is given by a family of torsion-free affine connections with the same unparametrized geodesics, gives rise to a LC structure on $M$ as follows. First note that the kernel of the tautological 1-form on $T^*\underline{M}$ descends to a contact distribution $\mathcal{D}$ on $M$.
Each connection from the projective class determines a horizontal distribution in $TT^*\un{M}$ complementary to the vertical distribution of the projection $T^*\un{M}\to\un{M}$.
This decomposition depends on the connection, however, the vertical distribution and the common intersection of horizontal distributions of connections within the projective class descend to a Legendrian decomposition of the contact distribution $\D\subset TM$, i.e. a half-integrable LC structure on $M$.
It follows that the LC structure is flat if and only if the initial projective structure is flat, cf. \cite{Takeuchi1994}.

Both projective and LC structures are parabolic geometric structures, whose model correspondence is described in Section \ref{sec-LC-model}.
In particular, let $G=SL(n+2,\R)$ be the principal Lie group and $P\subset\un{P}$ be the nested parabolic subgroups of $G$ as described  in Section \ref{sec-LC-model}.
Denoting the grading of the Lie algebra $\g=\mf{sl}(n+2,\R)$ corresponding to $\un{P}\subset G$ as $\g=\un\g_{-1}\oplus\un\g_0\oplus\un\g_1$, it is related to the one in \eqref{eq-LC-g} so that 
\begin{align*}
\un\g_{-1} = \g_{-2}\oplus\g_{-1}^E, \quad
\un\g_0 = \g_{-1}^F\oplus\g_0\oplus\g_1^F,\quad
\un\g_1 = \g_1^E\oplus\g_2.
\end{align*}
Again, the model fibration 
\begin{align*}
\xymatrix@R=1.4\baselineskip{
& G \ar[d] \ar@/_4mm/[ddl] \\
&  M\cong G/P \ar[dl] \\
\un{M}\cong G/\un{P} & \\
}
\end{align*}
translates directly to general curved settings, where the group $G$ with its Maurer--Cartan form is substituted by a bundle $\G$ with a Cartan connection $\om$.
Let $(\G\to\un{M},\om)$ be the normal parabolic geometry of type $(G,\un{P})$ associated to a projective structure on $\un{M}$. The correspondence space $M:=\G/P$ is identified with $\Proj T^*\un{M}$ and $(\G\to M,\om)$ is the normal parabolic geometry of type $(G,P)$ associated to the induced LC structure.
The vertical subbundle of the projection $M\to\un{M}$ is the Legendrian subdistribution $F$ of the contact distribution $\D\subset TM$.
From the concrete description of the LC harmonic curvature (and its horizontality), it follows that there is just one potentially nontrivial component which is fully determined by its projective counterpart. 
This refines significantly the above mentioned characterization of the flatness of the induced LC structure.
Note that, for $n>1$, the flatness is a necessary (not only sufficient) condition of the integrability of the induced LC structure.
Moreover, vanishing of the other harmonic curvature components locally characterizes the LC structures arising this way, 
see \cite{Cap2005} or \cite[Section 4.4.2]{Cap2009}.

Composing the current setup with the Fefferman-type construction from Section \ref{LC-feff-rough} 
yields a construction relating the projective structure on $\un{M}$ and a conformal structure on $\wt{M}$.
Proposition \ref{prop-feff-normal} then readily extends as follows:

\begin{prop}[{\cite[Proposition 3.8]{Hammerl2017}}] \label{prop-LC-proj-feff} 
Let $(\G\to\un{M},\om)$ be the normal parabolic geometry for an underlying projective structure on $\un{M}$. 
Let $(\G\to M,\om)$ be the intermediate LC parabolic geometry 
and let $(\wt\G\to\wt{M},\wt\om)$ be the conformal parabolic geometry obtained by the Fefferman construction.
Then $\wt\om$ is normal if and only if $\dim\un{M}=2$ or $\om$ is flat.
\end{prop}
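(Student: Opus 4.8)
The plan is to assemble the statement from results already available in this paper. By Proposition~\ref{prop-feff-normal}, the Fefferman construction turns the normal LC parabolic geometry on $M$ into a normal conformal geometry on $\wt M$ exactly when the underlying LC structure on $M$ is integrable (equivalently, when the LC Cartan connection is torsion-free). So the task reduces to deciding, in terms of the projective data on $\un M$, when the LC structure induced on the correspondence space $M=\Proj T^*\un M$ is integrable, and then matching the answer against the disjunction ``$\dim\un M=2$ or $\om$ flat''.

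If $\dim\un M=2$, then $\dim M=3$ and the induced LC structure is automatically integrable (Section~\ref{sec-LC-general}); hence the LC Cartan connection is torsion-free and, by Proposition~\ref{prop-feff-normal}, $\wt\om$ is normal. This covers the first disjunct, with no condition on $\om$; indeed, a projective surface may well have $\om$ non-flat, the obstruction reappearing as one of the two homogeneity-$4$ harmonic components of the $3$-dimensional LC structure, which is of Weyl rather than torsion type and hence irrelevant to normality of $\wt\om$.

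Now assume $\dim\un M=n+1\ge 3$, i.e. $n\ge 2$. If $\om$ is flat, then the projective structure is locally flat, hence so is the induced LC structure (Section~\ref{LC-proj-prelim}), in particular it is integrable, so $\wt\om$ is normal by Proposition~\ref{prop-feff-normal}. Conversely, if $\wt\om$ is normal, then the induced LC structure on $M$ is integrable; but for $n\ge 2$ integrability of an LC structure arising from a projective structure forces it to be flat (Section~\ref{LC-proj-prelim}), hence the projective structure is flat, hence $\om$ is flat. Combining the two cases, $\wt\om$ is normal if and only if $\dim\un M=2$ or $\om$ is flat.

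The step carrying the real content — and the one I regard as the main obstacle in a self-contained proof — is the claim used above for $n\ge 2$: \emph{integrability of a projectively-induced LC structure implies its flatness}. This is where the structure of the LC harmonic curvature on a correspondence space enters. Among the three harmonic components of an LC structure in dimension $\ge 5$ (the two homogeneity-$1$ torsions $\tau^E,\tau^F$ and the homogeneity-$2$ Weyl-type curvature), a projective correspondence space has at most one nonzero, and since the Legendrian subbundle $F$ is the vertical distribution of the (integrable) fibration $M\to\un M$ one has $\tau^F=0$ identically; the surviving component is then the $E$-torsion $\tau^E$, which is fully determined by its projective counterpart (the projective Weyl curvature), so that $\tau^E=0$ already forces the whole LC harmonic curvature, hence the whole Cartan curvature, to vanish. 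Establishing this requires tracking the refinement of the projective $|1|$-grading $\un\g=\un\g_{-1}\oplus\un\g_0\oplus\un\g_1$ into the LC contact grading \eqref{eq-LC-g}, exploiting the extra horizontality of the Cartan curvature of $\om$ over $\un M$, and comparing the relevant $H^2$ modules; for the present purposes it may simply be quoted from \cite{Cap2005} and \cite[Section~4.4.2]{Cap2009}.
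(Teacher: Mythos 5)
The paper offers no proof of this proposition at all --- it is imported wholesale from \cite[Proposition 3.8]{Hammerl2017} --- so there is nothing internal to compare against; what can be said is that your assembly is correct and consistent with everything the paper does state. You factor the claim through Proposition \ref{prop-feff-normal} ($\wt\om$ normal iff the LC structure on $M$ is integrable, applicable because the correspondence-space geometry $(\G\to M,\om)$ is itself the normal LC geometry, as asserted in Section \ref{LC-proj-prelim}) and then through the dichotomy that a projectively induced LC structure is integrable iff $\dim\un M=2$ or it is flat; both halves of that dichotomy appear in the paper (automatic integrability in dimension three in Section \ref{sec-LC-general}, and ``for $n>1$, the flatness is a necessary \ldots\ condition of the integrability'' in Section \ref{LC-proj-prelim}), and flatness of the LC geometry is flatness of the shared Cartan connection $\om$, so the two disjunctions match. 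Your diagnosis that the whole weight sits on the $n\ge 2$ implication ``integrable $\Rightarrow$ flat'' is accurate, and your sketch of it --- horizontality of the curvature over $\un M$ kills the $F$-torsion and the Weyl-type harmonic component, leaving only the $E$-torsion, which is a slot of the projective Weyl curvature and, as the sole surviving harmonic component, controls the entire curvature --- is precisely the standard correspondence-space mechanism of \cite{Cap2005}, \cite[Section 4.4.2]{Cap2009}. Since that step is only quoted (both by you and by the paper), your proof ultimately rests on the same external inputs as the paper's bare citation; its added value is that it makes the logical route from those inputs to the stated equivalence explicit.
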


\subsection{Explicit Fefferman metric} \label{LC-proj-feff}

Also for LC structures induced by general projective structures, there is an explicit coordinate description of the induced conformal Fefferman metric, expressed in terms of Christoffel symbols of a representative affine connection from the projective class.
In this context, we highlight that the Fefferman space $\wt{M}$ is identified with an appropriately weighted cotangent bundle of $\un{M}$ and the so-called \emph{Patterson--Walker metrics} are directly detected in the conformal class, see \cite[Section 6.1]{Hammerl2017}.
Assuming that a representative affine connection from the projective class is special, i.e. preserves a volume form on $\un{M}$, we identify $\wt{M}$ with $T^*\un{M}$. Then the corresponding Patterson--Walker metric on $\wt{M}$ is given by a natural pairing between the vertical and the horizontal distribution.
For an explicit description, let $(x^a)$ be a local coordinate on $\un{M}$ and $(y_b)$ be the canonical fiber coordinates on $T^*\un{M}$, i.e. so that the tautological 1-form has the form $y_a\d x^a$.
Here and below, the indices $a,b$ etc. run from 1 to $n+1$ and repeated indices indicate the sum.
For $\Ga^c_{ab}$ being the Christoffel symbols of a special torsion-free affine connection, the corresponding Patterson--Walker metric has the form 
\begin{align}
g = \d x^a \odot \d y_a - y_c\, \Ga^c_{ab} \d x^a \odot \d x^b .
\label{eq-Feff-PW}
\end{align}
Indeed, for a particular value of the weight (which is $w=2$ according to standard conventions), it follows that the projective change of connection,
\begin{align}
\wh\Ga^c_{ab} = \Ga^c_{ab}+\de^c_a\Up_b+\de^c_b\Up_a, 
\label{eq-proj-change}
\end{align}
where $\Up=\d f$ and $f$ is a nowhere vanishing function on $\un{M}$,
induces a conformal change of the metric, $\wh g=e^{wf} g$, see \cite[Section 3.3]{Hammerl2019}.

Besides the previous expression, we also examine the two-step process indicated in Proposition \ref{prop-LC-proj-feff}.
Let $(\G\to\un{M},\om)$ be the normal parabolic geometry associated to the projective structure on $\un{M}$ and let $\un\phi:\un{M}\to\G$ be a local section of the bundle projection.
The components of the corresponding gauge $\un\phi^*\om:T\un{M}\to\g$ are denoted by
\begin{align}
\begin{pmatrix}
\un\om^0_0 & \un\om^0_b  \\
\un\om^a_0 & \un\om^a_b 
\end{pmatrix} ,
\label{eq-gauge1}
\end{align}
which reflects the block decomposition of $\g=\mf{sl}(n+2,\R)$ corresponding to projective geometries discussed in Section \ref{LC-proj-prelim}.
One may always choose a local section so that 
\begin{align}
\un\om^a_0=\d x^a, \quad
\un\om^a_b=\Ga^a_{bc}\d x^c, \quad
\un\om^0_0=0 ,
\label{eq-gauge2}
\end{align}
where $(x^a)$ are local coordinates on $\un{M}$ and $\Ga^a_{bc}$ are Christoffel symbols of a special torsion-free affine connection from the projective class as above.
This, in particular, means that $\sum\limits_{a=1}^{n+1} \Ga^a_{ac}=0$.
In fact, the normality of $\om$ forces this setup to be completed by
$\un\om^0_b=-\frac1{n-1}R_{bc}\d x^c$, 
where $R_{ab}$ is the Ricci tensor of the representative connection, see e.g. \cite{Crampin2007}.

The typical fiber of the projection $M\to\un{M}$, which is $P/\un{P}$ by construction, is identified with the nilpotent subgroup $\exp\g_{-1}^F\subset P$.
We use the parametrization
\begin{align}
\exp 
\begin{pmatrix}
0 & 0 & 0 \\
0 & 0 & 0 \\
0 & p_j & 0
\end{pmatrix} ,
\label{eq-expF}
\end{align}
where we refer to the block decomposition \eqref{eq-LC-g} and  $\g_{-1}^F\cong\R^n$.
Composing the gauge section $\un\phi:\un{M}\to\G$ with the bundle projection $\G\to M$ gives a preferred section of the fibration $M\to\un{M}$.
Any element of $M$ is related to the image of this section by the action of \eqref{eq-expF}, which induces the fiber coordinates $(p_j)$.
It follows that, under the substitution $x^{n+1}=u$, this setting is compatible with the one from Section \ref{sec-LC-general}, which yields the following.

\begin{prop} \label{LC-proj-char}
Let $\D=E\oplus F$ be a half-integrable LC structure with integrable $F$-leaves on a manifold $M$ and let 
\begin{align}
\si=\d u -p_i \d x^i, \quad
\th^i=\d x^i, \quad
\pi_i=\d p_i -f_{ij}\d x^j
\label{eq-cofram}
\end{align}
be an adapted coframe, in local coordinates $(x^i,u,p_i)$, such that $E=\ker\<\si,\pi_i\>$ and $F=\ker\<\si,\th^i\>$.
Let $\un{M}$ be the local leaf space of the integrable distribution $F$, with coordinates $(x^i,u)$.
The LC structure on $M$ is induced by a projective structure on $\un{M}$ if and only if the functions $f_{ij}$ are cubic polynomials in $p_i$ of the form
\begin{align}
\begin{split}
f_{ij} 
= -\Ga^{n+1}_{i\,j} 
&+ p_k \Ga^k_{i\,j} - p_j\Ga^{n+1}_{n+1\,i} - p_i \Ga^{n+1}_{n+1\,j} + \\
&+ p_i p_k \Ga^k_{n+1\,j} + p_j p_k \Ga^k_{n+1\,i} - p_i p_j \Ga^{n+1}_{n+1\,n+1}
+ p_i p_j p_k \Ga^k_{n+1\,n+1} ,
\end{split}
\label{eq-fij}
\end{align}
where the functions $\Gamma^c_{ab}$, 
being the Christoffel symbols of a representative connection from the projective class,
depend on $(x^i,u)$ and satisfy $\Ga^c_{ab}=\Gamma^c_{ba}$ and $\sum\limits_{a=1}^{n+1} \Ga^a_{ac}=0$.
\end{prop}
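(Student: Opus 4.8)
The plan is to prove this in the same spirit as Theorem~\ref{prop-Feff-integrable}: take the normal Cartan connection of the projective structure on $\un M$ written in the gauge \eqref{eq-gauge2}, transport it to a Cartan gauge on the correspondence space $M$ along a suitable section, calibrate that gauge so that its $\g_-$-part becomes an adapted coframe as in \eqref{eq-gauge0}, and then read off $f_{ij}$ from the $(n+1,j)$-component, which by \eqref{eq-gauge0} must equal $\pi_j=\d p_j-f_{ij}\d x^i$.

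For the ``only if'' direction, suppose the LC structure on $M$ is induced by a projective structure on the $F$-leaf space $\un M$; recall that $F$ is precisely the vertical bundle of $M\to\un M$, so the identification of the leaf space with the base of the correspondence-space fibration is automatic. Coordinatize $\un M$ by $(x^i,u)$ with $x^{n+1}=u$ and pick, among the distinguished connections of the projective class, the (locally unique) representative $\nabla$ preserving the coordinate volume form $\d x^1\wedge\dots\wedge\d x^n\wedge\d u$; its Christoffel symbols then satisfy $\Ga^c_{ab}=\Ga^c_{ba}$ and $\sum_a\Ga^a_{ac}=0$, which is exactly what makes the normal projective gauge available in the form \eqref{eq-gauge2}. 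Following the construction recalled above, compose the preferred section $\un\phi$ of $\G\to\un M$ with $\G\to M$ to get a preferred section of $M\to\un M$; a general point over the chart is obtained from it by the right action of $\exp N(p)$, with $N(p)=\spmat{0&0&0\\0&0&0\\0&p_j&0}\in\g_{-1}^F$, and this is where the fibre coordinates $(p_j)$ enter. By the change-of-section formula \eqref{eq-gauge-change}, the resulting gauge on $M$ equals $\Ad_{\exp(-N(p))}(\Xi)$ plus the Maurer--Cartan term $\spmat{0&0&0\\0&0&0\\0&\d p_j&0}$, where $\Xi:=\un\phi^*\om$; since $N(p)$ is $2$-step nilpotent we simply have $\Ad_{\exp(-N(p))}(\Xi)=\Xi-N(p)\Xi+\Xi N(p)-N(p)\Xi N(p)$, a finite expression of degree at most $2$ in $p$.

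A direct block computation, using only $\un\om^a_0=\d x^a$ and $\un\om^a_b=\Ga^a_{bc}\d x^c$ from \eqref{eq-gauge2}, shows that after this step $\om^i_0=\d x^i=\th^i$ and $\om^{n+1}_0=\d u-p_k\d x^k=\si$ hold exactly, while $\om^{n+1}_j=\d p_j+A_{ji}\d x^i+B_j\d u$ with $A_{ji},B_j$ quadratic in the $p$'s, e.g. $B_j=\Ga^{n+1}_{j\,n+1}-p_k\Ga^k_{j\,n+1}+p_j\Ga^{n+1}_{n+1\,n+1}-p_jp_k\Ga^k_{n+1\,n+1}$. The spurious $\d u$-term prevents this gauge from being adapted, so I would apply one further transformation by an element of $\exp\g_1^E$ (of the form $\spmat{0&-B_i&0\\0&0&0\\0&0&0}$): conjugation by it adds $-B_j\si$ to $\om^{n+1}_j$, leaves $\om^i_0$ and $\om^{n+1}_0$ untouched, and its Maurer--Cartan contribution lands in $\g_1^E$, so the other $\g_-$-components are unaffected. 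Writing $\d u=\si+p_i\d x^i$, the $\d u$-terms now cancel and $\om^{n+1}_j=\d p_j+(A_{ji}+B_jp_i)\d x^i$, i.e. an adapted $\pi_j=\d p_j-f_{ij}\d x^i$; comparing coefficients and using the symmetry of $\Ga$ yields precisely \eqref{eq-fij}, the genuinely cubic term $p_ip_jp_k\Ga^k_{n+1\,n+1}$ coming exactly from the $B_jp_i$ contribution.

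Conversely, given $f_{ij}$ of the form \eqref{eq-fij} with $\Ga^c_{ab}$ symmetric and trace-free, let $\nabla$ be the torsion-free, volume-preserving affine connection on $\un M$ with those Christoffel symbols and $[\nabla]$ its projective class; on the correspondence space $\Proj T^*\un M$, with the coordinates $(x^i,u,p_j)$ coming from $\exp N(p)$ as above, the induced LC structure has defining functions given by \eqref{eq-fij} by the computation just made (read now as a chain of equalities). Hence it agrees with the given LC structure in these coordinates, and since an LC structure is completely determined by its defining functions via \eqref{eq-frame}, the given structure is locally induced by $[\nabla]$. The main obstacle, I expect, is neither implication on its own but the calibration in the middle: realizing that the naive section furnished by \eqref{eq-expF} is \emph{not} adapted, identifying the extra $\g_1^E$-valued gauge transformation that fixes this, and verifying that it only shifts $\om^{n+1}_j$ by a multiple of $\si$ so that the extracted $f_{ij}$ is unchanged. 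The remaining work — the adjoint action on the $3\times 3$ block matrices and matching coefficients — is routine precisely because $N(p)$ is $2$-step nilpotent, which is also the structural reason $f_{ij}$ emerges as a cubic polynomial in $p$.
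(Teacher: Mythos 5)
Your proposal is correct and follows essentially the same route as the paper: transform the projective gauge \eqref{eq-gauge2} by the fiber action \eqref{eq-expF} via \eqref{eq-gauge-change}, read $f_{ij}$ off the resulting $\g_-$-part, and reverse the computation for the converse. The one point where you are more explicit than the paper is the residual $\d u$-term in $\om^{n+1}_j$ (removed by a further $\exp\g_1^E$ adjustment, equivalently by substituting $\d u=\si+p_i\d x^i$, which is what produces the genuinely cubic term); the paper's proof passes over this silently, and your accounting of it checks out against \eqref{eq-fij}.
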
 

\begin{proof}
For a projective structure represented by $\Ga^c_{ab}$, let us consider the gauge \eqref{eq-gauge1} on $\un{M}$ with components as in \eqref{eq-gauge2}.
The components changes along the fibration $M\to\un{M}$ according to the transformation formula \eqref{eq-gauge-change} where we substitute \eqref{eq-expF}.
Focusing on the $\g_-$-part yields the adapted coframe \eqref{eq-gauge0} with $f_{ij}$ as in \eqref{eq-fij}.
Indeed, this expression is symmetric in $i$ and $j$ and invariant under the projective change of connection \eqref{eq-proj-change}.

Conversely, let a half-integrable LC structure be given by the functions $f_{ij}$ of the form \eqref{eq-fij} with $\Ga^c_{ab}$ depending only on $(x^i,u)$ and satisfying $\Ga^{c}_{ab}=\Ga^c_{ba}$ and $\sum\limits_{a=1}^{n+1}\Ga^a_{ac}=0$.
The coefficients of the polynomial form  a linear system of equations on $\Ga^c_{ab}$ which is solvable and fixes all $\Ga^c_{ab}$.
Interpreting $\Ga^c_{ab}$ as Christoffel symbols of an affine connection on $\un{M}$, we have a representative connection of the projective class, whose associated LC structure is the one we started with.
\end{proof}

\begin{remark} 
In the case when $n=1$, i.e. $\dim M=3$, the right-hand side of \eqref{eq-fij} becomes a cubic polynomial in one variable, so the associated PDE system $\eqref{eqn: LC PDE}$ becomes a single ODE of second order. In this case, Proposition \ref{LC-proj-char} becomes the famous characterization of geodesic equations by Cartan.
More details on this special situation are in Section \ref{sec-dim3}.
\end{remark}

Now, we extend the current LC structure by the Fefferman-type construction and introduce the fiber coordinate on the Fefferman space as in Section \ref{LC-feff-explicit}. It leads us to the following coordinate description of the Fefferman metric.

\begin{prop} \label{prop-Feff-proj}
Let a half-integrable LC structure on $M$ be induced by a projective structure on $\un{M}$.
Let $(x^a)$ be a local coordinate on $\un{M}$, $\Ga^c_{ab}$ be the Christoffel symbols of an affine connection from the projective class and $(x^i,u,p_i)$ be a local coordinates on $M$ as in Proposition \ref{LC-proj-char}, where $u=x^{n+1}$.
Let $[\wt{g}]$ be the induced conformal structure on the Fefferman space $\wt{M}$ and let $s$ be the induced fiber coordinate of the projection $\wt{M}\to M$.
Then a representative metric from the conformal class has the form
\begin{align}
\wt g = (\Gamma^{n+1}_{bc}-p_k\Gamma^k_{bc}) \d x^b \odot \d x^c +\d x^i \odot \d p_i +2(\d x^{n+1}-p_i\d x^i) \odot \d s .
\label{eq-Feff-proj}
\end{align}
Moreover, the formula above is related to the description in \eqref{eq-Feff-PW} by the coordinate transformation 
\begin{align}
p_i=-\frac{y_i}{y_{n+1}}, \quad
s=-\frac12\log|y_{n+1}| .
\label{eq-transf}
\end{align}
\end{prop}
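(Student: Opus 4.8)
The plan is to produce an explicit Cartan gauge for the LC structure on $M$, push it to the Fefferman space $\wt M$, and insert it into the representative metric \eqref{eq-feff-form}; the ``moreover'' is then settled by a direct substitution of \eqref{eq-transf} into \eqref{eq-Feff-PW}. The starting observation is that the argument in the proof of Theorem \ref{prop-Feff-integrable} uses only that the $\g_-$-part of the chosen gauge is an adapted coframe as in \eqref{eq-gauge0}: for any such gauge, \eqref{eq-feff-form} gives
\[
\wt g=\th^i\odot\pi_i+\si\odot\varpi,\qquad \varpi=\om^0_0+\om^{n+1}_{n+1}=-\sum_{i=1}^n\om^i_i ,
\]
and this holds whether or not the LC structure is integrable. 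Hence the whole task reduces to identifying the single $1$-form $\varpi$ for the gauge associated with a projective structure, and then simplifying using \eqref{eq-fij}.

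First I would take the projective gauge \eqref{eq-gauge1}--\eqref{eq-gauge2} on $\un M$ (together with $\un\om^0_b=-\tfrac1{n-1}R_{bc}\,\d x^c$), rewrite it in the finer block decomposition \eqref{eq-gauge} adapted to \eqref{eq-LC-g}, and change the gauge along $M\to\un M$ by the nilpotent element \eqref{eq-expF} via \eqref{eq-gauge-change}. Since \eqref{eq-expF} is the exponential of an element of $\g_{-1}^F$, the adjoint action is polynomial in $p_j$ and $p^*\om_P=p^{-1}\,\d p$ supplies the $\d p_j$-terms; by the computation already performed in the proof of Proposition \ref{LC-proj-char}, the $\g_-$-block of the new gauge is the adapted coframe $\si=\d u-p_i\d x^i$, $\th^i=\d x^i$, $\pi_i=\d p_i-f_{ij}\d x^j$ with $f_{ij}$ as in \eqref{eq-fij}, and reading off the diagonal of the same gauge change gives $\varpi_0:=\om^0_0+\om^{n+1}_{n+1}$ as an explicit combination of the $\Ga^c_{n+1\,b}\,\d x^b$ (the Ricci part $\un\om^0_b$ stays within the nilradical $\p_+$ under \eqref{eq-expF}, so it does not contribute to $\varpi_0$). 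Passing to $\wt M$ by the further change \eqref{eq-P-Q} with fiber parameter $s$ replaces $\varpi_0$ by $\varpi=\varpi_0+2\d s$. Substituting $\si,\th^i,\pi_i,\varpi$ into $\wt g=\th^i\odot\pi_i+\si\odot\varpi$, expanding $\th^i\odot\pi_i=\d x^i\odot\d p_i-f_{ij}\d x^i\odot\d x^j$, and inserting \eqref{eq-fij}, the $\d p$- and $\d s$-parts immediately give the last two summands of \eqref{eq-Feff-proj}, while all the $\d x\odot\d x$-terms have to collapse to $(\Ga^{n+1}_{bc}-p_k\Ga^k_{bc})\,\d x^b\odot\d x^c$. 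This final collapse — in which the cubic-in-$p$ polynomial \eqref{eq-fij} must cancel exactly against the quadratic-in-$p$ contribution of $\si\odot\varpi_0$ — is the one place that genuinely uses the explicit form \eqref{eq-fij}, and it is the step where care is needed.

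For the ``moreover'', substitute $y_i=-p_iy_{n+1}$ and $y_{n+1}=\pm e^{-2s}$ from \eqref{eq-transf} into \eqref{eq-Feff-PW}. From $\d y_{n+1}=-2y_{n+1}\,\d s$ and $\d y_i=-y_{n+1}\,\d p_i+2p_iy_{n+1}\,\d s$ one computes $\d x^a\odot\d y_a=-y_{n+1}\bigl(\d x^i\odot\d p_i+2(\d x^{n+1}-p_i\d x^i)\odot\d s\bigr)$ and $-y_c\Ga^c_{ab}\,\d x^a\odot\d x^b=-y_{n+1}(\Ga^{n+1}_{ab}-p_k\Ga^k_{ab})\,\d x^a\odot\d x^b$, so \eqref{eq-Feff-PW} equals $-y_{n+1}=\mp e^{-2s}$ times the right-hand side of \eqref{eq-Feff-proj}. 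Since $-y_{n+1}$ is nowhere zero this proves the claimed relation and re-confirms that \eqref{eq-Feff-proj} is a representative of the Fefferman conformal class; the two signs of $y_{n+1}$ correspond to the $\Z_2$-factor of $P/Q\cong\R\times\Z_2$, so \eqref{eq-transf} is a genuine local diffeomorphism realizing the identification of $T^*\un M$ (for a special connection in the projective class) with the preferred coordinate chart on $\wt M$ used in Theorem \ref{prop-Feff-integrable}.
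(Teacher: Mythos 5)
Your proposal is correct and follows essentially the same route as the paper: read off $\varpi=\om^0_0+\om^{n+1}_{n+1}$ from the gauge change by \eqref{eq-expF} (the same expansion as in Proposition \ref{LC-proj-char}), append $2\d s$ along the fiber, substitute into $\wt g=\th^i\odot\pi_i+\si\odot\varpi$, and check the coordinate change to \eqref{eq-Feff-PW} directly. The one computation you defer --- the collapse of the $\d x\odot\d x$-terms using \eqref{eq-fij} --- is also left as an ``easy manipulation'' in the paper, and your fully worked Patterson--Walker substitution in fact closes that gap independently, since it exhibits \eqref{eq-Feff-proj} as a nowhere-vanishing multiple of a known representative of the Fefferman conformal class.
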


\begin{proof}
The rough form of the Fefferman metric is the same as in \eqref{eq-Feff-integrable}.
Indeed, this is derived from \eqref{eq-feff-form} where the integrability does not enter.
From the same expansion as in the proof of Proposition \ref{LC-proj-char}, one reads the remaining term we need as
\begin{align}
\varpi =\om^0_0+\om^{n+1}_{n+1} 
= -(\Ga^{n+1}_{n+1\,c} -p_i \Ga^i_{n+1\,c}) \d x^c.
\label{eq-varpi-proj}
\end{align}

As at the end of proof of Theorem \ref{prop-Feff-integrable}, it easily follows that the previous expression transforms along the fibration $\wt M\to M$ as $\varpi\mapsto\varpi+2\d s$.
Including the substitutions \eqref{eq-cofram}, the formula \eqref{eq-Feff-proj} follows after an easy manipulation.
The relation to \eqref{eq-Feff-PW} via the coordinate transformation \eqref{eq-transf} is a direct computation.
\end{proof}

\subsection{Correspondence of curves} \label{LC-proj-curves}

Concerning the correspondence of canonical curves, we give comments on both steps of the previous construction.
Firstly, we recall that the geodesics of a projective parabolic geometry are canonical curves of type $S=\un\g_{-1}$, which is the negative part of the corresponding grading of $\g=\mf{sl}(n+2,\R)$ as in Section \ref{LC-proj-prelim}.
Geodesics of a projective structure on $\un{M}$ are, of course, the true geodesics of any affine connection from the projective class.
In the homogeneous model, projective geodesics are just straight lines, the 1-dimensional subspaces of the real projective space.

For the correspondence between the geodesics on $\un{M}$ and chains, respectively null-chains, of the induced LC structure on $M=\Proj T^*\un{M}$, we refer to the generalities in Sections \ref{curves-prelim} and \ref{LC-proj-prelim} and the model observations in Propositions \ref{prop:model-chain}, respectively \ref{prop:model-null-chain}.
 Part (b) of these propositions provides a link to the underlying projective geometry.
In the notation used there, the projection $M\to\un{M}$ is given by $(p,H)\mapsto p$.
Altogether, we easily conclude with

\begin{prop} \label{prop-curves-proj}
Let the LC structure on $M$ be induced by a projective structure on $\un{M}$ and let $F\subset TM$ be the vertical distribution of the projection $M\to\un{M}$.
Then both chains and null-chains in $M$ project to geodesics in $\un{M}$ and all geodesics on $\un{M}$ are of this form.
\end{prop}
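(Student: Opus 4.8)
The plan is to reduce both assertions to the homogeneous model and then read them off from the explicit descriptions of chains and null-chains in Propositions~\ref{prop:model-chain} and \ref{prop:model-null-chain}, using that under the interpretation~\eqref{LC-model-b} of Proposition~\ref{prop-LC-model} the fibration $M\to\un M$ is the incidence projection $(p,H)\mapsto p$.

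First I would record why it suffices to argue in the model. The normal parabolic geometries of type $(G,\un P)$ on $\un M$ and of type $(G,P)$ on $M=\G/P$ share the same Cartan bundle $\G$ and Cartan connection $\om$, and the fibration $M\to\un M$ is induced by the inclusion $P\subset\un P$, i.e.\ by the $G$-equivariant projection $G/P\to G/\un P$. I would point out that the associated ``osculating'' bundles and their induced connections are therefore compatible, so that for any curve $c$ in $M$ with projection $\gamma=p\o c$ in $\un M$, a development of $c$ into $G/P$ projects, under $G/P\to G/\un P\cong\R\P^{n+1}$, to a development of $\gamma$. Since development induces a bijection on germs of curves respecting the order of contact (cf.\ Section~\ref{curves-prelim}), it is then enough to verify both halves of the statement in the model, where the geodesics of the flat projective structure on $G/\un P\cong\R\P^{n+1}$ are exactly the projective lines.

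Next I would run the model computation. By Proposition~\ref{prop:model-chain}\eqref{chain-b} a chain is the set of incident pairs $(p,H)$ with $p$ ranging over a line $\ell$ and $H$ over the hyperplanes containing a fixed codimension-$2$ subspace $L$ disjoint from $\ell$; its image under $(p,H)\mapsto p$ is the line $\ell$, which is a geodesic. Conversely, any line $\ell\subset\R\P^{n+1}$ admits, since $n+1\ge 2$, a codimension-$2$ subspace $L$ disjoint from it, and the associated chain projects onto $\ell$, so every geodesic arises. The identical reasoning applied to Proposition~\ref{prop:model-null-chain}\eqref{nchain-b} covers generic null-chains and null-chains of type $E$: the only change is that $\ell$ must be incident with $L$, which can again be arranged for any prescribed $\ell$, while the incidence projection still has image $\ell$. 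Transporting these conclusions back to the curved setting via the compatibility of developments would then finish the proof.

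The one point that will need a word of care, rather than a real obstacle, is the degenerate case of null-chains of type $F$. As noted in Section~\ref{LC-proj-prelim}, the vertical distribution of $M\to\un M$ is precisely the Legendrian subdistribution $F$, so null-chains of type $F$, being tangent to $F$, are contained in the fibers of the projection and therefore map to single points of $\un M$; in the model this is exactly the degeneration of Proposition~\ref{prop:model-null-chain}\eqref{nchain-b} in which all points $p\in\ell$ coincide. I would treat these constant curves as trivial geodesics (or simply exclude them from the statement); all remaining chains and null-chains then project onto honest projective geodesics, and every geodesic on $\un M$ is obtained in this way.
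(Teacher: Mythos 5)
Your proposal is correct and follows essentially the same route as the paper: the paper likewise reduces to the homogeneous model via the shared Cartan bundle and the development of curves (Sections \ref{curves-prelim} and \ref{LC-proj-prelim}), reads off the projection from part (b) of Propositions \ref{prop:model-chain} and \ref{prop:model-null-chain} using the incidence map $(p,H)\mapsto p$, and notes separately that null-chains of type $F$ project to points. Your additional care about the existence of a suitable $L$ for a prescribed line $\ell$ (for the surjectivity onto geodesics) is a welcome elaboration of what the paper leaves implicit.
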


Note that null-chains of type $F$ project to points.

Passing to the Fefferman space, we are concerned about the correspondence between conformal null-geodesics in $\wt{M}$ and LC chains, respectively null-chains, in $M$ as in Section \ref{curves-correspondence}.
We still have the general description from Proposition \ref{prop:fefferman correspondence}.
Spotting the correspondence of curves for underlying geometric structures, a quick answer is bounded by the normality issues summarized in Proposition \ref{prop-LC-proj-feff}.
Hence, for LC structures induced by projective, the correspondence  described in Theorems \ref{cor:integrable chains} and \ref{prop-Kropina} holds in the lowest dimensional or in the flat case.

However, even in general case, we have some control over the correspondence of curves due to the specificity of the induced conformal Cartan connection.
Although it is generally not normal, its origin guarantees it is ``half-normal'' which allows a satisfactory control over the normalization process, cf. \cite{Hammerl2017}.

\begin{prop} 
Let the LC structure on $M$ be induced by a projective structure.
Let $\wt{M}$ be the induced Fefferman conformal manifold, let $\k$ be the null conformal Killing field spanning the vertical distribution of $\wt{M}\to M$ and let $\wt{W}$ be the conformal Weyl curvature.
Then a conformal null-geodesic $c:I\to\wt{M}$ projects to a chain or a null-chain in $M$ if and only if
$$
\wt{W}\left(\k,c'(t)\right)\left(c'(t)\right) =0, 
\quad \text{for all $t\in I$} , 
$$ 
where $c'(t)=\frac{d}{dt}c(t)$ and $\wt{W}$ is seen as a 2-form on $\wt{M}$ with values in $\op{End}(T\wt{M})$.
\end{prop}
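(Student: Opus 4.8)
The plan is to compare two \emph{a priori} different families of distinguished curves on $\wt M$ and to pin down the obstruction between them. By Proposition~\ref{prop:fefferman correspondence} — whose proof is carried out in the homogeneous model and transported by the development of curves, and therefore uses no normality assumption — the curves in $\wt M$ projecting to chains, null-chains or points in $M$ are exactly the canonical curves of type $S$ (null directions in $\wt\g_{-1}$) of the Fefferman-type Cartan connection $\wt\om=\eta'\o\om$ \emph{itself}. On the other hand, the conformal null-geodesics in the statement are, by definition, the canonical curves of type $S$ of the \emph{normalization} $\wt\om^{\mathrm{nor}}$ of $\wt\om$, i.e.\ the genuine null-geodesics of a representative metric. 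When the LC structure is integrable the two families coincide (Theorem~\ref{cor:integrable chains}); for LC structures induced by projective ones this fails in general (Proposition~\ref{prop-LC-proj-feff}), and measuring the failure is exactly the content of the proposition. So the task is: decide when a conformal null-geodesic is also a canonical curve of type $S$ of $\wt\om$.

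The second step is to invoke the half-normality of $\wt\om$ established in \cite{Hammerl2017}: although $\wt\om$ is in general not normal, it is close enough to $\wt\om^{\mathrm{nor}}$ that the difference $\Psi:=\wt\om-\wt\om^{\mathrm{nor}}$ is a horizontal, $\wt P$-equivariant one-form whose values and whose dependence on the curvature of $\wt\om$ are completely determined by the half-normality; concretely $\Psi$ is obtained from the non-normal part of the curvature by the Kostant homotopy operator, and the only piece of the curvature entering it is ``concentrated in the $\k$-direction'', reflecting that $\k$ is the distinguished infinitesimal symmetry of the Fefferman construction. In particular $\Psi$ is built from the harmonic curvature of the induced conformal structure, which here is (the lift of) the conformal Weyl tensor $\wt W$.

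Now fix a conformal null-geodesic $c:I\to\wt M$, with null direction $c'$, together with a lift $\bar c:I\to\wt\G$ such that $\wt\om^{\mathrm{nor}}(\dot{\bar c})=X\in S$ is constant; then $\wt\om(\dot{\bar c})=X+\Psi(\dot{\bar c})$, and $\Psi(\dot{\bar c})$ depends only on $c'(t)$ by horizontality. Since $c$ and the $\wt\om$-canonical curve of type $S$ through the same point in the same null direction have the same $1$-jet, $c$ is itself such a canonical curve precisely when their $2$-jets agree; tracking the grading exactly as in the discussion of admissible reparametrizations and lifts after Definition~\ref{def-curves}, this $2$-jet discrepancy is a fixed natural expression in $\Psi(\dot{\bar c})$ and the direction $c'$. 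Substituting the explicit $\Psi$ of \cite{Hammerl2017} into it and using that $c'$ is null collapses everything to a nonzero multiple of $\wt W(\k,c')(c')$. Hence $c$ projects to a chain, a null-chain, or — when $c'$ is tangent to $\k$, in which case $\wt W(\k,c')(c')=0$ automatically by antisymmetry and $c$ is a reparametrized flow line of $\k$ — a point, if and only if $\wt W(\k,c'(t))(c'(t))=0$ for all $t\in I$. The degenerate null-chains of type $E$ and $F$ are treated, as throughout, as limiting cases of the generic null-chains.

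The hard part will be the last identification. One must unwind the half-normality statement of \cite{Hammerl2017} — the precise dependence of the deformation one-form $\Psi$ on the curvature through the Kostant codifferential and homotopy bookkeeping — carefully enough to see that exactly the contraction $\wt W(\k,\cdot)$ survives in the $2$-jet discrepancy, and that this single quantity is the \emph{complete} obstruction, with no further contribution at higher order (which is legitimate because canonical curves of type $S$ are determined by data of order at most two). A secondary, routine point is that the projected curve should carry the correct projective family of parametrizations, so that it is genuinely a chain or a null-chain rather than merely a curve with the same image; but this is already built into Proposition~\ref{prop:fefferman correspondence}.
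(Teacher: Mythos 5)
Your proposal follows the same overall strategy as the paper: projections of the distinguished null curves of the \emph{induced} (non-normal) Cartan connection are understood via Proposition \ref{prop:fefferman correspondence}, the conformal null-geodesics in the statement are those of the \emph{normalized} connection, and the obstruction to a curve being distinguished for both is governed by the difference of the two connections, which by \cite[Theorem 5.7]{Hammerl2017} is the contraction of $\k$ into the normal Cartan curvature. Where you diverge --- and where your argument is not yet a proof --- is precisely at the step you flag as ``the hard part'': you propose to compare $2$-jets of canonical curves of the two Cartan connections directly and merely assert that the discrepancy collapses to a nonzero multiple of $\wt{W}(\k,c')(c')$. The paper closes this gap with a concrete device: it descends to the associated Weyl connections $\wt\nabla^{ind}$ and $\wt\nabla^{nor}$ in a common scale, for which the difference tensor is $\xi\mapsto\wt{W}(\k,\xi)$ up to a nonzero factor (the $\wt\g_0$-part of the contracted curvature being the Weyl tensor). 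Since null-geodesics of either Cartan geometry are ordinary geodesics of the corresponding affine connection, the question reduces to the classical criterion for two affine connections to share an unparametrized geodesic, namely that the difference tensor evaluated on $(c',c')$ be proportional to $c'$; the symmetries of the Weyl tensor then force the proportionality factor to vanish, yielding $\wt{W}(\k,c')(c')=0$. You should either adopt this reduction or actually carry out the jet computation you defer; as written, the pivotal identification is assumed rather than proved. Your separate treatment of flow lines of $\k$ (which satisfy the condition but project to points) is consistent with how the paper implicitly handles that degenerate case.
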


\begin{proof}
Let $\wt{\om}^{ind}$ and $\wt\om^{nor}$ be the induced and the normalized conformal Cartan connection, respectively, for the Fefferman-type construction.
Their difference is interpreted as a 1-form on $\wt{M}$ with values in the Lie algebra $\wt\g$.
In our present situation, this 1-form is given by the contraction of $\k$ and the normal Cartan curvature, see \cite[Theorem 5.7]{Hammerl2017}.
Let $\wt\nabla^{ind}$ and $\wt\nabla^{ind}$ be the related Weyl connections, i.e. compatible affine connections corresponding to the same scale.
Their difference then corresponds to a 1-form on $\wt{M}$ with values in $\op{End}(T\wt{M})$ which is given by the contraction of $\k$ and the Weyl curvature $\wt{W}$.
I.e., for any $\xi\in T\wt{M}$, the difference $\wt{\nabla}^{ind}_\xi-\wt{\nabla}^{nor}_\xi$ equals to $\wt{W}(\k,\xi)$ up to a nonzero multiple.

Null-geodesics of the induced and the normalized Cartan geometry on $\wt{M}$ are the geodesics of the affine connection $\wt{\nabla}^{ind}$ and $\wt{\nabla}^{nor}$, respectively.
According to the previous comparison, an unparametrized geodesic $c$ of $\wt\nabla^{ind}$ is a geodesic of $\wt\nabla^{nor}$ if and only if $\wt{W}(\k,c')(c')$ is proportional to $c'$. 
By the symmetries of the Weyl tensor, the latter condition means that $\wt{W}(\k,c')(c')=0$.
Projections of null-geodesics of the induced connection are understood in Proposition \ref{prop:fefferman correspondence}, hence the statement follows.
\end{proof}

Let us recall that the tested condition is automatically satisfied in the lowest dimensional or in the flat case.
The types of target curves are controlled by relations of source curves to the vector field $\k$ as in Proposition  \ref{prop:fefferman correspondence}.

\section{LC structures in dimension three} \label{sec-dim3}

LC structures on 3-dimensional manifolds, which correspond to $n=1$ in our previous notation, are indeed special.
The contact distribution $\D\subset TM$ has rank 2 and the components of the Legendrian decomposition $E\oplus F=\D$ have rank 1.
In particular, 3-dimensional LC structures are automatically integrable.
We use the adapted local coordinates as in Section \ref{sec-LC-general}, although we write $y$ instead of $u$.
The decomposition \eqref{eq-frame} then takes the form
$E = \left\< \parder{}{x} +p\parder{}{y} +f\parder{}{p} \right\>$ and
$F = \left\< \parder{}{p} \right\>$,
the adapted coframe \eqref{eq-coframe} is 
$\si=\d y -p \d x$, $\th=\d x$, $\pi=\d p -f\d x$,
where  $f=f(x,y,p)$ is the defining function.

\subsection{Subordinate path geometry}

Forming a local leaf space $\un{M}$ of the distribution $F$ (or, dually, of $E$), $M$ is identified with $\Proj T^*\un{M}$ so that $\D\subset TM$ corresponds to the standard contact distribution on $\Proj T^*\un{M}$.
Also, in this dimension,  $\Proj T^*\un{M}$ is canonically isomorphic to $\Proj T\un{M}$ and the LC structure on $M$ determines the so-called \emph{path structure} on $\un{M}$.
This is  a system of paths, i.e. unparametrized curves, on $\un{M}$ with the property that through each point in each direction there passes exactly one path from the system.
The relation is so that the paths on $\un{M}$ are just the projections of integral curves of the distribution $E$ (or, dually, of $F$).
In adapted local coordinates as above, the paths correspond to solutions of the 2nd-order ODE
\begin{align}\label{eq-ODE}
\ddot{y}=f(x,y,\dot{y}) ,
\end{align}
where $y=y(x)$ and $\dot{y}=\frac{dy}{dx}$,
cf. equation \eqref{eqn: LC PDE}.

The geometry of 2nd-order ODEs is a classical subject studied by Lie, Tresse, and others. 
There are two fundamental invariants, expressed in terms of the function $f$ and its partial derivatives, whose joint vanishing is equivalent to the triviality of the equation under point transformations.
It is well known that these two invariants correspond to the two harmonic curvature components of the associated LC structure.
The simpler one is just $f_{pppp}$, where the subscripts denote the partial derivatives with respect to the third variable of $f$.
The vanishing of this invariant means that the equation \eqref{eq-ODE} has the form 
\begin{align}\label{eq-ODE-proj}
\ddot{y}=A_0 +A_1 \dot{y} +A_2 \dot{y}^2 +A_3 \dot{y}^3 ,
\end{align}
i.e. the right-hand side is a cubic polynomial in $\dot{y}$, whose coefficients $A_i$ are functions of $x$ and $y$.
The previous restriction is equivalent to the fact that the equation is {geodesic}, i.e. its solutions are geodesics of an affine connection on $\un{M}$.
In terms of Section \ref{LC-by-projective}, this just means that the LC structure on $M$ is induced by a projective structure on $\un{M}$.
In such case, the path structure on $\un{M}$ is called projective.
A relation to the Christoffel symbols of a representative affine connection is as follows
\begin{align}
\label{ODE-coeff}
A_0=-\Gamma^2_{11}, \quad
A_1=\Gamma^1_{11}-2\Gamma^2_{12}, \quad
A_2=2\Gamma^1_{12}-\Gamma^2_{22}, \quad
A_3=\Gamma^1_{22} .
\end{align}
Most of these observations appear in Cartan's projective papers, cf. \cite{Cartan1924}, \cite{Cartan1937}.
The relation \eqref{ODE-coeff} is to be read as a special case of our general characterization in Proposition \ref{LC-proj-char}.

\subsection{Canonical curves} \label{curves-dim3}

For a general 3-dimensional LC structure on $M$, the harmonic curvature components have rather high homogeneity degree.
This allows us to express all components of the normal Cartan connection with much less effort than in general dimension; a full derivation can be found in \cite{Doubrov2016}.
With this equipment, many problems can be readily treated.
The typical instance related to our article concerns deducing systems of ODEs whose solutions are canonical curves in $M$ of given type.
We do not develop this approach here, see \cite{Doubrov2005} for a general strategy and examples.

Specializing to chains and null-chains, there is an alternative way which employs the Fefferman metric on the associated 4-dimensional Fefferman space $\wt{M}$.
This is due to the correspondence with null-geodesics in $\wt{M}$ as described in Theorem \ref{cor:integrable chains}.
With the notation as above, the coordinate expression of the metric from Theorem \ref{prop-Feff-integrable} is
\begin{align}
\wt{g} = \d x\odot(\d p-f\d x) + (\d y-p\d x)\odot \frac13 \Big( -\frac12 f_{pp}(\d y-p\d x) -2 f_p\d x + 2\d s\Big) .
\label{eq-feff-4}
\end{align}
This expression coincides with \cite[formula (31)]{Nurowski2003} and \cite[formula (25)]{Bor2022}, where it is derived by different means.
The system of ODEs for chains derived this way can be found in \cite[Proposition 3.6]{Bor2022}.

Specializing only to chains, we have yet another strategy employing the Kropina metric. 
More concretely, let $c:I\to M$ be a curve that is everywhere transverse to the contact distribution $\D\subset TM$.
Let the curve be parametrized by the first coordinate $x$, i.e. $c(x)=(x,y(x),p(x))$ where $\dot{y}(x)-p(x)\ne 0$, for all $x\in I$.
Let us consider the Kropina metric $F$ defined as in \eqref{eq-Kropina} corresponding to the local section of the Fefferman projection $\wt{M}\to M$ given by $s=0$.
This evaluated on the tangent vector field of $c$ gives
\begin{align}
F(\dot{c}) = (\dot{y}-p)^{-1} \Big( \dot{p}-f -\frac23 f_p (\dot{y}-p) -\frac16 f_{pp}(\dot{y}-p)^2 \Big) .
\end{align}
The Euler--Lagrange equations of this functional are
\begin{align}\label{eq-EL-p}
\frac{\del F}{\del p}-\frac{\d}{\d x}\frac{\del F}{\del \dot{p}} 
&=(\dot{y}-p)^{-2} \Big(\ddot{y} -f -f_p(\dot{y}-p) -\frac12 f_{pp}(\dot{y}-p)^2 -\frac16 f_{ppp}(\dot{y}-p)^3 \Big) =0, \\
\begin{split}
\frac{\del F}{\del y}-\frac{\d}{\d x}\frac{\del F}{\del \dot{y}} 
&= (\dot{y}-p)^{-3}\, \Big( -2(\ddot{y}-\dot{p})(\dot{p}-f) +(\ddot{p}-\dot{f})(\dot{y}-p) -f_y(\dot{y}-p)^2 + \\ 
& \hskip8em + \frac16f_{ppx}(\dot{y}-p)^3 -\frac23f_{py}(\dot{y}-p)^3 -\frac16f_{ppy}(\dot{y}-p)^4 \Big) =0 .
\label{eq-EL-y}
\end{split}
\end{align}
By Theorem \ref{prop-Kropina}, this is the system of ODEs whose solutions are precisely the chains.
Compared with other approaches, we consider the present one the most straightforward.

An interesting side issue of these observations is the following characterization of LC structures induced by projective, i.e. characterization of projective path structures.
Rewriting \eqref{eq-EL-p} as 
\begin{align}
\ddot{y}=f +f_p(\dot{y}-p) +\frac12 f_{pp}(\dot{y}-p)^2 +\frac16 f_{ppp}(\dot{y}-p)^3,
\label{eq-ODE-p}
\end{align}
the right-hand side can be interpreted as the third-order Taylor expansion of $f=f(x,y,p)$ at $p=\dot{y}$, for $x$ and $y$ fixed. 
Thus, the right-hand side equals to $f$ if and only if $f$ is a cubic polynomial in $p$, i.e. the projected equation has the form \eqref{eq-ODE-proj}. 
This leads to the following criterion, originally proved in \cite{Bor2022}:

\begin{prop}[{\cite[Theorem 1]{Bor2022}}]
\label{prop: Bor-Wilse}
Let the LC structure on $M=\Proj T\un{M}$ correspond to a 2-dimensional path structure on $\un{M}$.
Then the path structure is projective if and only if all chains in $M$ project to the paths in $\un{M}$.
\end{prop}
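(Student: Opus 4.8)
The plan is to derive both implications from the explicit chain equation already obtained in this section, namely \eqref{eq-EL-p} in the rewritten form \eqref{eq-ODE-p}, together with the defining ODE \eqref{eq-ODE} of the path structure and the elementary dictionary that the path structure is projective exactly when the defining function $f$ is a cubic polynomial in $p$ (the content of \eqref{eq-ODE-proj}--\eqref{ODE-coeff}, a special case of Proposition \ref{LC-proj-char}). I will use repeatedly that a chain is transverse to $\D$, hence a fortiori to the vertical subdistribution $F\subset\D$ of the projection $M\to\un{M}$; thus the image of a chain in $\un{M}$ is always an immersed curve, and on the dense open subset of the chain where it is moreover transverse to the coordinate hypersurfaces $x=\mathrm{const}$ one has the $x$-parametrization $c(x)=(x,y(x),p(x))$ with $\dot y(x)-p(x)\neq0$, in which \eqref{eq-EL-p} is written. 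Since a path is uniquely determined by a point and a direction, it will suffice to test the path equation on this dense subset.

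For the forward implication I would argue as follows. If the path structure is projective, then the LC structure on $M$ is induced by a projective structure on $\un{M}$, so Proposition \ref{prop-curves-proj} applies and every chain projects to a geodesic of that projective structure; but the geodesics of a projective structure are precisely the paths of the associated (projective) path structure. Equivalently, and more in the spirit of this section: projectivity means $f$ is cubic in $p$, so the right-hand side of \eqref{eq-ODE-p} --- being the degree-$3$ Taylor polynomial of $t\mapsto f(x,y,t)$ about $t=p$, evaluated at $t=\dot y$ --- equals $f(x,y,\dot y)$, whence \eqref{eq-ODE-p} collapses to $\ddot y=f(x,y,\dot y)$, which is \eqref{eq-ODE}; so the projected curve solves the path ODE and is a path.

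For the converse I would fix an arbitrary point $(x_0,y_0,p_0)\in M$ and exploit that through it, in each transverse direction, there passes a unique chain. Parametrizing by $x$, for every $\dot y_0\neq p_0$ (the transverse component $\dot p_0$ does not enter \eqref{eq-ODE-p} or \eqref{eq-ODE}) there is a chain with $\dot y(x_0)=\dot y_0$; for it, \eqref{eq-ODE-p} evaluated at $x_0$ expresses $\ddot y(x_0)$ as the value at $t=\dot y_0$ of the degree-$3$ Taylor polynomial of $t\mapsto f(x_0,y_0,t)$ about $t=p_0$, while the hypothesis that its projection is a path forces $\ddot y(x_0)=f(x_0,y_0,\dot y_0)$ by \eqref{eq-ODE}. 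Equating these two identities and letting $\dot y_0$ vary, I would conclude that $t\mapsto f(x_0,y_0,t)$ coincides on an interval with its own cubic Taylor polynomial at $p_0$; differentiating four times in $t$ then gives $f_{pppp}(x_0,y_0,\cdot)\equiv0$, and since $(x_0,y_0,p_0)$ was arbitrary, $f_{pppp}\equiv0$ on $M$, i.e.\ $f$ is cubic in $p$ and the path structure is projective.

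Essentially no step poses a real obstacle, since the heavy lifting --- producing the chain equation \eqref{eq-EL-p} as an Euler--Lagrange equation of the Kropina functional (Theorem \ref{prop-Kropina}) --- is already done, and the reduction ``$f$ cubic in $p$ $\Leftrightarrow$ projective'' is classical. The one point I would be careful about is interpretive: making precise that ``all chains project to the paths'' means ``the image of every chain is a path'', and checking that the $x$-parametrization used in \eqref{eq-EL-p} is available on a dense open subset of each chain (which holds because chains are nowhere tangent to $F$), so that testing the path equation there suffices.
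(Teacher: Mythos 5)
Your proof is correct and follows essentially the same route as the paper: both hinge on rewriting the chain equation \eqref{eq-EL-p} as \eqref{eq-ODE-p} and recognizing its right-hand side as the cubic Taylor polynomial of $f$ in $p$, so that it reduces to the path ODE \eqref{eq-ODE} precisely when $f$ is cubic in $p$, i.e.\ the path structure is projective. You merely spell out the two implications (and the quantifier over initial slopes $\dot y_0$ in the converse) more explicitly than the paper does.
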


On the one hand, for a fixed initial condition $(x_0,y_0,\dot{y}_0)$ on $\un{M}$, there is a unique path, say $\un{c}$, obeying this condition.
On the other hand, this initial condition lifts to a 2-parameter family of initial conditions $(x_0,y_0,p_0,\dot{y}_0,\dot{p}_0)$ on $M$, each of which determines a unique chain provided that $\dot{y}_0-p_0\ne 0$, i.e. the initial tangent vector is transverse to $\D\subset TM$.
This family of chains projects to a family of curves in $\un{M}$ obeying the initial condition for $\un{c}$. 
Generally, the behavior is such that the curves in the family are distinct and only converge to the path $\un{c}$ as $p_0\to\dot{y}_0$, cf. \eqref{eq-ODE-p}.
It is the special feature of projective path structures that all these curves coincide.

Note that the limit curve in $M$ corresponding to $p_0=\dot{y}_0$ is just an integral curve of the distribution $E$.
In our previous terminology, this is just a null-chain of type $E$.

\subsection{Canonical families of paths}
Since $M=\Proj T\un{M}$, any curve $c:I\to M$ is a 1-parameter family of tangent directions along its point-projection $\un{c}:I\to\un{M}$.
Interpreting each direction as an initial condition, we get a 1-parameter family of paths in $\un{M}$.
The curve $c$ is tangent, respectively transverse, to the contact distribution $\D\subset TM$ if and only if the projected curve $\un{c}$ is tangent, respectively transverse, to the paths just described.

Canonical curves in $M$ determine canonical 1-parameter families of paths in $\un{M}$.
Such families provide a complex (and visually appealing) picture of the path structure.
Focusing on chains in the homogeneous model, we know from claim \eqref{chain-b} of Proposition \ref{prop:model-chain} that all paths from the corresponding family have a common intersection point, cf. \cite[figure 2]{Bor2022}.
It is not \emph{a priori} clear to which extent this property generalizes.
Experiments with non-trivial examples expose various kinds of behavior.
In the following example we show that the property does not hold automatically even for projective path structures.

\begin{example} \label{ex}
Let the LC structure be given by the defining function 
$f(x,y,p)=\frac12(p+e^{-2x}p^3)$, 
i.e. the corresponding ODE \eqref{eq-ODE} has the form 
\begin{align}
\ddot{y} = \frac{1}{2} \left( \dot{y}+e^{-2x}\dot{y}^3 \right).
\label{eq-ODE-ex}
\end{align}
This determines a projective path structure with relatively many symmetries, cf. \cite[Section 2.6]{bryantsolution}.
The general, respectively the singular, solutions to the equation \eqref{eq-ODE-ex} are
\begin{align*}
y(x)= \pm 2\, C_1^{-1} \sqrt{C_1 e^x+1}+C_2, \quad
y(x)=\pm e^x+C_2, \quad y(x)=C_2,
\end{align*}
where $C_1\ne 0$ and $C_2$ are arbitrary constants. 
In particular, the constant function $y(x)=0$ is a solution, i.e. the $x$-coordinate axis is a path.
We will analyze a particular chain projecting to this path.

Substituting the current setting into the Euler--Lagrange system for chains, the equation \eqref{eq-EL-p} is satisfied automatically (since our path structure is projective) and the equation \eqref{eq-EL-y} reduces to 
\begin{align*}
\ddot{p}p-2\dot{p}^2+\frac12\dot{p}p=0.
\end{align*}
With the transversality condition, the general solution to this equation is
\begin{align}
p(x)=\frac{\sqrt{e^x}}{D_1 \sqrt{e^x}+D_2},
\label{eq-p}
\end{align}
where $D_1$ and $D_2$ are arbitrary constants. 
Particular solutions to \eqref{eq-ODE-ex} that are transverse to the path $y(x)=0$ are
\begin{align}
\gamma_0(x)=e^x-1,\quad \gamma_1(x)=2e\sqrt{-e^{x-1}+1}.
\label{eq-gam}
\end{align}
The chain determined by the corresponding boundary condition is given by \eqref{eq-p} with
$D_1=\frac{-1}{\sqrt{e}-1}$ and $D_2=\frac{\sqrt{e}}{\sqrt{e}-1}$,
i.e. by
\begin{align}
p(x) = \frac{\sqrt{e^{x+1}}-\sqrt{e^x}}{\sqrt{e}-\sqrt{e^x}}.
\label{eq-ch}
\end{align}
Now, the paths \eqref{eq-gam} have a unique intersection, namely, 
\begin{align}
x=\ln\left(2\sqrt{e(2e-1)}-(2e-1)\right), \quad
y=2\sqrt{e(2e-1)}-2e.
\label{eq-int}
\end{align}
It is a matter of direct computation that the path determined by an arbitrarily chosen initial condition \eqref{eq-ch} along $y(x)=0$ does not pass through the point \eqref{eq-int}.
\end{example}


\end{document}